\newcommand{\N}{\mathbb{N}}
\newcommand{\Z}{\mathbb{Z}}
\newcommand{\mS}{\mathcal{S}}
\newcommand{\lS}{\leq_{\mS}}
\theoremstyle{plain}
\newtheorem{theorem}{Theorem}[section]
\newtheorem{lemma}[theorem]{Lemma}
\newtheorem{coro}[theorem]{Corollary}
\newtheorem{prop}[theorem]{Proposition}
\newtheorem{rema}[theorem]{Remark}
\newtheorem{exam}[theorem]{Example}
 \newtheorem{conjecture}[theorem]{Conjecture}
\title[Chomp on numerical semigroups]{Chomp on numerical semigroups}
\author[I. Garc\'{i}a-Marco]{Ignacio Garc\'{i}a-Marco}
\address{Facultad de Ciencias, Universidad de La Laguna. 38200 La Laguna, Spain.}
\email{iggarcia@ull.es}
\author[K. Knauer]{Kolja Knauer}
\address{Aix Marseille Univ, Universit\'e de Toulon, CNRS, LIS, Marseille, France.}
\email{kolja.knauer@lis-lab.fr}
\keywords{Numerical semigroup, Ap\'{e}ry set, Frobenius number, Cohen-Macaulay type, Chomp, poset game}
\begin{document}
\begin{abstract}
 We consider the two-player game chomp on posets associated to numerical semigroups and show that the analysis of strategies for chomp is strongly related to classical properties of semigroups.
We characterize which player has a winning-strategy for symmetric semigroups, semigroups of maximal embedding dimension and several families of  numerical  semigroups generated by arithmetic sequences. Furthermore, we show that which player wins on a given numerical semigroup is a decidable question. Finally, we extend several of our results to the more general setting of subsemigroups of $\N \times T$, where $T$ is a finite abelian group.
\end{abstract}

\maketitle

\section{Introduction}\label{introduction}

Let $P$ be a partially ordered set with a global minimum $0$. In the \emph{game of chomp} on $P$ (also known as \emph{poset game}), two players $A$ and $B$ alternatingly pick an element of $P$. Whoever is forced to pick $0$ loses the game. A move consists of picking an element $x \in P$ and removing its {\it up-set}, that is,
all the elements that are larger or equal to $x$. The type of questions one wants to answer is: for a given $P$, has either of the players a winning strategy? And, if yes, can a strategy be devised explicitly? A crucial, easy, and well-known observation with respect to these questions is the following one.

\begin{rema}\label{ganaAmax} If $P$ has a global maximum $1$ and $1 \neq 0$ (that is, $P \neq \{0\})$, then player $A$ has a winning strategy. This can be proved with an easy strategy stealing argument.
Indeed, if $A$ starting with $1$ cannot be extended to a winning strategy, there is a devastating reply $x \in P$. But in this case, $A$ wins starting with $x$. 
\end{rema}

One of the most well-known and probably oldest games that is an instance of chomp is Nim~\cite{Bouton}, where $P$ consists of a disjoint union of chains plus a global minimum. The first formulation in terms of posets is due to Schuh~\cite{Schuh}, where the poset is that of all divisors of a fixed number $N$, with $x$ below $y$ when $y|x$. A popular special case of this is the chocolate-bar-game introduced by Gale~\cite{Gale}. The \emph{$a \times b$-chocolate-bar-game} coincides with chomp on $P := \{(n,m) \mid 0 \leq n < a$, $0 \leq m < b\}$ partially 
ordered by $(n,m) \leq (n',m')$ if and only if $n \leq n'$ and $m \leq m'$. Since $(P,\leq)$ has $(a-1,b-1)$ as global maximum, the strategy stealing argument of Remark~\ref{ganaAmax} yields that player $A$ has a winning strategy for chomp on $P$. Explicit strategies for player $A$ are known only in very particular cases including 
when $a = b$ or $a \leq 2$. The problem remains open even in the $3 \times b$ case. There is a rich body of research on chomp with respect to different classes of posets. For more information on the game and its history, we refer to~\cite{web,fr}.

In this work, we investigate chomp on a family of posets arising from additive semigroups of natural numbers instead of their multiplication as in Schuh's game. More precisely, let $\mS = \langle a_1,\ldots,a_n \rangle$ denote the semigroup
generated by positive integers $a_1, \ldots,a_n$, that is,
$$
\mS = \langle a_1,\ldots,a_n \rangle = \{x_1 a_1 + \cdots + x_n a_n \, \vert \, x_1,\ldots,x_n \in \N\}.
$$ 
The semigroup $\mS$ induces on itself a poset structure $(\mS, \lS)$ whose partial
order $\lS$ is defined by \begin{center} $x \lS y \Longleftrightarrow y - x \in
\mS$. \end{center} 
This poset structure on $\mS$ has been considered in \cite{fgh,Kunz} to study algebraic properties of its corresponding semigroup algebra, and in \cite{cgpr,cr,ded} to study its M\"obius function. We observe that there is no loss of generality in assuming that $a_1,\ldots,a_n$ are relatively prime. Indeed,
setting $d := \gcd(a_1,\ldots,a_n)$ and $\mS' := \langle a_1/d,\ldots,a_n/d \rangle$, then $(\mS,\lS)$ and $(\mS', \leq_{\mS'})$ are isomorphic posets. From
now on we will assume that $\mS$ is a {\it numerical semigroup}, that is, it is generated by relatively prime integers. 

One of the peculiarities of chomp on posets coming from numerical semigroups is that after player $A$'s first move, the remaining poset is finite (see Remark~\ref{aperychomp}).
In particular, this implies that every game is finite and either $A$ or $B$ has a winning strategy.

\subsubsection*{Results and structure of the paper:} In Section~\ref{sec:sym} we introduce some elements of the theory of numerical semigroups. In particular, we relate the Ap\'ery sets to the first moves of chomp and show that $B$ has a winning strategy in all symmetric semigroups (Theorem~\ref{simetricoBgana}). Section~\ref{sec:max} is devoted to numerical semigroups of maximal embedding dimension. We characterize those semigroups in the class for which $A$ has a winning strategy (Theorem~\ref{medchomp}) and describe an explicit winning strategy. In Section~\ref{sec:arithmetic} we discuss numerical semigroups that are generated by generalized arithmetic sequences. We characterize, when the smallest generator is a winning move for $A$ (Proposition~\ref{juegaaaritm}) and classify those semigroups in the class, that have three generators and admit a winning strategy for $A$ (Theorem~\ref{genarit3gen}). In Section~\ref{sec:intervals} we consider semigroups generated by intervals. We characterize when $A$ has a winning strategy for $\mS = \langle a, a+1,\ldots, 2a-3\rangle$ with $a \geq 4$ (Proposition~\ref{tipo2consec}) and $\mS = \langle 3k, 3k+1,\ldots, 4k \rangle$ with $k$ odd (Proposition~\ref{juegaa+1aritm}). In Section~\ref{sec:bound} we show, that if $A$ has a winning strategy on $\mS$, then the smallest winning first move is bounded by a function of the number of gaps and the Frobenius number of $\mS$ (Theorem~\ref{boundonstrategy}). In particular, which player wins on a given semigroup is a decidable question. Finally, in Section~\ref{sec:torsion}, we investigate to what extent the game of chomp can be generalized to other algebraically defined posets. In particular, we extend the decidability result  (Theorem~\ref{boundonstrategylattice}) as well as the result on symmetric semigroups (Theorem~\ref{simetricoBganalattice}) to the more general setting of numerical semigroups with torsion, that is, subsemigroups of $\N \times T$, where $T$ is a finite abelian group. We conclude the paper with some questions in Section~\ref{sec:conclude}.

\section{Symmetric numerical semigroups}\label{sec:sym}

One of the most studied families of numerical semigroups is the family of symmetric numerical semigroups. This family contains the family of complete intersection semigroups and is an important subfamily of irreducible semigroups (see, for example,~\cite{RR,Rosa}). Symmetric numerical semigroups turn out to be also interesting in the study of affine monomial curves; in particular, Kunz~\cite{Kunz} proved that a numerical semigroup $\mS$ is symmetric if and only if its corresponding  one-dimensional semigroup algebra $K[\mS] = K[t^s\, \vert \, s \in \mS]$ is Gorenstein.

The goal of this short section is to prove that whenever $\mS$ is symmetric, then player $B$ has a winning strategy. One of the key points to obtain this result will be Remark~\ref{ganaAmax}.

We will now recall some basic results and definitions about numerical semigroups, for a detailed exposition on this topic we refer the
reader to~\cite{Alibro,RGlibro}. For a numerical semigroup $\mS$, 
the \textit{Frobenius number} $g=g(\mS)$ is the largest integer not in $\mS$. The semigroup $\mS$ is {\it symmetric} if 
\[
\mS\cup(g-\mS)=\Z,
\]   
where $g-\mS=\{g-s\mid s\in \mS\}$. In other words, $\mS$ is symmetric if and only if for every $x \in \Z$ either $x \in \mS$ or $g-x \in \mS$.

For a semigroup $\mS$ we set $\mathit{PF} = \{x \notin \mS \, \vert \, x+s \in \mS$ for all $s \in \mS \setminus \{0\}\}.$ The elements
of $\mathit{PF}$ are usually called {\it pseudo-Frobenius numbers} and the number of elements
of $\mathit{PF}$ is called the {\it type} of $\mS$ and denoted by ${\rm type}(\mS)$ (it coincides with
the Cohen-Macaulay type of the corresponding semigroup algebra $K[t^s \, \vert \, s \in \mS]$, where $K$ is any field). We notice that $g$ is always a pseudo-Frobenius number. 
Moreover, by~\cite[Proposition 2]{fgh}
$\mS$ is symmetric if and only if $\mathit{PF} = \{g\}$, or equivalently, when the type of $\mS$ is $1$.

Given $a \in \mS$, the {\it Ap\'ery set} of $\mS$ with respect to $a$ is defined as
$${\rm Ap}(\mS, a) = \{s \in \mS\, \vert \, s-a \notin \mS\}.$$ 
This set, which is a complete set of residues modulo $a$, contains relevant information about the semigroup. The following result illustrates how to obtain the set of pseudo-Frobenius numbers from an Ap\'ery set.

\begin{prop}~\cite[Proposition 7]{fgh} Let $\mS$ be a numerical semigroup and $a \in \mS$. Then, the following conditions are equivalent for any $x \in \Z$:  
\begin{enumerate}[$\bullet$]
\item $x$ is a pseudo-Frobenius number of $\mS$, and   
\item $x + a$ is a maximal element
of ${\rm Ap}(\mS,a)$ with respect to the partial order $\leq_{\mS}$.
\end{enumerate}\end{prop}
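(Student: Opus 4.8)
The plan is to unwind both conditions into identical statements about the membership of translates of $x$ in $\mS$, using throughout that $a \in \mS \setminus \{0\}$ (as is implicit in the definition of the Ap\'ery set, where $a$ is a nonzero element).

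First I would characterize maximality concretely. An element $w \in {\rm Ap}(\mS,a)$ fails to be maximal with respect to $\lS$ exactly when there is some $w' \in {\rm Ap}(\mS,a)$ with $w \lS w'$ and $w \neq w'$, that is, with $w' = w + s$ for some $s \in \mS \setminus \{0\}$. Since $w, s \in \mS$ forces $w + s \in \mS$ automatically, the condition $w + s \in {\rm Ap}(\mS,a)$ is equivalent to $w + s - a \notin \mS$. Hence $w$ is a maximal element of ${\rm Ap}(\mS,a)$ if and only if $w + s - a \in \mS$ for every $s \in \mS \setminus \{0\}$.

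Next I would substitute $w = x + a$ and read off the conditions defining ``$x + a$ is a maximal element of ${\rm Ap}(\mS,a)$'': namely (a) $x + a \in \mS$ (so that $w \in \mS$), (b) $x \notin \mS$ (so that $w - a \notin \mS$, i.e.\ $w \in {\rm Ap}(\mS,a)$), and (c) $x + s \in \mS$ for all $s \in \mS \setminus \{0\}$ (the maximality just derived, after cancelling $a$). On the other side, $x$ being a pseudo-Frobenius number means precisely (i) $x \notin \mS$ and (ii) $x + s \in \mS$ for all $s \in \mS \setminus \{0\}$.

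Finally I would match the two lists: (b) coincides with (i) and (c) coincides with (ii), so the only thing to verify is that (a) is not an extra requirement. This is the one point where a little care is needed: (a) follows from (ii) by specializing $s = a$, using $a \in \mS \setminus \{0\}$, so a pseudo-Frobenius number $x$ automatically satisfies $x + a \in \mS$ and hence $x + a \in {\rm Ap}(\mS,a)$; conversely (a), (b), (c) trivially give (i), (ii). This equivalence of the two condition lists completes the argument. The proof is essentially a bookkeeping of definitions, and the only genuine (though minor) obstacle is recognizing that the Ap\'ery-membership condition (a) is subsumed by the pseudo-Frobenius condition rather than being independent of it.
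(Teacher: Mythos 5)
Your proof is correct. Note that the paper itself offers no proof of this statement: it is quoted as Proposition 7 of the cited reference \cite{fgh}, so there is nothing internal to compare against. Your argument is exactly the expected direct verification: unwinding maximality in $({\rm Ap}(\mS,a),\lS)$ into the condition ``$x+a+s-a=x+s\in\mS$ for all $s\in\mS\setminus\{0\}$'' and matching it, together with the Ap\'ery membership conditions, against the definition of a pseudo-Frobenius number. You also correctly flag the only non-trivial point, namely that the requirement $x+a\in\mS$ is not an independent hypothesis but follows from the pseudo-Frobenius condition by taking $s=a\in\mS\setminus\{0\}$.
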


In particular, the following corollary holds. 

\begin{coro}\label{typeinvariant} Let $\mS$ be a numerical semigroup and $a \in \mS$. The number of maximal elements of ${\rm Ap}(\mS,a)$ with respect to $\leq_{\mS}$ does not depend on the $a \in \mS$ chosen and coincides with ${\rm type}(\mS)$.
\end{coro}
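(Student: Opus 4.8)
The plan is to read the corollary as an immediate consequence of the preceding Proposition, by turning the stated equivalence into an explicit bijection between the set $\mathit{PF}$ of pseudo-Frobenius numbers and the set of maximal elements of ${\rm Ap}(\mS,a)$.

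First I would fix $a \in \mS$ and introduce the translation map $\phi_a \colon \Z \to \Z$ defined by $\phi_a(x) = x + a$, which is a bijection of $\Z$ onto itself with inverse $y \mapsto y - a$. Writing $M_a$ for the set of maximal elements of ${\rm Ap}(\mS,a)$ with respect to $\lS$, I would then invoke the preceding Proposition, which says that for every $x \in \Z$ one has $x \in \mathit{PF}$ if and only if $\phi_a(x) = x + a$ lies in $M_a$. This is exactly the assertion that $\phi_a$ restricts to a bijection $\mathit{PF} \to M_a$: the forward implication gives $\phi_a(\mathit{PF}) \subseteq M_a$, while the reverse implication, applied to $x = y - a$ for an arbitrary $y \in M_a$, shows that the $\phi_a$-preimage of every element of $M_a$ is a pseudo-Frobenius number, so $\phi_a$ is also surjective onto $M_a$. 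Injectivity is free, since $\phi_a$ is a translation. Hence $|M_a| = |\mathit{PF}|$.

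To conclude, I would observe that $\mathit{PF}$, and therefore its cardinality ${\rm type}(\mS)$, are defined purely in terms of $\mS$, with no reference to the chosen $a$. Thus $|M_a|$ equals the same value ${\rm type}(\mS)$ for every $a \in \mS$, which is precisely the statement of the corollary. All cardinalities involved are finite because the pseudo-Frobenius numbers are in particular gaps of $\mS$, and a numerical semigroup has only finitely many gaps. I do not anticipate any real obstacle here: the entire content is supplied by the quoted Proposition, and the only point demanding a moment of care is to check that its ``if and only if'' genuinely upgrades to a bijection (both injectivity and surjectivity of $\phi_a$ restricted to $\mathit{PF}$), rather than a mere inclusion in one direction.
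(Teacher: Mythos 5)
Your argument is correct and is precisely the reasoning the paper intends: the corollary is stated as an immediate consequence of the quoted Proposition, with the translation $x \mapsto x+a$ giving the bijection between $\mathit{PF}$ and the maximal elements of ${\rm Ap}(\mS,a)$, so that both counts equal ${\rm type}(\mS)$ independently of $a$.
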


Going back to chomp, the following remark will be essential in the sequel and establishes a nice connection between chomp and the theory of numerical semigroups.

\begin{rema}\label{aperychomp} Whenever player $A$'s first move is $a \in \mS$, then the remaining poset is exactly
${\rm Ap}(S,a)$.
\end{rema}

This remark together with Corollary~\ref{typeinvariant} and Remark~\ref{ganaAmax} yields the following.

\begin{theorem}\label{simetricoBgana}If $\mS$ is a symmetric numerical semigroup, then player $B$ has a winning strategy for chomp on $\mS$.
\end{theorem}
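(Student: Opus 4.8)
The plan is to exploit the observation that after player $A$'s opening move the game reduces to chomp on a finite poset that, thanks to symmetry, has a global maximum, and then to invoke the strategy-stealing argument of Remark~\ref{ganaAmax}---but with the roles reversed, so that it now hands the win to $B$.

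First I would fix an arbitrary opening move $a \in \mS$ of player $A$. By Remark~\ref{aperychomp} the poset remaining after this move is exactly ${\rm Ap}(\mS,a)$, which is finite and has $0$ as its global minimum (since $0 \in {\rm Ap}(\mS,a)$ and $s - 0 = s \in \mS$ for every $s$). Because $\mS$ is symmetric it has type $1$, so by Corollary~\ref{typeinvariant} the poset ${\rm Ap}(\mS,a)$ has exactly one maximal element with respect to $\leq_{\mS}$. In a finite poset a unique maximal element is automatically a global maximum, as every element lies below some maximal element and here there is only one. Hence ${\rm Ap}(\mS,a)$ has a global maximum.

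Next I would observe that, apart from the trivial case $\mS = \N$, this global maximum differs from $0$. Indeed ${\rm Ap}(\mS,a)$ is a complete set of residues modulo $a$, so it has exactly $a$ elements; since $1 \notin \mS$ every legal opening move satisfies $a \geq 2$, whence $|{\rm Ap}(\mS,a)| \geq 2$ and the global maximum is not $0$. Now comes the crucial point: after $A$ has played, it is $B$ who moves next on ${\rm Ap}(\mS,a)$, so $B$ plays the role of the ``first player'' in Remark~\ref{ganaAmax} applied to this subposet. That remark then guarantees that $B$ has a winning strategy on ${\rm Ap}(\mS,a)$.

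Finally, since the opening move $a$ was arbitrary, $B$ can win no matter how $A$ starts, which is precisely a winning strategy for $B$ on $\mS$. I expect the one step that needs genuine care to be the role reversal: the strategy-stealing argument, which inherently favours whoever moves first, is here applied one move into the game, and at that stage the first player is $B$ rather than $A$. Beyond this conceptual point the remaining work is merely the bookkeeping that symmetry forces a global maximum on every Ap\'ery poset, together with setting aside the degenerate semigroup $\N$.
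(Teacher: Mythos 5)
Your proof is correct and follows exactly the paper's argument: by Remark~\ref{aperychomp} and Corollary~\ref{typeinvariant}, every Ap\'ery poset ${\rm Ap}(\mS,a)$ left after $A$'s first move has a global maximum since ${\rm type}(\mS)=1$, and Remark~\ref{ganaAmax} (applied with $B$ as the player to move) finishes the job. Your additional bookkeeping---checking that the maximum is not $0$ because $a\geq 2$, and flagging the degenerate case $\mS=\N$---is sound and in fact slightly more careful than the published two-line proof.
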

\begin{proof}No matter the first move of $A$, the remaining poset has a global maximum because ${\rm type}(\mS) = 1$. Hence, Remark~\ref{ganaAmax} yields the result.
 \end{proof}

We observe that the proof of Theorem~\ref{simetricoBgana} is not constructive. The task of determining a winning strategy for $B$ in this context seems a
very difficult task. Indeed, we will argue now that it is at least as difficult as providing a winning strategy for $A$ in the $a \times b$-chocolate-bar-game, which, as mentioned in the introduction, remains an unsolved problem even for $a=3$.
%
Namely, if $\mS$ is symmetric, to describe a winning strategy for $B$, one has to exhibit
a good answer for $B$ for every first move of $A$. It is well known (and easy to check) that every
two-generated numerical semigroup is symmetric. Let us consider $\mS = \langle a, b \rangle$ with $a,b$ relatively prime and assume that $A$ starts picking $ac$ with $1 \leq c \leq b$. The remaining poset is
 $${\rm Ap}(\mS, ac) = \{\lambda a + \mu b  \, \vert \, 0 \leq \lambda < c, 0 \leq \mu < a\},$$
 and, for every  $0 \leq \lambda, \lambda' < c$, $0 \leq \mu, \mu' < a$, we have that $\lambda a + \mu c  \leq_{\mS} \lambda' a + \mu' c$ if and only if
 $\lambda \leq \lambda',\ \mu \leq \mu'$. Hence, the remaining poset is isomorphic to the $a \times c$ chocolate bar. Therefore,
providing an explicit winning strategy for $B$ in this poset is equivalent to finding winning strategies for $A$ in chomp on the $a \times c$ chocolate bar for every $1 \leq c \leq b$.

\section{Numerical semigroups of maximal embedding dimension}\label{sec:max}

Given a numerical semigroup $\mS$, its smallest nonzero element is usually called its {\it multiplicity} and denoted by $m(\mS)$. It turns out that every numerical semigroup has a unique minimal generating set, the size of this set is the {\it embedding dimension of $\mS$} and is denoted by $e(\mS)$. It is easy to prove that $m(\mS) \geq e(\mS)$.
Whenever $m(\mS) = e(\mS)$ we say that $\mS$ has {\it maximal embedding dimension}.  
In this section we study chomp on posets coming from numerical semigroups with maximal embedding dimension.

These semigroups have been studied by several authors, see, for instance,~\cite{BDF,MED}. There exist  several characterizations for this family of semigroups,
in particular we use the following ones, which can be found or easily deduced from the ones in~\cite[Chapter 2]{RGlibro}.

\begin{prop}\label{medpropiedades}Let $\mS$ be a numerical semigroup and let $a_1 < \cdots < a_n$ be its unique minimal system of generators. Then the following properties are equivalent.
\begin{itemize}
\item[(a)] $\mS$ has maximal embedding dimension,
\item[(b)] ${\rm Ap}(\mS,a_1) = \{0,a_2,\ldots,a_n\},$
\item[(c)] ${\rm type}(\mS) = a_1 - 1$, 
\item[(d)] $x+y-a_1 \in \mS$ for all $x,y \in \mS \setminus \{0\}$,
\item[(e)] every element $b \in \mS \setminus \{0\}$ can be uniquely written 
as $b = \lambda a_1 + a_i$ with $\lambda \in \N$ and $i \in \{1,\ldots,n\}$.
\end{itemize}
\end{prop}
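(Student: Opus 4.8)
The plan is to take condition (b) as a hub and prove that each of (a), (c), (d), (e) is equivalent to it; the combinatorial engine throughout will be the Ap\'ery decomposition together with one structural observation. First I would record two facts about ${\rm Ap}(\mS,a_1)$. As noted in the text it is a complete set of residues modulo $a_1$, so it has exactly $a_1$ elements, $0$ is its unique minimum, and every $z \in \mS$ is written uniquely as $z = \mu a_1 + w$ with $\mu \in \N$ and $w \in {\rm Ap}(\mS,a_1)$. I also claim $\{0,a_2,\ldots,a_n\} \subseteq {\rm Ap}(\mS,a_1)$: if $a_i - a_1 \in \mS$ for some $i \geq 2$, then $a_i = a_1 + (a_i - a_1)$ is a sum of two nonzero elements of $\mS$, contradicting that $a_i$ is a minimal generator. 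Since these $n$ elements are distinct, $a_1 \geq n$, and (a)$\Leftrightarrow$(b) is immediate: (b) says exactly that this $n$-element subset fills the whole $a_1$-element set.

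The structural observation I would isolate is that ${\rm Ap}(\mS,a_1)$ is a \emph{down-set} of $(\mS,\lS)$: if $s' \lS s$ and $s \in {\rm Ap}(\mS,a_1)$, then $s' \in {\rm Ap}(\mS,a_1)$, because $s' - a_1 \in \mS$ would give $s - a_1 = (s'-a_1) + (s-s') \in \mS$. With this, (b)$\Leftrightarrow$(c) follows from Corollary~\ref{typeinvariant}: the type equals the number of $\leq_{\mS}$-maximal elements of ${\rm Ap}(\mS,a_1)$, which is at most $a_1 - 1$ (the count of nonzero elements), with equality precisely when the nonzero Ap\'ery elements form an antichain. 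Under (b) these are $a_2,\ldots,a_n$, and they form an antichain since $a_i \lS a_j$ would make $a_j$ reducible. Conversely, if some nonzero $s \in {\rm Ap}(\mS,a_1)$ were not a minimal generator, writing $s = x+y$ with $x,y \in \mS \setminus \{0\}$ gives $x \in {\rm Ap}(\mS,a_1)$ (down-set property) with $x <_{\mS} s$, breaking the antichain; hence (c) forces every nonzero Ap\'ery element to be a generator, i.e.\ (b).

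For (b)$\Leftrightarrow$(d): if (d) holds and a nonzero $s \in {\rm Ap}(\mS,a_1)$ is reducible, say $s = x+y$ with $x,y \in \mS \setminus \{0\}$, then $s - a_1 = x+y-a_1 \in \mS$ contradicts $s \in {\rm Ap}(\mS,a_1)$, so (b) holds. For the converse, given $x,y \in \mS\setminus\{0\}$ I would use $x = \lambda a_1 + u$, $y = \mu a_1 + v$: if $\lambda \geq 1$ then $x+y-a_1 = (x-a_1)+y \in \mS$, and symmetrically for $\mu$, leaving only $x = u, y = v \in \{a_2,\ldots,a_n\}$; there $a_i + a_j$ is reducible hence not a generator, so under (b) it lies outside ${\rm Ap}(\mS,a_1)$, its decomposition has positive $a_1$-multiplicity, and $a_i + a_j - a_1 \in \mS$. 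Finally (b)$\Leftrightarrow$(e): under (b) the decomposition $b = \mu a_1 + w$ yields the asserted form (absorbing $w=0$ as $\mu a_1 = (\mu-1)a_1 + a_1$), with uniqueness because $a_1,\ldots,a_n$ are pairwise incongruent modulo $a_1$; conversely, uniqueness in (e) forces these residues to be distinct (two congruent generators $a_i < a_j$ would give $a_j = 0\cdot a_1 + a_j = k a_1 + a_i$), while existence together with cofiniteness of $\mS$ forces them to meet all $a_1$ residue classes, so $n \geq a_1$ and hence (a).

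I expect the fiddliest point to be the forward direction (b)$\Rightarrow$(d): the case split on whether the Ap\'ery multiplicities $\lambda,\mu$ vanish, and the verification that $a_i + a_j$ leaves the Ap\'ery set, is the one place where the two structural facts about ${\rm Ap}(\mS,a_1)$ must be combined carefully. Everything else reduces cleanly to the observations that ${\rm Ap}(\mS,a_1)$ contains $\{0,a_2,\ldots,a_n\}$, has $a_1$ elements, and is a down-set of $(\mS,\lS)$.
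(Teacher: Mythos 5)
Your proof is correct, and it is worth noting that the paper does not actually prove Proposition~\ref{medpropiedades} at all: it states that the equivalences ``can be found or easily deduced from'' \cite[Chapter 2]{RGlibro}, so your write-up supplies an argument the authors deliberately outsource. Your route---taking (b) as a hub and driving everything from two facts about ${\rm Ap}(\mS,a_1)$ (it is a complete residue system of size $a_1$ containing $\{0,a_2,\ldots,a_n\}$, and it is a down-set of $(\mS,\lS)$)---is efficient and self-contained; I checked the two delicate spots, namely the case split $\lambda=\mu=0$ in (b)$\Rightarrow$(d) and the residue argument giving uniqueness in (e), and both are sound. The down-set observation in particular lets you convert ``all nonzero Ap\'ery elements are maximal'' into ``all nonzero Ap\'ery elements are minimal generators'' in one line, which is cleaner than the usual textbook treatment via pseudo-Frobenius numbers. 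One small caveat: for the degenerate semigroup $\mS=\N$ (where $a_1=n=1$) the set ${\rm Ap}(\mS,1)=\{0\}$ has one maximal element, so ${\rm type}(\mS)=1\neq a_1-1$ and (c) fails while (a), (b), (d), (e) all hold; your bound ``number of maximal elements $\leq a_1-1$'' silently assumes $a_1\geq 2$. This is really a defect of the standard formulation of the statement rather than of your argument, but a sentence excluding $\mS=\N$ would make the (b)$\Leftrightarrow$(c) step airtight.
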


For this family of posets we do not only characterize which player has a winning a strategy but also we provide an explicit winning strategy in each case.

We start with a result about Ap\'ery sets that we will need in the sequel. For every numerical semigroup, whenever $a,b \in \mS$ and $a \leq_{\mS} b$, it is easy to check
that ${\rm Ap}(\mS,a) \subset {\rm Ap}(\mS,b)$. The following result furthermore describes the difference between these two sets.

\begin{prop}\cite[Lemma 2]{fghl}\label{aperyunion} Let $\mS$ be a numerical semigroup and $a,b \in \mS$ such that $a \leq_{\mS} b$. Then,
$${\rm Ap}(\mS,b) = {\rm Ap}(\mS,a) \cup  (a + {\rm  Ap}(\mS,b-a) ),$$
and it is a disjoint union.
\end{prop}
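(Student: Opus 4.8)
The plan is to set $c := b-a$, which lies in $\mS$ precisely because $a \lS b$, and to read the asserted identity as
\[
{\rm Ap}(\mS,b) = {\rm Ap}(\mS,a) \,\cup\, \bigl(a + {\rm Ap}(\mS,c)\bigr).
\]
I would prove this by first showing that both sets on the right-hand side are contained in ${\rm Ap}(\mS,b)$ and are disjoint, and then upgrading containment to equality by a cardinality count. The whole argument is driven by the single identity $s-b = (s-a)-c$ (equivalently $s-a = (s-b)+c$), which lets me translate non-membership statements relative to $b$ into non-membership statements relative to $a$ and to $c$.

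For the first containment, ${\rm Ap}(\mS,a) \subseteq {\rm Ap}(\mS,b)$ is the inclusion already noted in the excerpt; concretely, if $s \in {\rm Ap}(\mS,a)$ then $s-a \notin \mS$, and were $s-b \in \mS$ we would get $s-a = (s-b)+c \in \mS$ since $c \in \mS$, a contradiction, so $s-b \notin \mS$ and $s \in {\rm Ap}(\mS,b)$. For the second, take $s \in {\rm Ap}(\mS,c)$; then $a+s \in \mS$, while $(a+s)-b = s-c \notin \mS$, so $a+s \in {\rm Ap}(\mS,b)$. Disjointness is immediate: any element of $a+{\rm Ap}(\mS,c)$ has the form $a+s$ with $s \in \mS$, hence $(a+s)-a = s \in \mS$, which excludes it from ${\rm Ap}(\mS,a)$.

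To conclude, I would invoke the standard fact that ${\rm Ap}(\mS,d)$ is a complete set of residues modulo $d$, so $|{\rm Ap}(\mS,a)| = a$ and $|{\rm Ap}(\mS,c)| = c$. The disjoint union on the right-hand side therefore has exactly $a+c = b = |{\rm Ap}(\mS,b)|$ elements, and being a subset of ${\rm Ap}(\mS,b)$ of the same finite cardinality it must coincide with it. Alternatively one can bypass the counting and prove the reverse inclusion directly: given $s \in {\rm Ap}(\mS,b)$, either $s-a \notin \mS$, so $s \in {\rm Ap}(\mS,a)$, or $s-a \in \mS$, in which case $t := s-a$ satisfies $t-c = s-b \notin \mS$, whence $t \in {\rm Ap}(\mS,c)$ and $s \in a + {\rm Ap}(\mS,c)$. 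Since every step reduces to the identity above, I do not expect a genuine obstacle; the only points requiring care are the bookkeeping that keeps $b$, $a$ and $c=b-a$ consistent, and, if one uses the counting argument, the appeal to the sizes of the Ap\'ery sets.
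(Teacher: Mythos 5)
Your argument is correct: the paper states this proposition as a citation of \cite[Lemma 2]{fghl} and gives no proof of its own, so there is nothing internal to compare against. Both of your routes work --- the direct two-way inclusion via the identity $s-b=(s-a)-(b-a)$ is the standard argument, and the cardinality shortcut is also fine since $|{\rm Ap}(\mS,d)|=d$ for $d\in\mS$ (with the degenerate cases $a=b$ or $a=0$ handled trivially). No gaps.
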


In particular, from this result and Proposition~\ref{medpropiedades}.(b) one easily derives the following result.

\begin{lemma}\label{lemmaMED}Let $\mS = \langle a_1, \ldots, a_m \rangle$ be a maximal embedding dimension numerical semigroup with multiplicity $m = a_1$.
Then, \[ {\rm Ap}(\mS,\lambda m) = \{\mu m, \mu m + a_i \, \vert \, 0 \leq m < \lambda, 2 \leq i \leq m\};\]
and the set of maximal elements of ${\rm Ap}(\mS,\lambda m)$ with respect to $\lS$ is $\{(\lambda-1) m + a_i \, \vert \, 2 \leq i \leq m\}.$ 
\end{lemma}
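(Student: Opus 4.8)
The plan is to prove the two assertions separately: the explicit description of ${\rm Ap}(\mS,\lambda m)$ by induction on $\lambda$, and the description of its $\lS$-maximal elements by invoking the correspondence between maximal elements of Ap\'ery sets and pseudo-Frobenius numbers (the Proposition of~\cite{fgh} quoted above). Throughout, the index in the set on the right-hand side should of course be read as a fresh variable $\mu$ with $0 \le \mu < \lambda$.

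For the first assertion I would induct on $\lambda$. The base case $\lambda = 1$ is exactly Proposition~\ref{medpropiedades}.(b), which gives ${\rm Ap}(\mS,m) = \{0,a_2,\ldots,a_m\}$, i.e. the claimed set with $\mu = 0$. For the inductive step, note that $\lambda m - m = (\lambda-1)m \in \mS$, so $m \lS \lambda m$, and Proposition~\ref{aperyunion} applies with $a = m$ and $b = \lambda m$, yielding the disjoint union
$${\rm Ap}(\mS,\lambda m) = {\rm Ap}(\mS,m) \,\cup\, \bigl(m + {\rm Ap}(\mS,(\lambda-1)m)\bigr).$$
The first set is the $\mu = 0$ layer $\{0, a_2, \ldots, a_m\}$, while translating the inductive description of ${\rm Ap}(\mS,(\lambda-1)m)$ by $m$ turns each index $\mu$ into $\mu + 1$, producing exactly the layers with $1 \le \mu < \lambda$. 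Taking the union gives the stated formula.

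For the maximal elements I would first identify the pseudo-Frobenius numbers. Applying the quoted Proposition of~\cite{fgh} with $a = m$, the set $\mathit{PF}$ equals $\{x - m \mid x \text{ maximal in } {\rm Ap}(\mS,m)\}$. Since ${\rm Ap}(\mS,m) = \{0,a_2,\ldots,a_m\}$, and $0 \lS a_i$ for every $i$ while the generators are pairwise $\lS$-incomparable (if $a_j - a_i \in \mS$ for $i<j$ then $a_j = a_i + (a_j-a_i)$ would contradict minimality of the generating system), the maximal elements of ${\rm Ap}(\mS,m)$ are precisely $a_2,\ldots,a_m$. Hence $\mathit{PF} = \{a_i - m \mid 2 \le i \le m\}$. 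Applying the same Proposition now with $a = \lambda m$, the maximal elements of ${\rm Ap}(\mS,\lambda m)$ are $\{p + \lambda m \mid p \in \mathit{PF}\} = \{(\lambda-1)m + a_i \mid 2 \le i \le m\}$, as claimed.

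Both steps are short once the right tools are lined up; the only point requiring care — and the part I expect to be the genuine obstacle — is the identification of the maximal elements. A direct verification that each $(\lambda-1)m + a_i$ is $\lS$-maximal is surprisingly fiddly, since for $\lambda \ge 2$ an element $\mu m + a_j$ with $\mu < \lambda - 1$ can be numerically larger yet still $\lS$-incomparable, so one cannot rule out candidates by size alone. Routing through the pseudo-Frobenius numbers sidesteps this entirely, and together with Corollary~\ref{typeinvariant} and Proposition~\ref{medpropiedades}.(c) — which independently fix the number of maximal elements at ${\rm type}(\mS) = m - 1$ — it makes the identification transparent.
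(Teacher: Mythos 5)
Your proof is correct and follows essentially the route the paper intends: the paper gives no explicit proof of this lemma, stating only that it follows from Proposition~\ref{aperyunion} and Proposition~\ref{medpropiedades}.(b), which is precisely the induction you carry out for the description of ${\rm Ap}(\mS,\lambda m)$. Your identification of the maximal elements via the pseudo-Frobenius correspondence (and the incomparability of minimal generators) is sound, and your closing remark that Corollary~\ref{typeinvariant} together with Proposition~\ref{medpropiedades}.(c) pins down the count at $m-1$ matches the paper's toolkit as well.
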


Now, we can proceed with the proof of the main result of this section.

\begin{theorem}\label{medchomp}
Let $\mS$ be a maximal embedding dimension numerical semigroup with multiplicity $m$. Player $A$ has a winning strategy for chomp on $\mS$ if and only if $m$ is odd.
\end{theorem}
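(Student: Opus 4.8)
The plan is to use Remark~\ref{aperychomp} to turn each opening move $a\in\mS\setminus\{0\}$ of $A$ into the finite game of chomp on ${\rm Ap}(\mS,a)$, where now $B$ moves first, and then to decide that game. The workhorse is a parity fact that I would record first: if $P=\{0\}\cup C$ with $C$ an antichain all of whose elements cover $0$, then every move except taking $0$ deletes exactly one element of $C$, so after $|C|$ deletions the player to move is forced onto $0$; hence the player to move wins if and only if $|C|$ is odd. By Proposition~\ref{medpropiedades}(b), ${\rm Ap}(\mS,m)=\{0,a_2,\dots,a_m\}$ is precisely such a poset, with $|C|=m-1$.

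For the implication ``$m$ odd $\Rightarrow A$ wins'' I would have $A$ open with $a=m$. The remaining poset is the antichain $\{0,a_2,\dots,a_m\}$ with $|C|=m-1$ even and $B$ to move, so by the parity fact $B$ loses; explicitly, $A$ fixes a pairing of $a_2,\dots,a_m$ into $(m-1)/2$ pairs and always answers a deletion of $B$ by deleting its partner, until $B$ is forced onto $0$.

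For the converse I must show that when $m$ is even player $B$ wins on ${\rm Ap}(\mS,a)$ for \emph{every} opening move $a$. The first step is to exploit the move $m$: since the complement in ${\rm Ap}(\mS,a)$ of the up-set of $m$ is $\{x\in{\rm Ap}(\mS,a)\mid x-m\notin\mS\}={\rm Ap}(\mS,a)\cap{\rm Ap}(\mS,m)=\{0\}\cup\{a_i\mid a_i\in{\rm Ap}(\mS,a)\}$, the move $m$ leaves $A$ an antichain over $0$. As $a_i\in{\rm Ap}(\mS,a)$ fails exactly when $a=a_i$, this antichain has size $m-2$ when $a$ is a minimal generator and $m-1$ otherwise. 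Thus if $a\in\{a_2,\dots,a_m\}$ then $B$ plays $m$ and leaves $A$ to move on an antichain of \emph{even} size $m-2$, so $B$ wins by the parity fact; and if $a=m$ the poset is already the antichain of odd size $m-1$ with $B$ to move, so $B$ wins directly. This disposes of every opening move that is a minimal generator, $m$ included.

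The remaining and hardest case is an opening move $a\notin\{m,a_2,\dots,a_m\}$, for which the reply $m$ leaves an antichain of odd size $m-1$ and so is losing; here $B$ needs a genuinely different response. The plan is to induct on $a$ (equivalently on $|{\rm Ap}(\mS,a)|=a$), using Lemma~\ref{lemmaMED} together with the decomposition of Proposition~\ref{aperyunion} to peel off either the entire top block via the move $(c-1)m$ (which deletes the $m$ maximal-block elements and returns ${\rm Ap}(\mS,(c-1)m)$) or a single top maximal element $(c-1)m+a_m$, and to show that one of these replies always lands $A$ in a losing position. I expect the main obstacle to be that, unlike for the antichains above, the order among the non-atom elements $\mu m+a_i$ of ${\rm Ap}(\mS,a)$ is governed by which differences $(\mu'-\mu)m+a_j-a_i$ lie in $\mS$, and this depends on the arithmetic of the particular semigroup, whereas the outcome does not. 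Consequently the chosen reply, and the proof that it reaches a losing position for $A$, must be robust against these semigroup-dependent relations; a mere count of maximal elements will not suffice (one already finds small posets with equally many maximal elements but opposite outcomes), so the technical heart of the argument is an invariant or inductive bookkeeping that certifies the losing position uniformly in $\mS$.
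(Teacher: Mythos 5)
Your forward direction and the two easy cases of the converse are correct and coincide with the paper's argument: when $m$ is odd, $A$ opens with $m$ and wins the even antichain over $0$; when $m$ is even, $B$ answers an opening generator $a_i$ with $m$ (leaving an even antichain of size $m-2$) and wins the odd antichain directly if $A$ opens with $m$. But the converse is not actually proved: for opening moves outside $\{m,a_2,\dots,a_m\}$ you only announce a plan (``induct on $a$'', ``peel off either the entire top block via the move $(c-1)m$ or a single top maximal element'') and explicitly defer ``the technical heart of the argument'' to an unspecified invariant. That is exactly the part that carries the theorem, so there is a genuine gap. Moreover, one of your two candidate replies is demonstrably losing: if $A$ opens with $\lambda m$ ($\lambda\ge 2$) and $B$ answers $(\lambda-1)m$, the remaining poset is exactly ${\rm Ap}(\mS,(\lambda-1)m)$ with $A$ to move --- the very position reached after an opening move of $(\lambda-1)m$, which you are simultaneously claiming is a win for the player to move. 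So the induction cannot be run with that reply, and the correct reply in the other subcase ($A$ opens with $(\lambda-1)m+a_i$, $i\ge2$) is $\lambda m$, which is neither of your candidates.

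The missing idea, supplied by the paper, is the layer decomposition $\mS_\lambda=\{(\lambda-1)m+a_i \mid 1\le i\le m\}$ together with the consequence of Lemma~\ref{lemmaMED} that ${\rm Ap}(\mS,\lambda m)=\left(\bigcup_{\mu\le\lambda}\mS_\mu\right)\setminus\{\lambda m\}$ and that its maximal elements are exactly $\mS_\lambda\setminus\{\lambda m\}$, an antichain of size $m-1$ sitting freely on top of the lower layers. $B$'s reply to $A$'s first move in layer $\lambda$ stays in layer $\lambda$: answer $\lambda m$ with any $(\lambda-1)m+a_j$, and answer $(\lambda-1)m+a_i$ ($i\ge2$) with $\lambda m$; in both cases the position becomes ${\rm Ap}(\mS,\lambda m)$ minus one maximal element, i.e.\ the lower layers intact plus an even antichain on top. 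From there $B$ pairs up $A$'s moves inside the top antichain until $A$ is forced into a lower layer $\mu<\lambda$, at which point $B$ repeats the same reply one layer down; that reply clears whatever survives of the higher layers, so the invariant is restored and $A$ is eventually driven to $0$. This also dissolves the obstacle you worry about: the strategy never needs the semigroup-dependent order relations among the elements $\mu m+a_i$ of different layers, only the fact that the whole top layer is maximal, which Lemma~\ref{lemmaMED} provides uniformly in $\mS$.
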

\begin{proof}Let $m = a_1 < a_2 < \cdots < a_{m}$ be the minimal set of generators of $\mS$.

$(\Leftarrow)$ If $m$ is odd and player $A$ picks $m$, then the remaining poset is $Ap(\mS,m) = \{0,a_2,\ldots,a_m\}$ which
has $0$ as minimum and $m-1$ non-comparable (with respect to $\leq_{\mS}$) maximal elements (see Figure~\ref{fig:poset1}). Since $m-1$ is even, it is easy to see that player $B$ loses this game.
 \begin{figure}[htb]
  \centering
  \includegraphics{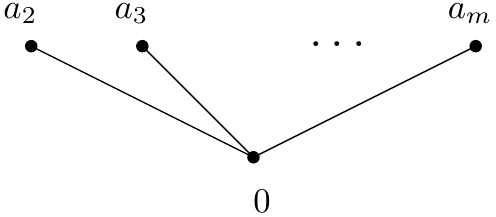}
  \caption{The poset ${\rm Ap}(S, m)$.}
  \label{fig:poset1}
 \end{figure}
 
$(\Rightarrow)$ 
To prove this implication we divide $\mS$ into layers $\{\mS_{\lambda}\}_{\lambda  \in \N}$, being $\mS_0 := \{0\}$ and $\mS_{\lambda} = \{(\lambda-1) m + a_i, \vert \, 1 \leq i \leq m\}$ for $\lambda \geq 1$. By Lemma~\ref{lemmaMED}  we have that  ${\rm Ap}(S, \lambda m) = (\cup_{\mu \leq \lambda} \mS_{\mu}) \setminus \{\lambda m\}$ and has exactly $m-1$ maximal elements, namely those of $\mS_{\lambda} \setminus \{\lambda m\}$.

To prove that $B$ has a winning strategy, we show that whenever $A$'s first move is an element of $\mS_{\lambda}$,  then
$B$ has an answer in $\mS_{\lambda}$ so that:
\begin{itemize}
\item[(a)] the remaining poset $P$ satisfies that $\cup _{\mu < \lambda} \mS_{\mu} \subseteq P \subsetneq \cup _{\mu \leq \lambda} \mS_{\mu}$, and
\item[(b)] $A$ is eventually forced to pick an element from a layer $\mS_{\mu}$ with $\mu < \lambda$. 
\end{itemize}

 We separate two cases. If $A$ picks $x = \lambda m$, then the remaining poset is exactly ${\rm Ap}(S, \lambda m)$. This poset has $m-1$ 
maximal elements which are exactly the elements of $\mS_{\lambda} \setminus \{\lambda m\}$. Since $m-1$ is odd, $B$ can force $A$ to pick an element of a layer $\mu < \lambda$ by just choosing elements from $\mS_{\lambda}$. If $A$ picks $x = (\lambda-1) m + a_i$ with $i \geq 2$, then $\lambda a_1 \in {\rm Ap}(S,x)$. Now $B$ picks $\lambda a_1$ and the remaining poset is exactly ${\rm Ap}(S, \lambda m) \setminus \{x\}$. This poset has $m-2$ elements of $\mS_{\lambda}$ and they are all maximal. Since $m-2$ is even, $B$ can force $A$ to pick an element of a layer $\mu < \lambda$. Iterating this strategy yields a winning strategy for $B$.
  \end{proof}

As we mentioned before, this result does not only characterize who has a winning strategy but also provides explicit winning strategies.

%

\section{Numerical semigroups generated by generalized arithmetic sequences}\label{sec:arithmetic}

A {\emph generalized arithmetic sequence} is set of positive integers of the form $a < ha+d < \cdots < ha+kd$ for some $a, d, k, h \in \Z^+$. Since such a sequence 
generates a numerical semigroup if and only if $\gcd\{a,d\} = 1$, from now on we assume that this is the case.
Several authors have studied semigroups generated by generalized arithmetic sequences (see, for example \cite{EL,Mat2004,Selmer}) as well as their relation with
 monomial curves (see, for example, \cite{BGG,SZ}). This section concerns chomp on semigroups generated by a generalized arithmetic sequence of positive integers. The main result of this section is Theorem~\ref{genarit3gen}, where we characterize which player has a winning strategy for chomp on $\mS$ when $k = 2$.

A first easy observation is that $\{a,ha+d,\ldots,ha+kd\}$ minimally generates $\mS$ if and only if $k < a$; otherwise $\mS = \langle a, ha + d, \ldots, a(h+d) - d \rangle$. Thus, from now on we assume that $k < a$. We observe that a semigroup $\mS$ generated by a generalized arithmetic sequence is of maximal embedding dimension if and only if $k = a-1$. Hence, when $k = a-1$, Theorem~\ref{medchomp} applies here to conclude that $A$ has a winning strategy if and only if $a$ is odd.


We are now going to characterize when $a$ is a winning first move for $A$ in the chomp game; for this purpose we study ${\rm Ap}(\mS,a)$. Although the description of ${\rm Ap}(\mS,a)$ is due to Selmer~\cite{Selmer} (see also~\cite{Mat2004}), here we include a slightly refined version of his result where we also describe the ordering $\lS$ on ${\rm Ap}(\mS,a)$.

\begin{prop}\label{aperyaaritmetica}Let $\mS = \langle a, ha+d, \ldots, ha+kd \rangle$ with $a, k, d, h \in \Z^+,\ k < a$ and $\gcd\{a,d\} = 1$. Then, 
\[
	{\rm Ap}(\mS,a) = \left\{  \left\lceil \frac{i}{k} \right\rceil ha + id \, \vert \, 0 \leq i < a \right\}.
\]
Moreover, if we take $t \equiv (a - 1)\ {\rm mod}\ k$, $t \in \{1,\ldots,k\}$, denote $x_{j,\ell} := j(ha + kd) - k d + \ell d$ for $j,\ell \in \N$, and set

\[ A_j := \left\{  \begin{array}{llll} \{x_{0,k}\} &$ if $& j = 0, \\ \{x_{j,\ell}  \, \vert  \, 1 \leq \ell \leq k\} & $ 
if $& 1 \leq j < \left\lceil \frac{a-1}{k} \right\rceil$,  $ \\ \{ x_{j,\ell} \, \vert \, 1 \leq \ell \leq t\} & $ if $& j =  \left\lceil \frac{a-1}{k} \right\rceil; \end{array} \right. \] 
then we have that ${\rm Ap}(\mS,a)$ is the disjoint union of $A_j$ with $0 \leq j \leq \left\lceil \frac{a-1}{k} \right\rceil $. In addition, if 
$x_{j,\ell} \in A_j$, $x_{j',\ell'} \in A_{j'}$; then,
\[ x_{j,\ell}  <_{\mS} x_{j',\ell'} \Longleftrightarrow  j < j' {\text \ and \ } \ell \geq \ell'. \]
\end{prop}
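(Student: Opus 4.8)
The plan is to treat the three assertions in turn, basing everything on a single structural fact extracted from the Ap\'ery computation. Writing an arbitrary $z \in \mS$ as $z = \lambda a + \sum_{s=1}^{k}\mu_s(ha+sd)$ and putting $M := \sum_{s}\mu_s$ and $N := \sum_{s} s\mu_s$, we obtain $z = (\lambda + hM)a + Nd$, so that $z \equiv Nd \pmod{a}$. Since the $\mu_s$ contribute to $N$ parts drawn from $\{1,\ldots,k\}$, the least $M$ that realizes a prescribed $N \geq 1$ is $\lceil N/k\rceil$; taking $\lambda = 0$ together with the least admissible value $N = i$ (larger $N$ in the same class only increase $z$) then shows that the smallest element of $\mS$ in the residue class of $id \pmod a$ is $\omega_i := \lceil i/k\rceil ha + id$. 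As $\gcd\{a,d\}=1$, the classes $id \pmod a$ with $0 \leq i < a$ are pairwise distinct, whence ${\rm Ap}(\mS,a) = \{\omega_i \mid 0 \leq i < a\}$, which is the first assertion. The same computation yields the membership test used below: for an integer $z \geq 0$ with $z \equiv id \pmod a$ and $0 \leq i < a$, one has $z \in \mS$ if and only if $z \geq \omega_i$.

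For the decomposition I would rewrite $x_{j,\ell} = jha + \bigl((j-1)k+\ell\bigr)d$, so that $x_{j,\ell} = \omega_i$ with $i = (j-1)k+\ell$; note $\lceil i/k\rceil = j$ whenever $1 \leq \ell \leq k$. As $(j,\ell)$ runs over the index sets defining $A_0, \ldots, A_J$ with $J := \lceil (a-1)/k\rceil$, the corresponding index $i$ sweeps out $0,1,\ldots,(J-1)k+t$ in disjoint consecutive blocks. The one point to check is that $(J-1)k+t = a-1$, which holds because $t \in \{1,\ldots,k\}$ with $t \equiv a-1 \pmod k$ forces $a-1 = (J-1)k+t$. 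Hence the sets $A_j$ are pairwise disjoint and their union is $\{\omega_i \mid 0 \leq i < a\} = {\rm Ap}(\mS,a)$, giving the claimed disjoint union.

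For the order relation, set $i = (j-1)k+\ell$, $i' = (j'-1)k+\ell'$, $\delta := j'-j$, and consider $w := x_{j',\ell'} - x_{j,\ell} = \delta\,ha + Dd$, where the decisive identity is $D := i'-i = \delta k + (\ell'-\ell)$. A short sign analysis shows $w > 0$ exactly when either $\delta \geq 1$, or $\delta = 0$ and $\ell' > \ell$; and in each such case $0 < D < a$ (using $D = i'-i$ with $i,i' \in \{0,\ldots,a-1\}$). Thus whenever $w > 0$ the membership test applies in the form $w \in \mS \iff \delta \geq \lceil D/k\rceil$. Evaluating $\lceil D/k\rceil$ from $D = \delta k + (\ell'-\ell)$ with $\ell,\ell' \in \{1,\ldots,k\}$ gives $\lceil D/k\rceil = \delta$ when $\ell \geq \ell'$ and $\lceil D/k\rceil = \delta + 1$ when $\ell < \ell'$. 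Combining these facts, $w \in \mS\setminus\{0\}$ holds precisely when $\delta \geq 1$ and $\ell \geq \ell'$; that is, $x_{j,\ell} <_{\mS} x_{j',\ell'}$ if and only if $j < j'$ and $\ell \geq \ell'$, as asserted.

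The two Ap\'ery computations are routine, so the crux of the statement is the order relation, and within it the delicate point is ruling out $w \in \mS$ when $\ell < \ell'$ (and when $\delta = 0$). The device that makes this clean is to avoid manipulating any explicit representation of $w$ and instead to reduce membership to the single inequality $\delta \geq \lceil D/k\rceil$ via the Ap\'ery membership test; for this reduction it is essential to first observe that $D = i'-i$ and that $0 < D < a$ in exactly the cases where $w > 0$, so that $D$ is the canonical residue index and $\omega_D$ is the least element of $\mS$ in its class.
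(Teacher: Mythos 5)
Your proof is correct. Note that the paper itself gives no proof of this proposition: it attributes the description of ${\rm Ap}(\mS,a)$ to Selmer and states the refinement (the blocks $A_j$ and the order relation) without argument, so there is nothing to compare against line by line. Your reconstruction is sound: the reduction of every $z\in\mS$ to the form $(\lambda+hM)a+Nd$ with $M\geq\lceil N/k\rceil$ correctly identifies $\omega_i=\lceil i/k\rceil ha+id$ as the least element of $\mS$ in the class $id\bmod a$; the identification $x_{j,\ell}=\omega_{(j-1)k+\ell}$ together with $a-1=(J-1)k+t$ gives the partition; and the membership test $w\in\mS\iff\delta\geq\lceil D/k\rceil$ with $D=\delta k+(\ell'-\ell)$, split according to the sign of $\ell'-\ell$, yields exactly the stated order. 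The one place a reader might pause, the claim that $0<D<a$ whenever $w>0$ so that $D$ really is the canonical residue index, is handled correctly by your sign analysis.
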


\begin{figure}[ht]
\begin{center}
\includegraphics[scale=1]{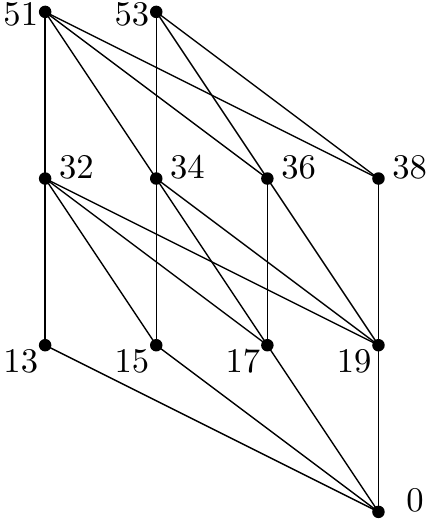}
\caption{The poset ${\rm Ap}(\mS,11)$ with $\mS = \langle 11, 13, 15, 17, 19 \rangle$.}
\end{center}
\label{fig:ap11} 

\end{figure}

In particular, from Proposition~\ref{aperyaaritmetica} one gets that the set of maximal elements of ${\rm Ap}(\mS,a)$ is  $A_j$ with $j = \left\lceil \frac{a-1}{k} \right\rceil$.
According to Corollary~\ref{typeinvariant}, this gives that ${\rm type}(\mS) = t$ being $t \equiv a-1 \ ({\rm mod}\ k)$
and $t \in \{1,\ldots,k\}$. In particular,
$\mS$ is symmetric if and only if $a \equiv 2 \ ({\rm mod}\ k)$ or, equivalently, if $a-2$ is a multiple of $k$ (this result was already known, see Estrada and L\'opez~\cite{EL}, and
Matthews~\cite{Mat2004}). Moreover, if $k$ is even, then ${\rm type}(\mS)$ is even if and only if $a$ is odd.

The following result characterizes when $a$ is a winning first move for chomp on a numerical semigroup generated by a generalized arithmetic sequence.

\begin{prop}\label{juegaaaritm}Let $\mS = \langle a,ha+d,\ldots, ha+kd \rangle$ be a numerical semigroup. Then,
$a \in \mS$ is a winning first move in the chomp game on $(\mS,\lS)$ if and only if $a$ is odd and $k$ is even.
\end{prop}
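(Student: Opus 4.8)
The plan is to apply Remark~\ref{aperychomp} to reduce the statement to a single finite poset game. Once $A$ opens with $a$, the remaining poset is ${\rm Ap}(\mS,a)$ and it is $B$'s turn, so $a$ is a winning first move if and only if the player to move on ${\rm Ap}(\mS,a)$ loses (a second-player win). Since, by the observation preceding the statement, ${\rm type}(\mS)=t$ and, when $k$ is even, $t$ is even exactly when $a$ is odd, I would first restate the goal as: the player to move on ${\rm Ap}(\mS,a)$ loses if and only if $k$ and $t$ are both even. All further reasoning takes place on the explicit poset of Proposition~\ref{aperyaaritmetica}.

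Next I would read off the dynamics of a move. Arrange the nonzero elements into \emph{columns} indexed by $\ell\in\{1,\dots,k\}$, where column $\ell$ is the chain $x_{1,\ell}<_{\mS}x_{2,\ell}<_{\mS}\cdots$; by Proposition~\ref{aperyaaritmetica} column $\ell$ has height $J:=\lceil(a-1)/k\rceil$ if $\ell\le t$ and height $J-1$ if $\ell>t$, and $x_{j,\ell}<_{\mS}x_{j',\ell'}$ precisely when $j<j'$ and $\ell\ge\ell'$. Picking $x_{j,\ell}$ removes its up-set, which has the rigid effect of truncating column $\ell$ to height $j-1$, capping every column to its \emph{left} at height $j$, and leaving every column to its \emph{right} untouched. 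Recording a position by the vector of column heights, the full starting position is the vector with $t$ entries equal to $J$ followed by $k-t$ entries equal to $J-1$; in particular, picking a maximal element $x_{J,\ell}$ (with $\ell\le t$) deletes only that element.

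The core of the argument is the case $k$ even. I would pair the columns as $(1,2),(3,4),\dots,(k-1,k)$ and give the responding player (the one not to move) the invariant $\mathcal{I}$: \emph{both columns of every pair have the same height}. The strategy is to mirror inside a pair: answer a move at $x_{j,\ell}$ by the move at $x_{j,\ell'}$, where $\ell'$ is the partner of $\ell$, at the \emph{same} level $j$. The key point is that left-capping is uniform, hence preserves equality inside every pair situated to the left of the move, while pairs to the right are untouched; the move unbalances only the pair containing $\ell$, and the mirror reply rebalances exactly that pair. One then checks that the mirror reply is always legal—the partner cell at level $j$ is present precisely because the pair was balanced beforehand—and that the empty position satisfies $\mathcal{I}$. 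Since the full starting vector satisfies $\mathcal{I}$ if and only if the boundary between the height-$J$ and height-$(J-1)$ columns falls between two pairs, that is if and only if $t$ is even, this shows that for $k$ even and $t$ even the player to move loses, so $a$ is winning.

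For the remaining cases I would produce a winning opening for the player to move. If $k$ is odd, picking $x_{1,k}$ removes all higher levels and leaves the antichain $\{x_{1,1},\dots,x_{1,k-1}\}$ of even size $k-1$, which is a second-player win, so $B$ wins and $a$ is not winning. If $k$ is even and $t$ is odd, picking the maximal element $x_{J,t}$ deletes only itself and leaves the poset of the same shape with top row $\{x_{J,1},\dots,x_{J,t-1}\}$, i.e. a balanced $\mathcal{I}$-position with $t-1$ even, which by the previous paragraph is a second-player win; hence $B$ wins and again $a$ is not winning. The step I expect to be the real obstacle is making the mirror strategy watertight: one must verify that the uniform left-cap genuinely preserves $\mathcal{I}$ on all pairs except the active one, that the rebalancing move is legal in every configuration reachable under $\mathcal{I}$, and—crucially—that a nonempty $\mathcal{I}$-position can never be emptied by one move. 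The last holds because the columns of positive height come in equal-height pairs, so at least two of them survive and ${\rm Ap}(\mS,a)$ then has at least two minimal elements (no least element); since a single move deletes only one up-set, it cannot delete everything, and the responder always has a reply.
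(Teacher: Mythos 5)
Your proof is correct and follows essentially the same route as the paper: the paper's argument uses the same reduction to ${\rm Ap}(\mS,a)$ via Proposition~\ref{aperyaaritmetica}, the same winning replies for $B$ ($x_{1,k}=ha+kd$ when $k$ is odd, the maximal element $x_{J,t}$ when $k$ is even and $t$ is odd), and the same pairing of adjacent columns $(1,2),(3,4),\dots,(k-1,k)$ at equal levels. The only cosmetic difference is that the paper packages your hand-verified mirroring invariant into a general involution lemma (Lemma~\ref{downsetstrat}) applied to $\varphi(x_{j,\ell})=x_{j,\ell\pm1}$ with unique fixed point $0$.
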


To prove this result, we will use Proposition~\ref{aperyaaritmetica}, together with the following Lemma (which  is an improvement of 
\cite[Fact 1.5]{fr}):

\begin{lemma}\label{downsetstrat}Let $P$ be a finite poset with a minimum element $0$ and let $\varphi: P \rightarrow P$ be such that 
\begin{itemize} 
\item[(a)] $\varphi \circ \varphi = {\rm id}_P$ (that is, $\varphi$ is an involution),
\item[(b)] if $x \leq \varphi(x)$, then $x = \varphi(x)$,
\item[(c)] if $x \leq y$, then $\varphi(x) \leq \varphi(y)$ or $x \leq \varphi(y)$, and
\item[(d)] the set $F := \{x \in P \, \vert \, \varphi(x) = x\}$ of fixed points of $\varphi$ is a \emph{down-set} of $P$ (that is, $F$ is a subset of $P$ such that if $x \in F$ and $y \leq x \Rightarrow y \in F$).
\end{itemize}
Then, $A$ has a winning strategy on $P$ if and only if $A$ has a winning strategy on $F$. 
\end{lemma}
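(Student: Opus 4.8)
The plan is to use the involution $\varphi$ to set up a pairing strategy that reduces the game on $P$ to the game on the down-set $F$ of fixed points. The intuition is that $\varphi$ pairs up the non-fixed elements of $P$, and whenever a player picks one element of a pair, the opponent can respond with its partner; conditions (a)--(d) are precisely what is needed to make such a ``mirroring'' well-defined and consistent with the poset structure. The claim then is that the outcome of chomp on $P$ is governed entirely by what happens on $F$, because the paired-up part of $P$ can always be neutralized by the player who is not forced to enter $F$.

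First I would make the pairing idea precise. For $x \in P \setminus F$ we have $\varphi(x) \neq x$ by definition of $F$, and by (a) the map $\varphi$ partitions $P \setminus F$ into two-element orbits $\{x, \varphi(x)\}$. The key structural point to extract from the hypotheses is the following: picking an element $x$ removes its up-set, and I want to argue that after a player picks some $x$ and the opponent responds with $\varphi(x)$, the resulting position is again $\varphi$-invariant (i.e. closed under $\varphi$) and still satisfies (a)--(d) relative to the restricted $\varphi$. Condition (c) is the workhorse here: it controls how $\varphi$ interacts with the order, guaranteeing that removing the up-set of $x$ together with the up-set of $\varphi(x)$ removes a $\varphi$-closed set, so that the surviving poset is again $\varphi$-invariant. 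Condition (b) rules out the degenerate possibility that $x$ and $\varphi(x)$ are comparable in a way that would make one lie in the up-set of the other (forcing $x = \varphi(x)$), which is what keeps the two moves genuinely distinct and the pairing consistent.

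With the pairing established, I would prove both directions by describing explicit strategies. For the direction that a winning strategy on $F$ lifts to one on $P$: the distinguished player plays according to the winning strategy of the $F$-game whenever the opponent plays inside $F$, and responds with $\varphi(x)$ whenever the opponent plays some $x \notin F$. Because $F$ is a down-set (condition (d)), a move inside $F$ never removes anything outside $F$ that is paired, so the two ``sub-games'' do not interfere destructively; the mirror responses keep the complement $P \setminus F$ balanced and $\varphi$-closed, so the player with the winning $F$-strategy is never the one forced to pick $0$. Conversely, if the opponent has a winning strategy on $F$, the same mirroring argument shows it lifts to $P$, because a strategy-stealing/mirroring argument shows the play on $P \setminus F$ contributes an even number of moves and cannot change who is eventually pushed into $F$ and ultimately onto $0 \in F$.

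The main obstacle I expect is verifying that the class of $\varphi$-invariant sub-positions is genuinely closed under the ``pick $x$, answer $\varphi(x)$'' operation, and that the restricted involution still satisfies (a)--(d) on the survivor. Concretely, the delicate step is showing that the up-set of $x$ and the up-set of $\varphi(x)$ together form a $\varphi$-closed set and that no stray unpaired element survives; this is exactly where condition (c) must be applied carefully, handling both of its alternatives ($\varphi(x) \leq \varphi(y)$ or $x \leq \varphi(y)$) to track where the partner of a removed element goes. Once this invariance lemma is in hand, the two strategy descriptions above, together with the observation that $0 \in F$ (so whoever is forced onto $0$ is forced inside the $F$-game), close the argument.
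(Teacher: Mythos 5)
Your proposal is the same pairing/mirroring strategy as the paper's proof: answer a move $x\notin F$ with $\varphi(x)$, follow the winning strategy on $F$ otherwise, and use (a)--(c) to guarantee the partner is still available while (d) keeps the two parts of the game from interfering. One point needs correcting, though: you claim that condition (d) ensures ``a move inside $F$ never removes anything outside $F$ that is paired.'' That statement is false (picking $y\in F$ removes its entire up-set in $P$, which will in general contain non-fixed points), and it is also not what (d) is for. The fact actually needed, and what (d) delivers, is the reverse non-interference: if $x\notin F$ and $y\geq x$, then $y\notin F$ (since $F$ is a down-set), so mirror moves outside $F$ never delete elements of $F$, and hence the restriction of the play to $F$ evolves exactly as a game on $F$. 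That moves \emph{inside} $F$ remove non-fixed elements only in whole $\varphi$-pairs is a separate fact, which follows from (c) applied with $\varphi(y)=y$; it is covered by the invariance claim you already planned to prove (the surviving position is always $\varphi$-closed), so once you carry that out for moves in $F$ as well as for the pick-$x$-then-$\varphi(x)$ exchanges, the argument closes as in the paper.
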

\begin{proof}Assume that player $A$ has a winning strategy on $F$ and let us exhibit a winning strategy on $P$. Player $A$ starts by picking a winning move in $F$. Now, whenever $B$ picks $x \in P$, we separate two cases. 

{\em Case $x \in F$}. $A$ replies in $F$ following his winning strategy on $F$.

{\em Case $x \notin F$.} $A$ replies $\varphi(x) \notin F$.  

We claim that this gives a strategy for $A$. Indeed, since $F$ is a down-set, picking an element $x \notin F$ at any stage of the game does not alter the remaining elements of $F$. Moreover, conditions (a), (b) and (c) justify that at any state of the game, whenever $B$ picks an element $x \notin F$, the element $\varphi(x)$ is in the remaining poset and, hence, it can be removed by $A$ in the following move. Since the poset is finite, player $B$ cannot pick elements not belonging to $F$ forever and thus, will be forced to play in $F$. Eventually $B$ will be forced to take the minimum because $A$ has a winning strategy on $F$. 

If $A$ does not have a winning strategy on $P$, then $B$ has one (because $P$ is finite). A similar argument to the previous one gives a winning strategy for $B$, completing the proof.
 \end{proof}

\begin{proof}[of Proposition~\ref{juegaaaritm}.] By assuming that A picks $a$, we have that the resulting poset is $({\rm Ap}(\mS,a),\lS)$.
If $k$ is odd, then for $B$ to pick $ha+kd$ is a winning move since the resulting poset is $P = \{0\} \cup \{ha+d,\ldots,ha + (k-1) d\}$ and $P \setminus \{0\}$ is an  antichain  (a set of pairwise incomparable elements) with an even number of elements (see Figure~\ref{fig:antichain}).
 
\begin{figure}[ht]
  \centering
 \includegraphics[scale=1]{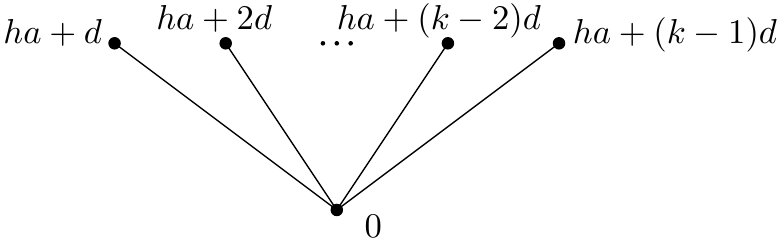}
 \caption{An even antichain with $k-2$ elements and a global minimum.}
 \label{fig:antichain}
\end{figure}  
 
 Suppose now that $k$ is even. Following the notation of Proposition~\ref{aperyaaritmetica}, we first assume that $a$ is odd, that is, $t$ is even, and consider the involution $\varphi: {\rm Ap}(\mS,a) \rightarrow {\rm Ap}(\mS,a)$ defined as $\varphi(x_{j,\ell}) = x_{j,\ell'}$, with
\[ \ell' = \left\{ \begin{array}{llll} 
	k & \text{if } j = 0, \\	
	\ell + 1 & \text{if }\ell\ \text{is odd}, \\
	\ell - 1 & \text{if } \ell \ \text{is even}.
	 \end{array} \right.\]
 A simple argument yields that conditions (a), (b) and (c) in Lemma~\ref{downsetstrat} are satisfied and that the only fixed point of $\varphi$
  is $0$ (see Figure~\ref{fig:involution}). Thus, a direct application of Lemma~\ref{downsetstrat} yields that $A$ has a winning strategy.
  
  \begin{figure}[ht]
    \centering
  \includegraphics[scale=1]{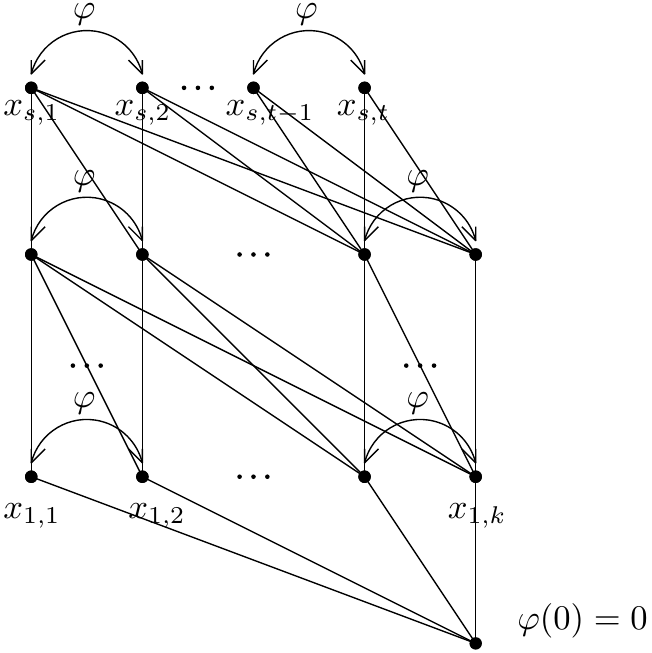}
  \caption{Involution $\varphi: {\rm Ap}(\mS, a) \rightarrow {\rm Ap}(\mS,a)$ in the proof of Proposition~\ref{juegaaaritm} (with $s := \lceil \frac{a-1}{k} \rceil$, $a$ odd and $k$ even).}
  \label{fig:involution}
  \end{figure}
  If $a$ is even, that is, $t$ is odd, we are going to show that $y = x_{s,t}$ with $s = \left\lceil \frac{a-1}{k} \right\rceil$ is a winning move for $B$. Indeed, the same involution $\varphi$
  as before  defined in ${\rm Ap}(S,a) \setminus \{y\}$ proves that $B$ has a winning strategy in this poset. This proves the result.
 \end{proof}

As a consequence of this result, one can completely determine who wins chomp on $\mS$ when $\mS$ is generated by a generalized arithmetic sequence of three elements. 

\begin{theorem}\label{genarit3gen}Let $h, a, d \in \Z^+$ with $a,d$ relatively prime integers and consider $\mS := \langle a, ha+d, ha+2d \rangle$. Player $A$ has a winning strategy for chomp on $\mS$ if and only if $a$ is odd. 
\end{theorem}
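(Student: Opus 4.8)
The semigroup $\mS = \langle a, ha+d, ha+2d\rangle$ is generated by a generalized arithmetic sequence with $k=2$, so the whole machinery of Section~\ref{sec:arithmetic} applies with $k=2$. Note that $k=2$ is even, hence Proposition~\ref{juegaaaritm} tells us immediately that $a$ itself is a winning first move for $A$ precisely when $a$ is odd. This settles the implication $(\Leftarrow)$ at once: if $a$ is odd, then picking $a$ is a winning first move, so $A$ has a winning strategy. The entire difficulty of the theorem therefore lies in the converse, and more precisely in showing that when $a$ is \emph{even} no first move whatsoever works for $A$.

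\textbf{The converse.} For the direction $(\Rightarrow)$ I would argue the contrapositive: assuming $a$ is even, I want to exhibit a winning strategy for $B$ against \emph{every} possible first move of $A$. The plan is to reuse the involution method of Lemma~\ref{downsetstrat} together with the explicit description of the poset ${\rm Ap}(\mS,a)$ from Proposition~\ref{aperyaaritmetica}, specialized to $k=2$. When $a$ is even and $k=2$ we have $t \equiv a-1 \equiv 1 \pmod 2$, so $t=1$, the type is $1$, and $\mS$ is in fact symmetric; Theorem~\ref{simetricoBgana} already guarantees that $B$ wins in this case. So in principle the symmetric case is done. However, to match the style of the paper and to get an \emph{explicit} strategy (rather than the non-constructive strategy-stealing of Theorem~\ref{simetricoBgana}), I would instead give $B$'s reply to each first move directly. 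The natural approach is: for a first move $x$ by $A$, describe the remaining poset ${\rm Ap}(\mS,x)$ (or rather, understand the finite poset that remains) and produce an involution $\varphi$ on it, or on it minus one well-chosen element $y$, satisfying conditions (a)--(d) of Lemma~\ref{downsetstrat} with only $0$ as fixed point, so that $B$ wins.

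\textbf{Key steps.} First, since $\mS$ has three generators and $k=2$, one must handle first moves $x$ of two types: multiples of $a$ (where ${\rm Ap}(\mS,x)$ is described via Proposition~\ref{aperyunion} and looks like a two-column ladder), and elements lying in higher layers. The cleanest route is probably to split according to whether $A$'s first move is $a$ itself, a larger multiple $ca$ of $a$, or an element not divisible by $a$. For the move $x=a$ the reasoning is exactly Proposition~\ref{juegaaaritm} with $a$ even: the involution $\varphi$ on ${\rm Ap}(\mS,a)$ defined there, restricted to ${\rm Ap}(\mS,a)\setminus\{y\}$ with $y=x_{s,t}$, gives $B$ a winning reply $y$. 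For a general first move one uses that the remaining poset is again an interval-like piece of the grid-structured Ap\'ery poset, on which a pairing involution analogous to $\varphi$ can be built and the single unpaired top element handed to $B$ as the reply. The main obstacle I anticipate is the bookkeeping of verifying condition (c) of Lemma~\ref{downsetstrat} for the involution on these remaining posets and checking that after deleting $B$'s reply the set of fixed points is a down-set equal to $\{0\}$; this is a finite but slightly delicate case analysis driven by the ordering rule $x_{j,\ell} <_{\mS} x_{j',\ell'} \iff j<j'$ and $\ell \ge \ell'$ from Proposition~\ref{aperyaaritmetica}. Once the involutions are in place, Lemma~\ref{downsetstrat} converts each into the desired statement that $A$ has no winning first move, completing the proof that $A$ wins if and only if $a$ is odd.
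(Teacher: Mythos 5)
Your proof is correct and is essentially the paper's own argument: for $k=2$ Proposition~\ref{aperyaaritmetica} gives ${\rm type}(\mS)=t\in\{1,2\}$ with $t\equiv a-1\pmod 2$, so $a$ even forces $t=1$, i.e.\ $\mS$ is symmetric and $B$ wins by Theorem~\ref{simetricoBgana}, while for $a$ odd Proposition~\ref{juegaaaritm} gives $a$ as a winning first move. The additional programme you sketch of building explicit involutions for every possible first move is unnecessary for the theorem (and is left unfinished), since the symmetric case already disposes of the converse; it is only relevant if one insists on a constructive strategy for $B$, which the statement does not require.
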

\begin{proof}We set $t := {\rm type}(\mS)$. By Proposition~\ref{aperyaaritmetica}, we have that $t \in \{1,2\}$ and $t \equiv a-1\, ({\rm mod}\, 2)$. Hence, 
$\mS$ is symmetric if and only if $a$ is even and, in this case, $B$ has a winning strategy by Theorem~\ref{simetricoBgana}. Whenever $a$ is odd, then Proposition~\ref{juegaaaritm} yields  that $a$ is a winning first move for player $A$. 
 \end{proof}

For three-generated numerical semigroups generated by a generalized arithmetic sequence we have proved that either the multiplicity of the semigroup is a winning first move for $A$, or $B$ has a winning strategy. This is no longer the case for general three-generated numerical semigroups. for instance, an exhaustive computer aided search shows that for $\mS = \langle 6,7,11 \rangle$, the smallest winning first move for $A$ is $25$, and for $\mS = \langle 6, 7, 16 \rangle$ the smallest winning first move for $A$ is 20. It would be interesting to characterize which player has a winning strategy when $\mS$ is three generated.

\section{Numerical semigroups generated by an interval}\label{sec:intervals}

In this section we are going to study chomp on numerical semigroups generated by an interval of positive integers, that is, when $\mS = \langle a,\ldots,a+k \rangle$ for some $a, k \in \Z^+$. These semigroups were studied in detail in~\cite{GRconsec} and form a subfamily of those generated by a generalized arithmetic sequence. Hence, the results obtained in the previous section are also valid in this context. That is, $a$ is a winning first move for $A$ in $\mS = \langle a,\ldots,a+k \rangle$ if and only if $a$ is odd and $k$ is even (Proposition~\ref{juegaaaritm}) and $A$ wins in $\mS = \langle a, a+1 ,a+2 \rangle$ if and only if $a$ is odd (Theorem~\ref{genarit3gen}). Moreover, $\mS = \langle a,\ldots,2a-1 \rangle$ is of maximal embedding dimension, thereby, in this case $A$ has a winning strategy if and only if $a$ is odd (Theorem~\ref{medchomp}). Finally, the semigroups such that $a-2$ is a multiple of $k$, such as $\mS = \langle a,\ldots,2a-2 \rangle$ and $\mS = \langle a, a+1 \rangle$, are symmetric and therefore $B$ has a winning strategy (Theorem~\ref{simetricoBgana}). 

In this section we will extend this list of results to two infinite families. Furthermore, we have implemented an algorithm that 
receives as input a numerical semigroup $\mS$ given by its generators and a positive integer $x$, and tests if player $A$ has a winning strategy with a first move less or equal than $x$. 
Our implementation consists of an exhaustive brute force implementation that checks for all $y \in \mS,\, y \leq x$ if player $A$ has a winning strategy starting with $y$ (we recall that after the first move, the remaining poset is finite). Due to 
the large number of cases to consider, our naive implementation can only handle instances where $x$ is small in a reasonable time. However, the computer search is used for solving one case of Proposition~\ref{tipo2consec}. Moreover, our experiments provide computational evidence for what in in our opinion would be the next feasible questions to attack, concerning chomp on numerical semigroups.

The following result shows that, in semigroups of the form $\mS = \langle 3k, 3k+1,\ldots, 4k \rangle$ for $k$ odd, playing $3k+1$, the second generator, is a winning first move for $A$. 
Note that $3k$, the first generator, is not a winning move  by Proposition~\ref{juegaaaritm}. One member of this family after $A$ picked $3k+1$ is displayed in Figure~\ref{fig:ap16en1520}.

\begin{figure}[ht]
  \centering
\includegraphics[scale=.9]{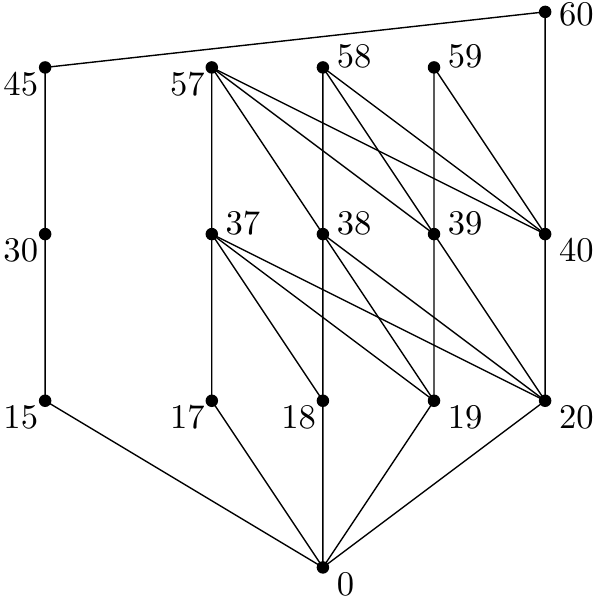}
\caption{${\rm Ap}(\mS,16)$ for $\mS = \langle 15, 16, 17, 18, 19, 20 \rangle$.}\label{fig:ap16en1520}
\end{figure}

\begin{prop}\label{juegaa+1aritm}Let $k\geq 3$ and $\mS = \langle 3k, 3k+1,\ldots, 4k \rangle$. If $k$ is odd, then $3k + 1 \in \mS$ is a winning first move in the chomp game on $(\mS,\lS)$.
\end{prop}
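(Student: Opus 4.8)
The first move $3k+1$ leaves, by Remark~\ref{aperychomp}, the finite poset $(\mathrm{Ap}(\mS,3k+1),\lS)$, on which $A$ now plays \emph{second}; so the plan is to show that the player to move on this poset loses. I would begin by describing the poset explicitly. Since a sum of exactly $p$ generators fills the interval $[3pk,4pk]$, one has $\mS=\{0\}\cup\bigcup_{p\ge1}[3pk,4pk]$, and testing which residues modulo $3k+1$ occur first yields $\mathrm{Ap}(\mS,3k+1)=\{0\}\cup L_1\cup L_2\cup L_3$, where $L_p:=\{3pk\}\cup\{(4p-1)k+2,\dots,4pk\}$ has exactly $k$ elements (so the total is $3k+1$, as it must be). I would then read the order off from when a difference lands in some $[3pk,4pk]$: each $L_p$ is an antichain; the \emph{spine} elements $v_p:=3pk$ satisfy $v_1<_{\mS}v_2<_{\mS}v_3$; the \emph{upper} elements $w_p^{(r)}:=(4p-1)k+r$ ($2\le r\le k$) obey $w_i^{(r)}<_{\mS}w_j^{(s)}\iff i<j$ and $s\le r$; and the single element $12k=w_3^{(k)}$ lies \emph{above the entire spine}.

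The backbone of the argument is that the upper part already favours the second player. The sub-poset on $\{0\}\cup\{w_p^{(r)}\}$ has exactly the shape of Proposition~\ref{aperyaaritmetica} (three rows, columns $2,\dots,k$, with order $i<j$, $s\le r$), and because $k$ is odd the number of columns $k-1$ is even. Hence the column-pairing involution of Proposition~\ref{juegaaaritm} --- matching $(2,3),(4,5),\dots,(k-1,k)$ inside each row --- pairs up all upper elements and, via Lemma~\ref{downsetstrat}, witnesses that the second player wins on $\{0\}\cup\{w_p^{(r)}\}$. The goal is to promote this into a strategy for $A$ on all of $\mathrm{Ap}(\mS,3k+1)$: $A$ answers any move among the $w_p^{(r)}$ by its column-partner, and answers moves on the spine by a tailored reply, so that after each pair of moves the position is again a second-player win.

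The hard part will be the element $12k=w_3^{(k)}$, which belongs to the upper part yet dominates the whole spine; this coupling is precisely what a naive strategy cannot handle and what forces the hypothesis ``$k$ odd''. Concretely, if $A$ simply fixes the spine and pairs columns, then condition (c) of Lemma~\ref{downsetstrat} fails at the relations $v_p<_{\mS}12k$, since the partner $w_3^{(k-1)}$ of $12k$ dominates no $v_p$ and no other upper element lies above a spine element, so neither disjunct of (c) can hold. I therefore expect the real work to be a careful interleaving of the two regions: either to build a corrected involution in which the three spine elements are paired with suitable upper elements of $L_1,L_2$ (all incomparable to the $v_p$) and the partner of $12k$ is re-chosen, verifying (c) throughout; or, more robustly, to give an explicit case analysis of $A$'s reply to each first move of $B$ organised by the layer in which $B$ plays, in the spirit of the proof of Theorem~\ref{medchomp}. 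A genuine parity constraint must be respected in either route: since $|\mathrm{Ap}(\mS,3k+1)|=3k+1$ is even, any fixed-point down-set $F$ of an involution must have even size, yet an antichain-with-minimum is a second-player win only for an \emph{odd} number of elements, so $F$ cannot be taken to be an antichain and the correct choice is a more structured even second-player-win down-set. In all cases $12k$ is played against a spine move at the decisive moment, and the oddness of $k$ (equivalently the evenness of $k-1$) is exactly what makes the final parity count fall in $A$'s favour; verifying that $A$ never runs out of legal replies --- in particular that the removal of $12k$ forced by a low spine move is always compensated --- is the crux of the proof.
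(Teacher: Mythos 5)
Your setup is sound: the computation of ${\rm Ap}(\mS,3k+1)$, the description of the order (three layers of ``upper'' elements ordered as in Proposition~\ref{aperyaaritmetica}, a spine $3k<_{\mS}6k<_{\mS}9k$ attached to the rest of the poset only through $12k$), and the diagnosis that a naive column-pairing involution with the spine fixed violates condition (c) of Lemma~\ref{downsetstrat} precisely at the relations $v_p<_{\mS}12k$ --- all of this matches the paper (Lemma~\ref{lemita} and Figure~\ref{fig:phi2}). Two small slips: $L_3$ is not an antichain, since $9k<_{\mS}12k$ (you contradict this yourself one clause later); and your first proposed repair, pairing the three spine elements with upper elements of $L_1,L_2$, cannot work for parity reasons you partly note --- it would leave $3(k-2)$ unmatched upper elements, an odd number when $k$ is odd, so no involution of that shape exists.

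The genuine gap is that the proof is never carried out: both of your ``routes'' are left as descriptions of work to be done, and the decisive choices are missing. Concretely, what is needed is (i) the right fixed-point down-set $F$, and (ii) a proof that the second player wins on $F$. The paper takes $F=\{0,3k,6k,9k\}\cup\{4k-1,4k,8k-1,8k,12k-1,12k\}$, i.e.\ the spine together with the two rightmost columns $r\in\{k-1,k\}$ of every layer, and pairs the remaining columns as $(2,3),(4,5),\dots,(k-3,k-2)$ (possible exactly because $k$ is odd). This $F$ is a down-set of even size $10$, it absorbs the troublesome element $12k$ as a fixed point so that condition (c) holds, and it is \emph{not} an antichain-with-minimum --- consistent with your parity observation. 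The remaining content, which your proposal does not supply in any form, is Lemma~\ref{poset10elem}: an explicit case-by-case pairing of first moves and replies showing that this particular $10$-element poset is a second-player win. Without exhibiting a concrete $F$ with verified properties and analysing the game on it, the argument establishes only that the obvious strategy fails, not that $3k+1$ is a winning move.
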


We are going to prove this proposition by means of Lemma~\ref{downsetstrat} together with the following technical lemmas.

\begin{lemma}\label{lemita}Let $\mS = \langle 3k, 3k+1,\ldots, 4k \rangle$. Then, 
\[ 
	{\rm Ap}(\mS,3k+1) = \{0,3k,6k,9k\} \cup \{3k+i, 7k + i, 11k + i \, \vert \, 2 \leq i \leq k\}. 
\]
As a consequence, ${\rm type}(\mS) = k-1$.
\end{lemma}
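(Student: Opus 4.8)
The plan is to compute ${\rm Ap}(\mS,3k+1)$ directly as the set of minimal elements of $\mS$ in each residue class modulo $3k+1$, and then read off the type from the maximal elements of the resulting poset via Corollary~\ref{typeinvariant}. First I would record the membership rule for interval-generated semigroups (see~\cite{GRconsec}): a positive integer $n$ lies in $\mS = \langle 3k,\ldots,4k\rangle$ if and only if $n \in [3kp,4kp]$ for some $p \geq 1$, since $p$ generators from the interval $[3k,4k]$ realize exactly every integer of $[3kp,4kp]$. In the range we care about this says that $\mS \cap [0,9k]$ consists of $\{0\}$ together with $[3k,4k] \cup [6k,8k] \cup \{9k\}$, so that $(4k,6k)$ and $(8k,9k)$ are the only relevant gaps, while $[9k,\infty) \subseteq \mS$. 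I would also recall that $s \in {\rm Ap}(\mS,3k+1)$ precisely when $s \in \mS$ and $s-(3k+1)\notin \mS$, equivalently when $s$ is the least element of $\mS$ in its class modulo $3k+1$.

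Writing $T$ for the candidate set on the right-hand side, I would then check three things. (i) $T$ is a complete residue system modulo $3k+1$: a short computation gives residue $0$, then $1,\ldots,k-1$ from the $3k+i$, then $k,\ldots,2k-2$ from the $7k+i$, then $2k-1,\ldots,3k-3$ from the $11k+i$, and finally $3k-2,3k-1,3k$ from $9k,6k,3k$; these partition $\{0,1,\ldots,3k\}$ and $|T| = 4 + 3(k-1) = 3k+1$. (ii) Every element of $T$ lies in $\mS$, using the interval rule: $3k+i \in [3k,4k]$, $7k+i \in [6k,8k]$, $11k+i \in [9k,12k]$, and $0,3k,6k,9k \in \mS$. (iii) Subtracting $3k+1$ leaves $\mS$ in every case: the differences are negative for $0$ and $3k$; they equal $3k-1$ and $i-1 \in (0,3k)$ (below the multiplicity $3k$) for $6k$ and the $3k+i$; they equal $6k-1$ and $4k+i-1 \in (4k,6k)$ for $9k$ and the $7k+i$; and they equal $8k+i-1 \in (8k,9k)$ for the $11k+i$. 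Hence each element of $T$ is the minimal representative of its class, and since $T$ is a complete residue system it equals ${\rm Ap}(\mS,3k+1)$.

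Finally, for the type I would determine the maximal elements of $({\rm Ap}(\mS,3k+1),\leq_{\mS})$. The elements $11k+i$ with $2 \leq i \leq k$ are pairwise incomparable, since the difference of two of them has absolute value at most $k-2 < 3k$ and so is not a nonzero element of $\mS$; the same estimate shows none of them lies below $12k$. On the other hand every remaining element is dominated by one of them: checking that $(11k+2)-(3k+i) = 8k+2-i \in [7k+2,8k] \subseteq [6k,8k]$ and $(11k+2)-(7k+i) = 4k+2-i \in [3k+2,4k] \subseteq [3k,4k]$ places all of the lower two rows strictly below $11k+2$, while $12k-3k = 9k$, $12k-6k = 6k$ and $12k-9k = 3k$ all lie in $\mS$, placing $3k,6k,9k$ below $12k$. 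Thus the maximal elements are exactly $\{11k+i \mid 2 \leq i \leq k\}$, of which there are $k-1$, and Corollary~\ref{typeinvariant} yields ${\rm type}(\mS) = k-1$.

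The main obstacle is purely the bookkeeping: organizing the three rows so that the membership checks in (ii), the lands-in-a-gap checks in (iii), and the domination checks for the type all reduce to locating a handful of explicit integers relative to the intervals $[3kp,4kp]$. The only genuinely delicate point is the type computation, where one must verify not merely the values but the order relation $\leq_{\mS}$, so I would take care to exhibit, for each non-maximal element, an explicit maximal element above it.
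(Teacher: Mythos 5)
Your proof is correct and follows essentially the same route as the paper: exhibit the candidate set, verify it consists of $3k+1$ elements of $\mS$ each of which lands outside $\mS$ after subtracting $3k+1$, conclude equality with the Ap\'ery set, and then read off the type from the maximal elements $\{11k+i \mid 2 \leq i \leq k\}$ via Corollary~\ref{typeinvariant}. You merely supply more detail than the paper does (the explicit residue bookkeeping and the domination checks for maximality), all of which checks out.
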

\begin{proof} We begin by observing that $\mS = \{3k,\ldots,4k\} \cup \{6k,\ldots,8k\} \cup \{x \in \N \, \vert \, x \geq 9k\}$.
Let $A := \{0,3k,6k,9k\} \cup \{3k+i, 7k + i, 11k + i \, \vert \, 2 \leq i \leq k\}$. Since $A \subset \mS$ and
has $3k+1$ elements, to prove that $A = {\rm Ap}(\mS,3k+1)$ it suffices to observe that $x - (3k+1) \notin \mS$ for all $x \in A$. The set of
 maximal elements of $A$ with respect to $\lS$  is $\{11k+i \, \vert \, 2 \leq i \leq k\}$. Thus, by Corollary \ref{typeinvariant} we have that ${\rm type}(\mS) = k-1$.
 \end{proof}

\begin{lemma}\label{poset10elem}Player $B$ has a winning strategy in the poset $(P, \leq)$ with $P = \{0\} \cup \{x_{i,j} \, \vert \, 1 \leq i,j \leq 3\}$ 
and relations induced by 
\[ \begin{array}{llll} 
0 \leq x_{1,j}, & {\text for} &  j = 1,2,3, \\
x_{i,j} \leq x_{i+1,j}, & {\text for} & i = 1,2, &  j = 1,2,3, \\
x_{i,3} \leq x_{i+1,2}, &{\text for} & i = 1,2,  \\
x_{3,1} \leq x_{3,3}.
\end{array} \]  
\end{lemma}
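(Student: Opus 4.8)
The plan is to show directly that $P$ is a loss for the player who moves first, which is exactly the claim that $B$ wins. Every position reachable during the game is an order ideal of $P$ (picking $x$ deletes the up-set of $x$, so a down-set remains), and $P$ is finite, so the natural device is backward induction. I will call an order ideal \emph{losing} if the player to move on it has no winning strategy and \emph{winning} otherwise; then $B$ wins chomp on $P$ if and only if $P$ is losing, and an ideal is winning precisely when at least one move turns it into a losing ideal. The goal is thus to prove that $P$ is losing.

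First I would record the principal up-sets of the nine generators $x_{i,j}$ (for instance $U(x_{1,1})=\{x_{1,1},x_{2,1},x_{3,1},x_{3,3}\}$ and $U(x_{1,3})=\{x_{1,3},x_{2,3},x_{3,3},x_{2,2},x_{3,2}\}$), which encode the effect of each move. Two observations frame the argument. The maximal elements of $P$ are $x_{3,2}$ and $x_{3,3}$, whose principal ideals have different sizes ($6$ and $7$), so no automorphism exchanges them; moreover $P$ has an \emph{odd} number of non-minimal elements. Hence one cannot finish by a single global involution with fixed-point set $\{0\}$ as in Lemma~\ref{downsetstrat}: there is no fixed-point-free pairing of the nine elements, and the natural candidate pairings fail condition (c). This is why the proof cannot be a one-line application of that lemma.

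Accordingly, the heart of the argument is to give, for each of the nine possible opening moves of $A$, an explicit reply for $B$ that reaches a losing ideal, and then to certify those ideals as losing. A short list of \emph{standard} losing ideals suffices. Two of them are losing for structural reasons and are handled by a mirroring strategy, that is, by Lemma~\ref{downsetstrat} with fixed-point set $\{0\}$: an antichain of even size over the minimum, and any ideal that decomposes as two isomorphic posets glued along $0$ (such as two disjoint $2$-chains), where $B$ simply copies $A$'s move in the other copy. The remaining standard positions are genuinely asymmetric --- the crossed configuration $\{x_{1,2}<x_{2,2},\,x_{1,3}<x_{2,3},\,x_{1,3}<x_{2,2}\}$, a $3$-chain together with a $2$-chain and an isolated atom, and a $2$-chain together with a ``$V$'' --- and for these I would verify the losing property by a short backward induction, checking that every one of their moves lands in a winning ideal.

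The main obstacle is exactly this bookkeeping. Because of the relations $x_{i,3}\le x_{i+1,2}$ and $x_{3,1}\le x_{3,3}$, the ideals produced along the way are not merely disjoint unions of chains, so one must track a moderate number of non-isomorphic small posets (chains of length up to three combined with the $V$-shaped and crossed pieces above) and classify each as winning or losing. Since all of these posets are small the induction terminates immediately; alternatively, as $P$ has only ten elements, the whole classification is confirmed by the finite brute-force search used in this section. Either way one checks that none of $A$'s nine opening moves reaches a losing ideal, so $P$ itself is losing and $B$ has a winning strategy.
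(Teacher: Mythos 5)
Your proposal is correct and follows essentially the same route as the paper: the paper's proof is precisely a list of replies for $B$ to each of $A$'s nine opening moves ($x_{1,1}\leftrightarrow x_{3,2}$, $x_{2,1}\leftrightarrow x_{1,3}$, $x_{3,1}\leftrightarrow x_{1,2}$ or $x_{2,3}$, $x_{2,2}\leftrightarrow x_{3,3}$), leaving the verification that the resulting ideals are losing to the reader, whereas you spell out that verification (your five target ideals are exactly the ones these replies produce) while leaving the replies themselves implicit. Your structural remarks --- that the odd number of non-minimal elements rules out a fixed-point-free involution, so Lemma~\ref{downsetstrat} cannot be applied directly --- are correct and accurately explain why a direct case analysis is needed here.
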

\begin{proof}
We will exhibit a winning answer for any first move of $A$ in this poset (see Figure~\ref{fig:poset10elements}).
\begin{figure}[ht]  \centering
\includegraphics[scale=.9]{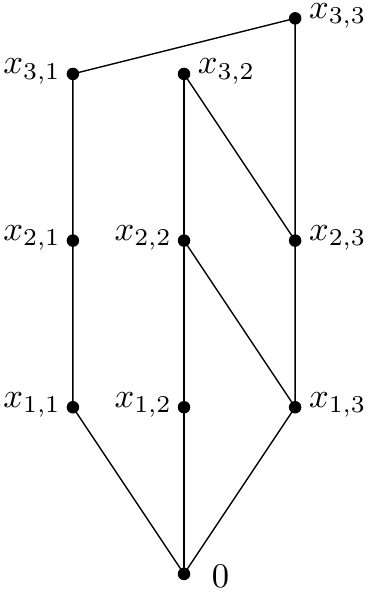}
\caption{The poset of Lemma~\ref{poset10elem}.}\label{fig:poset10elements}
\end{figure}
\begin{itemize}
\item If $A$ picks $x_{1,1}$, then $B$ picks $x_{3,2}$ and vice versa.
\item If $A$ picks $x_{2,1}$, then $B$ picks $x_{1,3}$ and vice versa.
\item If $A$ picks $x_{3,1}$, then $B$ can either pick $x_{1,2}$ or $x_{2,3}$ and vice versa.
\item If $A$ picks $x_{2,2}$, then $B$ picks $x_{3,3}$ and vice versa.
\end{itemize}
Hence, $B$ has a winning strategy.
 \end{proof}

\begin{proof}[of Proposition~\ref{juegaa+1aritm}.] Assume that $k$ is odd. We consider $\varphi$ defined as (see Figure \ref{fig:phi2}). 
\[
	\varphi(x) = \left\{ \begin{array}{lll} 	x, & $ for $ & x \in  \{0, 3k, 6k, 9k, 4k-1,4k, 8k-1,8k, 12k-1,12k\}, \\
											x + 1, & $ for $ & x \in \{3k+i,7k+i, 10k+i\, \vert \, 2 \leq i \leq k-3,\, i \text{ even}\}  \text{,}\\ 
 											x - 1, & $ for $ & x \in \{3k+i,7k+i, 10k+i\, \vert \, 3 \leq i \leq k-2,\, i \text{ odd}\}.
 	\end{array} \right.
 \]
 \begin{figure}[ht]
   \centering
 \includegraphics[scale=1]{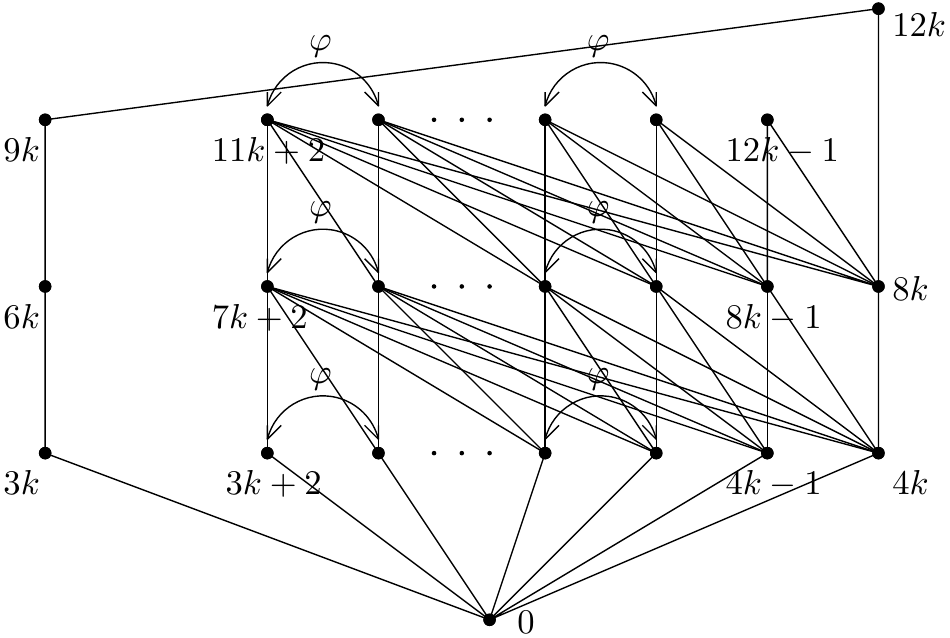}
 \caption{Involution of Proposition~\ref{juegaa+1aritm}.}\label{fig:phi2}
\end{figure}
 With Lemma~\ref{lemita} it is easy to see that $\varphi$ is an involution of ${\rm Ap}(\mS, 3k+1)$.
 This involution satisfies the hypotheses of Lemma~\ref{downsetstrat}. Hence, $3k+1$ is a winning move for $A$ if and only if 
 there is a winning strategy for the second player in the poset $(F,\lS)$, where $F$ is 
 the set of fixed points of $\varphi$. But, $F = \{0, 3k, 6k, 9k, 4k-1, 4k, 8k-1, 8k, 12k-1, 12k\}$
 and $(F, \lS)$ is isomorphic to the poset of Lemma~\ref{poset10elem}. Thus, $3k+1$ is a winning move for player $A$.
 \end{proof}

As stated in the beginning of the section we know the behavior of chomp on $\mS = \langle a, \ldots, 2a-1 \rangle$ and $\mS = \langle a, \ldots, 2a-2 \rangle$. The following result characterizes when $A$ has a winning strategy for chomp on $\mS = \langle a, \ldots, 2a-3 \rangle$.

\begin{prop}\label{tipo2consec}Let $\mS = \langle a, a+1,\ldots, 2a-3\rangle$ with $a \geq 4$. Player $A$ has a winning strategy for chomp on $\mS$ if and only if
$a$ is odd or $a = 6$.
\end{prop}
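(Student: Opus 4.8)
The plan is to analyze chomp on $\mS = \langle a, a+1, \ldots, 2a-3\rangle$ by first understanding the relevant Ap\'ery sets, then splitting into cases according to the parity of $a$ and the exceptional value $a=6$. Since this is a semigroup generated by an interval (hence by a generalized arithmetic sequence with $h=1$, $d=1$, $k = a-3$), Proposition~\ref{aperyaaritmetica} describes ${\rm Ap}(\mS,a)$ and its order structure explicitly. The first step is to compute, for this specific $k = a-3$, what the poset ${\rm Ap}(\mS,a)$ looks like and to record that ${\rm type}(\mS) = t$ where $t \equiv a-1 \pmod{a-3}$, i.e. $t \equiv 2 \pmod{a-3}$, so generically $t=2$ and the semigroup is \emph{not} symmetric (the symmetric case $a \equiv 2 \pmod k$ does not occur here for $a \geq 6$). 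This rules out an immediate application of Theorem~\ref{simetricoBgana} and signals that genuine strategy analysis is needed.

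\textbf{Odd case.} When $a$ is odd, Proposition~\ref{juegaaaritm} applies directly: here $k = a-3$ is even and $a$ is odd, so $a$ itself is a winning first move for $A$. This direction requires essentially no new work beyond citing the earlier proposition, so I expect it to be short.

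\textbf{Even case.} The substantive content is showing that when $a$ is even, $A$ has a winning strategy if and only if $a = 6$. For $a = 6$ one has $\mS = \langle 6,7,8,9\rangle$, a small explicit semigroup; I would exhibit an explicit winning first move for $A$ and verify it either by a hand analysis of ${\rm Ap}(\mS, y)$ for a cleverly chosen $y$, or by the brute-force computer search the authors describe (the text signals that ``the computer search is used for solving one case of Proposition~\ref{tipo2consec}'', so $a=6$ is presumably that case). For the remaining even $a \neq 6$, I must prove $B$ wins, i.e.\ that \emph{no} first move of $A$ succeeds. The natural tool is Lemma~\ref{downsetstrat}: for each candidate first move $y \in \mS$, after $A$ plays $y$ the remaining poset is ${\rm Ap}(\mS,y)$, and I would construct an involution $\varphi$ on ${\rm Ap}(\mS,y)$ satisfying conditions (a)--(d) whose fixed-point set $F$ is a down-set on which $A$ (now the second player to move within $F$) loses. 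Concretely, I would pair up most elements of each Ap\'ery set by an involution analogous to the $\ell \mapsto \ell \pm 1$ pairing used in Proposition~\ref{juegaaaritm}, reducing each game to a small fixed-point poset and then checking directly that the parity works out against $A$.

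\textbf{Main obstacle.} The hard part will be the even case with $a \neq 6$: proving a \emph{negative} statement about all first moves simultaneously. Unlike the odd case, where one good first move suffices, here I must produce, uniformly in the (arbitrarily large) first move $y$ and in $a$, an involution-based argument via Lemma~\ref{downsetstrat} that reduces every resulting poset to a manageable fixed-point structure, and then carry out a parity/small-poset analysis showing $B$ wins each reduced game. The layered structure of ${\rm Ap}(\mS, \lambda a)$ (as exploited in Theorem~\ref{medchomp}) together with Proposition~\ref{aperyunion} should let me handle the generic moves $y = \lambda a$, while the moves of the form $y = \lambda a + a_i$ (picking a higher generator) will likely require separate, more delicate involutions, and isolating exactly why $a=6$ escapes this pattern is the crux of the argument.
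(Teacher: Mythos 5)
Your treatment of the easy cases matches the paper: for $a$ odd you invoke Proposition~\ref{juegaaaritm} (here $k=a-3$ is even), for $a=6$ the paper indeed relies on the exhaustive computer search (winning first move $36$), and $a=4$ gives the symmetric semigroup $\langle 4,5\rangle$. The problem is the core of the statement, namely that $B$ wins for even $a\geq 8$, where your proposal is only a plan, and the plan as stated is unlikely to go through. You propose to fix, for each first move $y$, a static involution on ${\rm Ap}(\mS,y)$ satisfying the hypotheses of Lemma~\ref{downsetstrat} and then analyze the fixed-point poset. The paper does something structurally different: it partitions the whole semigroup into blocks $\mS_i=[a+i(2a+1),\,3a+i(2a+1)]\cap\mS$ and gives a \emph{dynamic} response strategy for $B$ maintaining three invariants, the crucial one being condition (c): $B$ must never leave a position in which $(i,0),\dots,(i,a-4)$ have been removed but $(i,a-3)$ survives, because then $(i-1,2a-1)$ in the \emph{previous} block becomes a winning move. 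This cross-block interaction (made explicit in case (6.2), where $B$'s pairing even involves an element of the next block $\mS_{i+1}$ and depends on what has already happened there) is exactly the kind of adaptivity that a single involution fixed in advance on ${\rm Ap}(\mS,y)$ cannot capture: which element $B$ should pair with $(i,2a-1)$ depends on the history of play, not just on the poset. So the missing idea is not merely bookkeeping; it is the block decomposition together with the invariant (c), and without it the negative direction is not established.

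A secondary point: even if per-move involutions existed, you would still have to verify them uniformly over \emph{all} first moves $y$ (not just $y=\lambda a$ and $y=\lambda a+a_i$ in the sense of Theorem~\ref{medchomp}'s layers --- note this semigroup is not of maximal embedding dimension, so Proposition~\ref{medpropiedades}(e) is unavailable and elements do not decompose canonically), and you would need to explain precisely where the argument breaks for $a=6$; in the paper this is pinpointed to case (2), where $a\geq 8$ is needed to secure condition (c). As it stands, the hard direction of the proposition is not proved.
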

\begin{proof}If $a$ is odd, we are under the hypotheses of Proposition~\ref{juegaaaritm} and $a$ is a winning move for $A$.
If $a = 4$, then $\mS = \langle 4,5 \rangle$ is symmetric and $B$ has a winning strategy.  For $a = 6$, an exhaustive computer aided search shows that $36$ is a winning first
move for $A$, see Table~\ref{tab}.  

From now on we assume that $a$ is even and $a \geq 8$ and we are going to describe a winning strategy for $B$. First, we observe that for all $x \in \Z^+$ we have that $x \in \mS \Longleftrightarrow x \geq a$ and $x \notin \{2a-2,2a-1\}$. We partition $\mS$ into intervals $\mS_i := [a + i(2a+1),3a + i(2a+1)] \cap \mS$ for all $i \in \N$ and we denote by $(i,x)$ the element $a + i(2a+1) + x \in \mS_i$ for all $x \in \{0,\ldots,2a\}$ (see Figure~\ref{fig:si}).
\begin{figure}[ht]
  \centering
\includegraphics[scale=.7]{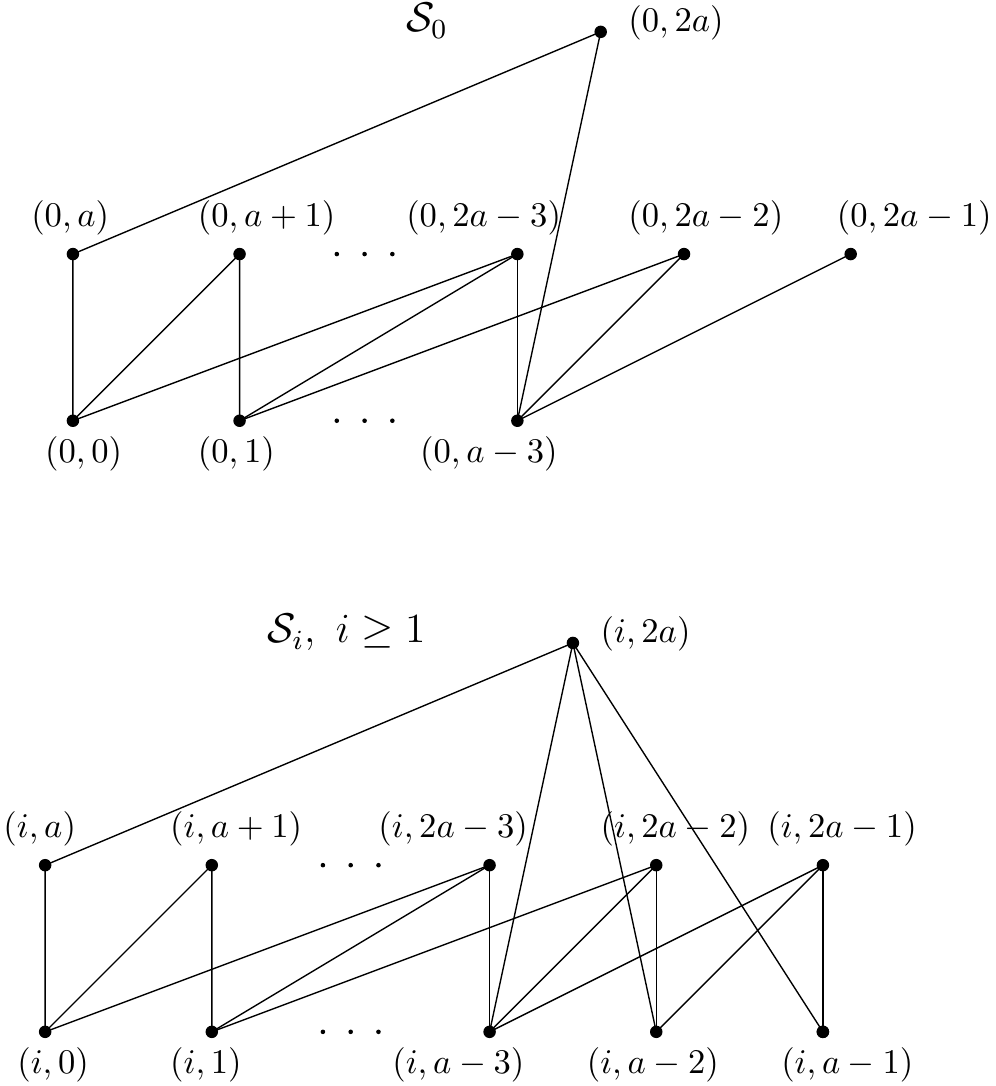}
\caption{$\mS_0$ and $\mS_i$ for $i \geq 1$.}\label{fig:si}
\end{figure}

We are going to exhibit a winning strategy for $B$ that respects the following rules.
\begin{itemize}
\item[(a)] $A$ is always forced to be the first to pick an element in $\mS_i$ for all $i$ (and will be finally forced to pick $0 \in \mS$).
\item[(b)] After $A$ picks the first element in $\mS_i$, $B$ answers with an element in $\mS_i$ and the remaining poset $P$ has at most one element in $\cup_{j > i} \mS_j$.
\item[(c)] $B$ avoids that after his move, the remaining poset $P$ satisfies that $(i,0), \ldots, (i,a-4) \notin P$ and $(i,a-3) \in P$ for every $i$.
\end{itemize}

Assume that $A$ picks an element for the first time in $\mS_i$ for some $i$. 
\begin{itemize}
\item[(1)] If $A$ picks $(i,0)$, then $B$ answers $(i,x)$ for any $x \in \{2,\ldots,a-2\}$; and vice versa. All these moves
leave the remaining poset 
\[ P = \{0\} \cup \bigcup_{j < i}\, \mS_j  \cup \{(i,y) \, \vert \, 1 \leq y \leq a-1, y \neq x\}. \] Hence, $P \cap \mS_i$ is just an antichain of $a-2$ elements; since $a-2$ is even, $B$ can easily satisfy (a). Moreover, in order to satisfy (c), in the next move player $B$ can pick $(i,a-3)$ if it had not been removed before.
\item[(2)] If $A$ picks $(i,a-1)$, then $B$ answers $(i,2)$. These moves leave the remaining poset 
\[ P = \{0\} \cup \bigcup_{j < i}\, \mS_j  \cup \{(i,y) \, \vert \, 0 \leq y \leq a+1, y \notin \{2,a-1\} \}. \]
Hence, $P \cap \mS_i$ is the poset of Figure~\ref{fig:caso2}. Since $a-4$ is even, $B$ can easily force $A$ to be the first to pick an element from $\mS_j$ for some $j < i$. Moreover, player $B$ can always assure condition (c) in his winning strategy; indeed, since $a \geq 8$, player $B$ can pick $(i,a-3)$ after the first time that $A$ picks $(i,y)$ for some $y \in \{3,\ldots,a-4,a-2\}$.
\begin{figure}[ht]
  \centering
\includegraphics[scale=1]{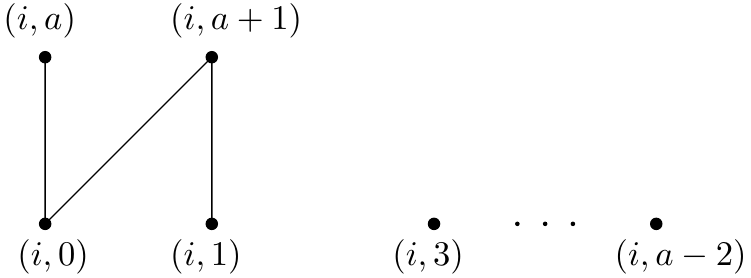}
\caption{$P \cap \mS_i$ in case (2).} \label{fig:caso2}
\end{figure}

\item[(3)] If $A$ picks $(i,1)$, then $B$ answers $(i,2a)$ and vice versa. These moves leave the remaining poset 
\[ P = \{0\} \cup \bigcup_{j < i}\, \mS_j  \cup \{(i,y) \, \vert \, y \in \{0,2,3,\ldots,a-1,a,2a-1\}\}. \]
Hence, $P \cap \mS_i$ is the poset of Figure~\ref{fig:caso3}. Again $B$ can easily satisfy (a) and (c).
\begin{figure}[ht]
  \centering
\includegraphics[scale=1]{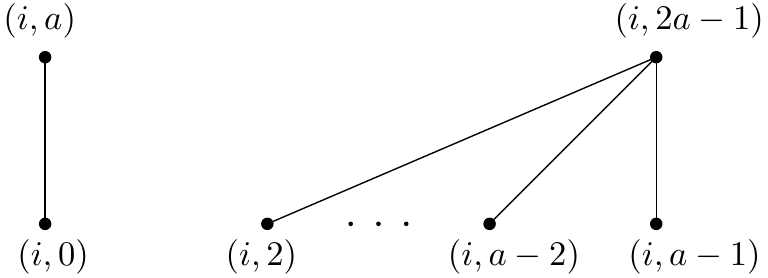}
\caption{$P \cap \mS_i$ in case (3).} \label{fig:caso3}
\end{figure}

\item[(4)] If $A$ picks $(i,a)$, then $B$ answers $(i,x)$ for any $x \in \{a+2,\ldots,2a-2\}$ and vice versa. These moves leave the remaining poset 
\[ P = \{0\} \cup \bigcup_{j < i}\, \mS_j  \cup (\mS_i \setminus \{(i,a),(i,2a),(i,x)\}. \]
Hence, $P \cap \mS_i$ is the poset of Figure~\ref{fig:caso4}. We are now providing a strategy for $B$ satisfying (a) and (c).
We divide the elements of $P \cap \mS_i$ into two sets, $L_1 := \{(i,y) \in P \cap \mS_i\, \vert \, y \leq a-1\}$ and $L_2 := \{(i,y) \in P \cap \mS_i\, \vert \, y \geq a\}$.
We observe that both $L_1$ and $L_2$ are of even size. If $A$ picks an element from $L_2$, then so does $B$. If $A$ picks an element from  $L_1$, then $B$ picks an element from $L_1$ such that after the two moves, every element of $L_2$ is removed (this can always be achieved by either picking $(i,1)$ or $(i,3)$). 

The second time that $A$ picks an element from $L_1$, $B$ picks $(i,a-3)$ if it has not been removed, yet. This assures (c). 
\begin{figure}[ht]
  \centering
\includegraphics[scale=.95]{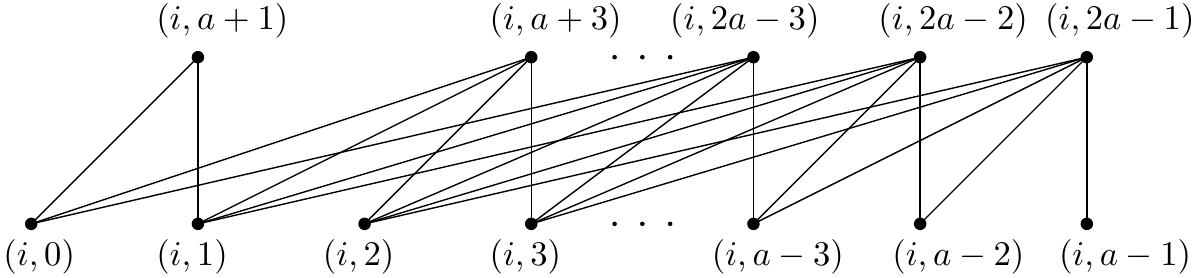}
\caption{$P \cap \mS_i$ in case (4) for $x = a+2$.} \label{fig:caso4}
\end{figure}

\item[(5)] If $A$ picks $(i,a+1)$, then $B$ answers $(i,a+3)$ and vice versa. These moves leave the remaining poset 
\[ P = \{0\} \cup \bigcup_{j < i}\, \mS_j  \cup (\mS_i \setminus \{(i,a+1),(i,a+3)\}. \]
Hence, $P \cap \mS_i$ is the poset of Figure~\ref{fig:caso5}. Let us see how $B$ can force (a) and (c).
If $A$ picks $(i,1)$ then $B$ answers $(i,2a)$ and vice versa; this leaves the poset in the same situation of case (3). If $A$ picks $(i,0)$; then $B$ can pick $(i,y)$ for any $y \in \{2,\ldots,a-1\}$ and vice versa; in this case the poset is left as in case (1). If $A$ picks $(i,a)$; then $B$ can pick $(i,y)$ for any $y \in \{a+2,a+4,\ldots,2a-1\}$ and vice versa; in this case the poset is left similar to  case (4) but with 2 elements less in $L_2$, the strategy for $B$ described in (4) also works here.  
\begin{figure}[ht]  \centering
\includegraphics[scale=1]{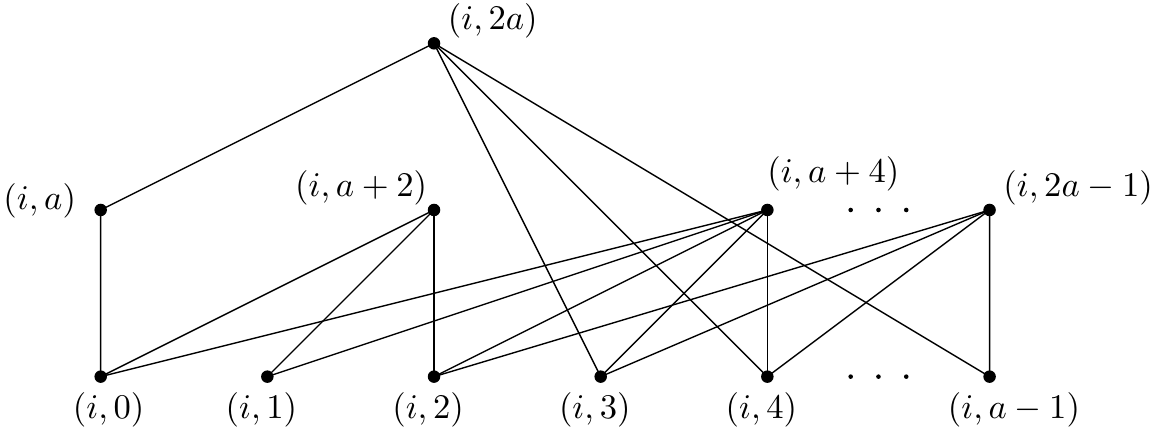}
\caption{$P \cap \mS_i$ in case (5).} \label{fig:caso5}
\end{figure}

\item[(6)] If $A$ picks $(i,2a-1)$, then we take $\lambda$ as the minimum value such that $(i+1,\lambda)$ is in the current poset. We separate two cases:
\begin{itemize} \item[(6.1)] If $\lambda$ does not 
exist or $\lambda \geq a-2$, then $B$ picks $(i,a)$ and the remaining poset is 
\[ P = \{0\} \cup \bigcup_{j < i}\, \mS_j  \cup (\mS_i \setminus \{(i,a),(i,2a-1),(i,2a)\}. \]
Hence, $P \cap \mS_i$ is the poset of Figure~\ref{fig:caso61}. In this case, the same strategy as in (4) works here to guarantee (a) and  (c).
\begin{figure}[ht]  \centering
\includegraphics[scale=1]{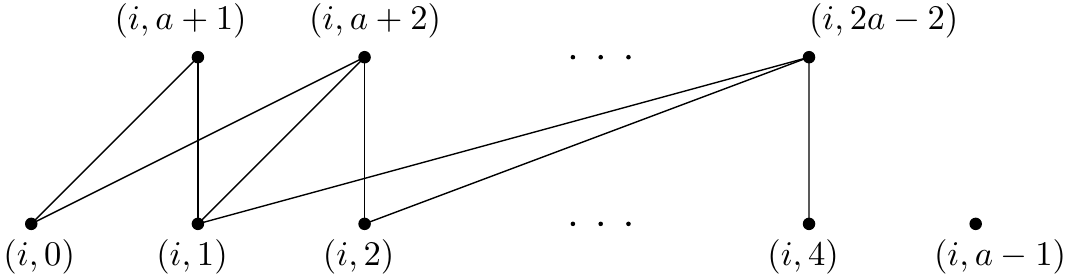}
\caption{$P \cap \mS_i$ in case (6.1).} \label{fig:caso61}
\end{figure}

\item[(6.2)] If $\lambda < a-2$, then $\lambda < a-3$ by condition (c) and $B$ can answer $(i,a+\lambda+2)$ because $a+\lambda+2 < 2a-1$; the remaining poset is 
\[ P = \{0\} \cup \bigcup_{j < i}\, \mS_j  \cup (\mS_i \setminus \{(i,a+\lambda+2),(i,2a-1)\} \cup \{(i+1,\lambda) \}. \]
Hence, $P' := P \cap (\mS_i \cup \mS_{i+1})$ is the poset of Figure~\ref{fig:caso62}. We partition $P'$ into three sets; $L_1 := \{(i,y) \in P'\, \vert \, 0 \leq y \leq a-1\}$,
$L_2 := \{(i,y) \in P'\, \vert \, a \leq y \leq 2a-2\}$ and $L_3 := \{(i,2a), (i+1,\lambda)\}$. Each of these three sets has an even number of elements and whenever $A$ picks an element from $L_i$, then $B$ can pick an element from $L_i$ such that $\cup_{j > i} L_j$ is completely  removed.  The only exception to this is if $A$ picks $(i,a-1)$, but in this case $B$ can pick $(i,2)$ and we are in case (2).
\begin{figure}[ht]  \centering
\includegraphics[scale=.9]{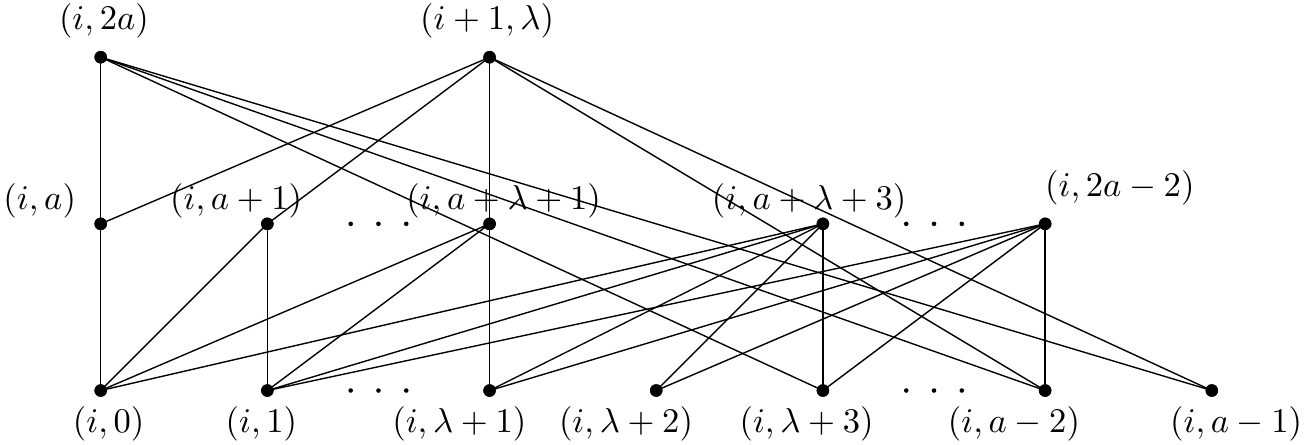}
\caption{$P \cap (\mS_i \cup \mS_{i+1})$ in case (6.2).} \label{fig:caso62}
\end{figure}

\end{itemize}
\end{itemize}
Thus, player $B$ has a winning strategy on $(\mS,\lS)$ when $\mS = \langle a,\ldots,2a-3\rangle$ with $a$ even and $a \geq 8$ and the proof is finished.
 \end{proof}

Condition (c) in the proof of Proposition~\ref{tipo2consec} might seem artificial but it is crucial for the proof. Indeed, if $(i,0), \ldots, (i,a-4) \notin P$, $(i,a-3) \in P$ and none of the players has picked an element from $\mS_{j}$ for all $j < i$; then $(i-1,2a-1) \in \mS_{i-1}$ is a winning move. In the proof, when we consider $a$ even and $a \geq 8$, the only place where
we use that $a \geq 8$ is to assure in case (2) that
the winning strategy can respect condition (c).  When $a = 6$ (and $\mS = \langle 6,7,8,9 \rangle$) this cannot be guaranteed.


\begin{rema}\label{table}
We illustrate in Table \ref{tab} which player has a winning strategy for interval generated semigroups with small multiplicity. 
The notation ${\mathbf B}$ means that player $B$ has a winning strategy because 
\begin{itemize} \item $\mS$ is symmetric (Theorem \ref{simetricoBgana}), 
\item $\mS$ is of maximal embedding dimension and $a$ even (Theorem \ref{medchomp}), or 
\item $k = a-3$ and $a \geq 8$ is even (Proposition \ref{tipo2consec}).
\end{itemize}

The notation ${\mathbf A}_{j}$ means that $A$ has a winning strategy whose first move is $j$, because 
\begin{itemize} 
\item[$\bullet$] $a$ odd and $k$ is even (Proposition \ref{juegaaaritm}); the subcases $a$ is odd and $k = a-1$ are also justified by Theorem \ref{medchomp},
\item[$\bullet$] $a = 3k$ and $k$ odd (Proposition \ref{juegaa+1aritm}), or
\item[$\bullet$] $k = a-3$ and $a$ is either odd or $a = 6$ (Proposition \ref{tipo2consec}). 
\end{itemize}

The notation ${\mathrm B}_{\leq i}$ means that an exhaustive computer search as mentioned in the beginning of the section shows that $B$ wins if $A$'s first move is $\leq i$. 
\end{rema}

A particular property of the semigroups considered in Proposition~\ref{tipo2consec} on which $B$ wins, is that they have type $2$ and $k$ is odd. Computational experiments suggest that when ${\rm type}(\mS) = 2$ and $k$ odd, then $B$ has a winning strategy in most cases, see Table~\ref{tab}. However, examples where this is not the case are given by $\mS = \langle 6,7,8,9 \rangle$ and $\langle 9,10,11,12 \rangle$, where in the latter $A$ wins playing $10$, by Proposition~\ref{juegaa+1aritm}. It would be interesting to completely characterize the cases where $A$ has a winning strategy in this setting.

\begin{table}
{\tiny 
\begin{tabular}{|c|c|c|c|c|c|c|c|c|c|c|c|c|c|c|c|}
\hline
$a\ \backslash\ k $ &  1 & 2 & 3 & 4 & 5 & 6 & 7 & 8 & 9 & 10 & 11 & 12 & 13 \\
\hline
2 &  $\mathbf{B}$ & & & & & & & & & & & & \\ \hline
3 &  $\mathbf{B} $ & $\mathbf{A}_{3} $  & & & & & & & & & & & \\ \hline
4 &  $\mathbf{B} $ & $\mathbf{B} $ & $\mathbf{B} $   & & & & & & & & & & \\ \hline
5 & $\mathbf{B} $ & $\mathbf{A}_{5} $ & $\mathbf{B} $ & $\mathbf{A}_{5} $   & & & & & & & & & \\ \hline
6 & $\mathbf{B} $ & $\mathbf{B} $ & $\mathbf{A}_{36} $ & $\mathbf{B} $ & $\mathbf{B} $   & & & & & & & & \\ \hline
7 & $\mathbf{B} $ & $\mathbf{A}_{7} $ & ${\mathrm B}_{\leq 49}$ & $\mathbf{A}_{7} $ & $\mathbf{B} $ & $\mathbf{A}_{7} $  & & & & & & & \\ \hline
8 & $\mathbf{B} $ & $\mathbf{B} $ & $\mathbf{B} $ & ${\mathrm B}_{\leq 43}$ & $\mathbf{B} $ & $\mathbf{B} $ & $\mathbf{B} $  & & & & & & \\ \hline
9 & $\mathbf{B} $ & $\mathbf{A}_{9} $ & $\mathbf{A}_{10}$ & $\mathbf{A}_{9} $ & ${\mathrm B}_{\leq 41}$ & $\mathbf{A}_{9} $ & $\mathbf{B} $ & $\mathbf{A}_{9} $   & & & & & \\ \hline
10 & $\mathbf{B} $ & $\mathbf{B} $ & ${\mathrm B}_{\leq 40}$ & $\mathbf{B} $ & ${\mathrm B}_{\leq 40}$ & ${\mathrm B}_{\leq 47}$ & $\mathbf{B} $ & $\mathbf{B} $ & $\mathbf{B} $ & & & & \\ \hline
11 & $\mathbf{B} $ & $\mathbf{A}_{11} $ & $\mathbf{B} $ & $\mathbf{A}_{11} $ & ${\mathrm B}_{\leq 42}$ & $\mathbf{A}_{11} $ & ${\mathrm B}_{\leq 43}$ & $\mathbf{A}_{11} $ & $\mathbf{B} $& $\mathbf{A}_{11} $   & & & \\ \hline
12 & $\mathbf{B} $ & $\mathbf{B} $ & ${\mathrm B}_{\leq 43}$ & ${\mathrm B}_{\leq 50}$ & $\mathbf{B} $ & ${\mathrm B}_{\leq 44}$ & ${\mathrm B}_{\leq 36}$ & ${\mathrm B}_{\leq 50}$ & $\mathbf{B} $& $\mathbf{B} $ & $\mathbf{B} $  & & \\ \hline
13 & $\mathbf{B} $ & $\mathbf{A}_{13} $ & ${\mathrm B}_{\leq 39}$ & $\mathbf{A}_{13} $ & ${\mathrm B}_{\leq 46}$ & $\mathbf{A}_{13} $ & ${\mathrm B}_{\leq 37}$ & $\mathbf{A}_{13} $ & ${\mathrm B}_{\leq 37}$ & $\mathbf{A}_{13} $ & $\mathbf{B} $ & $\mathbf{A}_{13} $ & \\ \hline
14 & $\mathbf{B} $ & $\mathbf{B} $ & $\mathbf{B} $ & $\mathbf{B} $ & ${\mathrm B}_{\leq 42}$ & $\mathbf{B} $ & ${\mathrm B}_{\leq 42}$ & ${\mathrm B}_{\leq 49}$ & ${\mathrm B}_{\leq 40}$& ${\mathrm B}_{\leq 50}$ & $\mathbf{B} $ & $\mathbf{B} $ &$\mathbf{B} $  \\ \hline
\end{tabular}}
\vspace{.2cm}
\caption{Winner of chomp on $\mS = \langle a, \ldots, a + k \rangle$ for small values of $a$ (see Remark \ref{table}).}
\label{tab}
\end{table}

Another observation coming from our results on $\mS = \langle a, \ldots, 2a-c \rangle$ for $1\leq c\leq 3$ is that for interval generated semigroups for which $a+k$ is close to $2a$, player $A$ seems to win only in the case guaranteed by Proposition~\ref{juegaaaritm}, that is, $a$ odd and $k$ even. We formulate this suspicion as Conjecture~\ref{conjecture} in the last section.


\section{A bound for the first move of a winning strategy for $A$}\label{sec:bound}

The goal of this section is to prove that for any numerical semigroup $\mS$, there is a value $\Delta$ depending only on the Frobenius number and the number of gaps of $\mS$ such that if $A$ has a winning strategy, then $A$ has
a winning strategy with a first move $\leq \Delta$. The dependence is exponential in the Frobenius number of the semigroup
$g(\mS)$ and doubly exponential in the number of gaps of the semigroup, which we denote by $n(\mS)$. We recall that the {\it set of gaps} of a numerical semigroup is the
finite set $\mathcal N(\mS) = \N \setminus \mS = \{x \in \N \, \vert \, x \notin \mS\}$.

The proof exploits the ``local" behavior of the poset, in the sense that every down-set $P' \subsetneq \mS$ can be encoded by a pair $(x,C)$, where $x \in \mS$ and $C \subset \mathcal N(S)$. Indeed, all proper down-sets  $P' \subsetneq \mS$ are finite and if we take $x$ as the minimum integer in $\mS \setminus P'$, then we claim that $P'$ satisfies that $$([0, x-1] \cap \mS) \subseteq P' \subseteq ([0, x-1] \cup (x + \mathcal N(S))) \cap \mS,$$
where $x + \mathcal N(S) := \{x + n \, \vert \, n \in \mathcal N(\mS)\}$. Since the inclusions $([0, x-1] \cap \mS) \subseteq P' \subseteq \mS$ are trivial, let us prove 
that $P' \subseteq ([0, x-1] \cup (x + \mathcal N(S))).$ For this purpose, we take $y \in P'$ such that $y \geq x$. Since $P'$ is a down-set and $x \notin P'$, we have $x \not\lS y$. Hence, $y - x \in \mathcal N(S)$ and the claim is proved.

Thus, one can choose $C \subseteq \mathcal N(S)$ in such a way  that $P' = ([0, x-1] \cap \mS) \cup (x + C)$. We will denote this by $P' \equiv (x,C)$. It is evident that
the down-sets of $(\mS, \lS)$ are in bijection with the possible states of the chomp game, hence we encode the state of a game by elements of $\mS \times \frak P(\mathcal N(S))$, where $\frak P(\mathcal N(S))$ denotes the power set of $\mathcal N(S)$.

Let us illustrate this way of encoding the states of the chomp game with an example.

\begin{exam}\label{ex:1} 
Let $\mS = \langle 3, 5 \rangle$, then the set of gaps of this semigroup is $\mathcal N(\mS) = \{1,2,4,7\}$.

Assume now that the first player
picks $8 \in \mS$, then the remaining poset is $P_1 := \mS \setminus (8 + \mS) = {\rm Ap}(\mS,8) = \{0,3,5,6,9,10,15\}$. We observe that $[0, 7] \cup (7 + \mathcal N(S)) = [0,7] \cup \{9,10,12,15\}$ and, thus, $P_1 =  ([0, 7] \cup (8 + \mathcal N(S))) \cap \mS \equiv (8,\mathcal N(\mS)).$

If now player $B$ picks $9$, then the remaining poset is $P_2 := P_1 \setminus (9 + \mathcal \mS) = \{0,3,5,6,10\}$. The smallest element already picked is $x = 8$
and $P_2 = ([0, 7] \cup (8 + \mathcal \{2\})) \cap \mS \equiv (8, \{2\}).$

\end{exam}

Assume that at some point of the game, the remaining poset is $P \equiv (x,C)$ with $x \in \mS$ and $C \subseteq \mathcal N(S)$, and the player to play picks $x' \in P$. The following lemma explains how the resulting poset $P' := P \setminus (x' + \mS)$ is encoded.

\begin{lemma} \label{lemacodif} Let $(\mS,\lS)$ be a numerical semigroup poset, $P \equiv (x,C) \subsetneq \mS$ a down-set, and $x' \in P$.  Then, $P \setminus (x' + \mS)$ is 
encoded by $(x'',D)$, where
\begin{itemize} \item[(a)] $x'' = \min\{x,x'\}$,
\item[(b)] if $x < x'$, then $D \subsetneq C$,
\item[(c)] if $g(\mS) < x' < x - g(\mS)$, then $(z,D) = (x',\mathcal N(S))$. 
\end{itemize}
\end{lemma}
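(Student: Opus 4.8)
The plan is to derive all three parts from one bookkeeping identity for a single chomp move. Since $x' \in P \subseteq \mS$ and $\mS$ is closed under addition, the removed up-set satisfies $x'+\mS \subseteq \mS$, and removing an up-set from a down-set again yields a down-set, so $P' := P\setminus(x'+\mS)$ is a proper down-set and the encoding applies. The identity I would use is
\[
\mS\setminus P' \;=\; (\mS\setminus P)\;\cup\;(x'+\mS).
\]
Taking minima and recalling $\min(\mS\setminus P)=x$ and $\min(x'+\mS)=x'$ gives $x''=\min(\mS\setminus P')=\min\{x,x'\}$, which is exactly (a). (Note $x\neq x'$, since $x\notin P$ while $x'\in P$, so the two cases $x<x'$ and $x'<x$ are genuinely disjoint.)

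For (b), assume $x<x'$, so $x''=x$ by (a). Every element of the base $[0,x-1]\cap\mS$ is strictly below $x<x'$, hence untouched by removing $x'+\mS$; thus both $P$ and $P'$ contain exactly $[0,x-1]\cap\mS$ below $x$. Writing $P=([0,x-1]\cap\mS)\cup(x+C)$ and $P'=([0,x-1]\cap\mS)\cup(x+D)$, the inclusion $P'\subseteq P$ restricted to elements $\geq x$ forces $D\subseteq C$. For strictness I would use the removed element itself: $x'\in P$ with $x'>x$ means $x'-x\in C$, and moreover $x'-x\in\mathcal N(\mS)$ because $x\lS x'$ would drag $x$ into the down-set $P$, a contradiction; but $x'\in x'+\mS$ is removed, so $x'-x\notin D$. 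Hence $x'-x\in C\setminus D$ and $D\subsetneq C$.

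The substantive part is (c), and this is where I would spend the Frobenius bounds. The hypothesis $g(\mS)<x'<x-g(\mS)$ gives in particular $x'<x$, so (a) already yields $x''=x'$; it remains to identify $D$. I claim $x'+n\in P'$ for every gap $n\in\mathcal N(\mS)$, via three checks: (i) $x'+n\in\mS$, because $x'+n>x'>g(\mS)$ and everything above the Frobenius number lies in $\mS$; (ii) $x'+n<x$, because $n\leq g(\mS)$ and $x'<x-g(\mS)$ give $x'+n<x$, so $x'+n\in[0,x-1]\cap\mS\subseteq P$; and (iii) $x'+n$ survives the move, since $x'+n\in x'+\mS$ would force $n\in\mS$, contradicting that $n$ is a gap. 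Conversely, the general encoding inclusion $P'\subseteq([0,x'-1]\cup(x'+\mathcal N(\mS)))$ established before the lemma guarantees that every surviving element above $x'$ sits at a gap-offset, so no non-gap can enter $D$. Therefore $D=\mathcal N(\mS)$ and $(x'',D)=(x',\mathcal N(\mS))$.

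The only place where real hypotheses are consumed is (c): the lower bound $x'>g(\mS)$ is precisely what makes all shifts $x'+n$ land in $\mS$, while the upper bound $x'<x-g(\mS)$ keeps them strictly below the old threshold $x$, so the move \emph{resets} the configuration to the full gap set. I do not expect a genuine obstacle beyond carefully tracking which elements lie below $x$ versus at or above it; once the identity $\mS\setminus P'=(\mS\setminus P)\cup(x'+\mS)$ is in hand, all three parts reduce to elementary manipulations with the Frobenius number.
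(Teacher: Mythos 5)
Your proof is correct and follows essentially the same route as the paper's: part (a) from the minimum of the complement, part (b) from the inclusion $P'\subseteq P$ together with the removed element $x'$ witnessing $x'-x\in C\setminus D$, and part (c) by checking that $x'+z$ lies in $\mS$ (since $x'>g(\mS)$), lies below $x$ (since $x'<x-g(\mS)$), and is not in $x'+\mS$, for every gap $z$. Your write-up is in fact slightly more careful than the paper's (which has a typo writing $x''$ where $x$ is meant in the chain of inequalities for (c)); no further changes are needed.
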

\begin{proof} By definition $x''$ is the smallest integer in $\mS \setminus P$, and equals $\min\{x,x'\}$. If $x < x'$, then $P' = (x,C) \setminus (x' + \mS) = (x, D)$ with 
$D \subseteq C$. Since $x' \in C \setminus D$, then $D \subsetneq C$. To prove (c) we observe that $x' < x$, then by (a) we have that $P = (x', D)$ for some $d \subset \mathcal N(\mS)$. To prove that $D = \mathcal N(\mS)$ it suffices to verify that $x' + z \in P'$ for all $z \in \mathcal N(\mS)$. Indeed, since $g(\mS) < x' + z \leq x' + g(\mS) < x''$, then $x' + z \in P$. Moreover $x' + z \notin x' + \mS$, so $x' + z \in P \setminus (x' + \mS) = P'$ and (c) is proved. 
 \end{proof}

The following technical lemma, whose proof is straightforward, will be useful in the proof of the main result of this section.

\begin{lemma}\label{pasalomismo}  Let $(\mS,\lS)$ be a numerical semigroup poset and take $C \subseteq \mathcal N(\mS)$ and $x, x', y, y' > g(\mS)$ such that $y' -y = x' - x$. We set $P \equiv (x,C)$ and $Q \equiv (y,C)$. If $x' \in P$ and $P \setminus (x' + \mS) \equiv (\min\{x,x'\},D)$, then: 
\begin{itemize} 
\item[$\bullet$] $y' \in Q$,
\item[$\bullet$] $Q \setminus (y' + \mS)  \equiv (\min\{y,y'\}, D)$. 
\end{itemize}
\end{lemma}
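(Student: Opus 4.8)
The plan is to show that the gap-set $D$ produced by a move is a function of $C$ and of the relative displacement of the picked element alone, and does not depend on the absolute positions, provided all the points involved exceed $g:=g(\mS)$. Since the hypothesis $y'-y=x'-x$ says precisely that $y'$ sits relative to $y$ exactly as $x'$ sits relative to $x$, the same $D$ will then arise in both games. Throughout I write $P=([0,x-1]\cap\mS)\cup(x+C)$ and $Q=([0,y-1]\cap\mS)\cup(y+C)$, and set $\delta:=x'-x=y'-y$. Note $\delta\neq 0$, since $\delta=0$ would force $x'=x\notin P$, contradicting $x'\in P$. I would split according to the sign of $\delta$, the only subtle point being the boundary bookkeeping discussed at the end.

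Suppose first $\delta>0$, i.e. $x'>x$. As $x'\in P$ and $x'\geq x$, necessarily $x'\in x+C$, so $\delta=x'-x\in C$; hence $y'=y+\delta\in y+C\subseteq Q$, giving the first bullet. Here $\min\{x,x'\}=x$ and $P\setminus(x'+\mS)\equiv(x,D)$, where $x+D=(x+C)\setminus(x'+\mS)$: an element $x+c$ with $c\in C$ survives exactly when $x+c\notin x'+\mS$, that is, when $c-\delta\notin\mS$. Thus $D=\{c\in C\mid c-\delta\notin\mS\}$, which depends only on $C$ and $\delta$. Since $\delta\in C$ also forces $y'=y+\delta\in y+C$ with $\min\{y,y'\}=y$, repeating the same computation with base point $y$ yields $Q\setminus(y'+\mS)\equiv(y,\{c\in C\mid c-\delta\notin\mS\})=(y,D)$, which is the second bullet.

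Now suppose $\delta<0$, and write $e:=-\delta=x-x'>0$. Since $x'\in P$ with $x'<x$ we have $x'\in[0,x-1]\cap\mS$ (automatic as $x'>g$), and likewise $y'=y-e<y$ lies in $\mS$ because $y'>g$, so $y'\in[0,y-1]\cap\mS\subseteq Q$, giving the first bullet. Now $\min\{x,x'\}=x'$ and $P\setminus(x'+\mS)\equiv(x',D)$ with $x'+D=(P\cap[x',\infty))\setminus(x'+\mS)$. For $d\in\mathcal N(\mS)$ one has $x'+d\notin x'+\mS$ automatically, so $d\in D\Leftrightarrow x'+d\in P$; using $x=x'+e$ this gives
\[
D=\{d\in\mathcal N(\mS)\mid d<e\}\ \cup\ \{d\in\mathcal N(\mS)\mid d\geq e,\ d-e\in C\}.
\]
The identical analysis for $Q$, with displacement $y-y'=e$, produces the very same set, so $Q\setminus(y'+\mS)\equiv(y',D)$.

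The one step that needs care, and the place where the hypotheses $x',y'>g$ are genuinely used, is the second case: to read off $P\cap[x',\infty)=[x',x-1]\cup(x+C)$ one must know that the whole interval $[x',x-1]$ (and symmetrically $[y',y-1]$) consists of semigroup elements, which holds precisely because every integer exceeding $g$ lies in $\mS$. Granting this, both expressions for $D$ manifestly involve only $C$ and the common offset $\delta$, so the two games yield the identical $D$ and the lemma follows; this is exactly why the statement is ``straightforward'' once the encoding is set up.
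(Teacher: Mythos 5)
Your proof is correct; the paper itself omits any argument for this lemma (it is declared ``straightforward''), and your case analysis on the sign of $\delta = x'-x$ is exactly the natural verification the authors are implicitly relying on, with the explicit formulas $D=\{c\in C\mid c-\delta\notin\mS\}$ (for $\delta>0$) and $D=\{d\in\mathcal N(\mS)\mid d<e\}\cup\{d\in\mathcal N(\mS)\mid d\ge e,\ d-e\in C\}$ (for $\delta<0$) showing that $D$ depends only on $C$ and the offset. You also correctly isolate the only place the hypotheses $x',y'>g(\mS)$ are genuinely needed, namely to guarantee $[x',x-1]\subseteq\mS$ and $[y',y-1]\subseteq\mS$ in the second case.
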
 

Let us illustrate this result with an example.

\begin{exam}
Consider again $\mS = \langle 3, 5 \rangle$. We saw that if we take $x = 8,\, x' = 9$ and $C = \mathcal N(\mS) = \{1,2,4,7\}$, then $(x,C) \setminus (x' + \mS) = (x, \{2\})$.
Setting $y = 8 + \lambda,\, y' = 9 + \lambda$ for some $\lambda \in \Z^+$, it is easy to see that $(y,C) \setminus (y' + \mS) = (y, \{2\})$.
\end{exam}

Now we can proceed with the main result of this section.

\begin{theorem}\label{boundonstrategy}
If $A$ has a winning strategy for chomp on $\mS$, then $A$ has a winning strategy with first move less than $2^{g 2^n }$, where $g = g(\mS)$ and $n = n(\mS)$.
\end{theorem}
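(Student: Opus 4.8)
The plan is to exploit the encoding of down-sets by pairs $(x,C)\in\mS\times\mathfrak{P}(\mathcal{N}(\mS))$ together with the local, translation-invariant behaviour recorded in Lemmas~\ref{lemacodif} and~\ref{pasalomismo}. For the down-set encoded by $(x,C)$ write $f(x,C)=1$ if the player about to move wins and $f(x,C)=0$ otherwise, so that $f(x,C)=1$ exactly when some legal move reaches a position of value $0$. Since after $A$ opens with $a\in\mS$ the remaining poset is exactly $(a,\mathcal{N}(\mS))$, the move $a$ is a winning first move for $A$ if and only if $f(a,\mathcal{N}(\mS))=0$; hence it suffices to bound the least $a$ with $f(a,\mathcal{N}(\mS))=0$, if such an $a$ exists. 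The central object is the \emph{profile} $\Phi(x):=\bigl(f(x,C)\bigr)_{C\subseteq\mathcal{N}(\mS)}\in\{0,1\}^{2^{n}}$, of which there are at most $2^{2^{n}}$.

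First I would establish a recurrence for $\Phi(x)$ valid for all $x>2g$ by classifying the moves from $(x,C)$ according to the picked element $x'$. A \emph{tail} move ($x'\geq x$) yields $(x,D)$ with $D\subsetneq C$ by Lemma~\ref{lemacodif}(a),(b), so these are absorbed by downward induction on $|C|$ within the single profile $\Phi(x)$. A \emph{deep} body move with $g<x'<x-g$ yields $(x',\mathcal{N}(\mS))$ by Lemma~\ref{lemacodif}(c), independently of $C$; thus the only information such moves contribute to $f(x,C)$ is the single \emph{monotone} bit $B(x-g):=\bigl[\,\exists\,x',\ g<x'<x-g,\ f(x',\mathcal{N}(\mS))=0\,\bigr]$. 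A \emph{bottom} move with $x'\leq g$ erases the whole tail, since $x-x'>g$ forces $x+c\in x'+\mS$ for every $c\in C$, and therefore lands on a position $(x',D_{x'})$ with $D_{x'}=\{n\in\mathcal{N}(\mS):x'+n\in\mS\}$ depending only on $x'$ and not on $x$ or $C$; these contribute finitely many fixed bits. Finally a \emph{near-top} body move with $x-g<x'<x$ lands on $(x',D)$ with $x'\in\{x-g+1,\dots,x-1\}$, and by the translation invariance of Lemma~\ref{pasalomismo} the set of reachable pairs $(x-x',D)$ depends only on $C$, not on $x$. Collecting these four cases gives a fixed function $\mathcal{F}$ with $\Phi(x)=\mathcal{F}\bigl(\Phi(x-1),\dots,\Phi(x-g+1);\,B(x-g)\bigr)$ for every $x>2g$, where $B$ is nondecreasing in $x$.

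The theorem then follows by pigeonhole and periodicity. Let $a_0$ be the least winning first move; if $a_0\leq 2g$ we are done because $2g<2^{g2^{n}}$. Otherwise $f(x',\mathcal{N}(\mS))=1$ for every $x'\in\mS$ with $x'<a_0$, so $B(x-g)=0$ for all $x\leq a_0$; on the whole range $2g<x\leq a_0$ the recurrence specialises to one deterministic map $\Phi(x)=\mathcal{F}_0\bigl(\Phi(x-1),\dots,\Phi(x-g+1)\bigr)$. Reading this as a deterministic dynamical system on the windows $W(x):=(\Phi(x-1),\dots,\Phi(x-g+1))$, of which there are at most $(2^{2^{n}})^{g-1}=2^{(g-1)2^{n}}$, two of them must coincide, say $W(p)=W(q)$ with $2g<p<q\leq a_0$, as soon as $a_0-2g$ exceeds this count. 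Determinism forces $\Phi(p+j)=\Phi(q+j)$ for all admissible $j$, hence $\Phi(a_0-(q-p))=\Phi(a_0)$, so $a_0-(q-p)$ is a strictly smaller winning first move, contradicting minimality. Therefore $a_0\leq 2g+2^{(g-1)2^{n}}<2^{g2^{n}}$.

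The main obstacle is the honest verification of the recurrence, namely that for $x>2g$ the data $(\Phi(x-1),\dots,\Phi(x-g+1);B(x-g))$ determines $\Phi(x)$ through \emph{one and the same} function $\mathcal{F}$. This rests on two translation-invariance facts that must be drawn carefully out of Lemmas~\ref{lemacodif} and~\ref{pasalomismo}: that a bottom move deletes the entire tail and so lands on a position independent of $x$ and $C$, and that the family of near-top moves, together with the codes $D$ they produce, depends only on the offset $x-x'$ and on $C$. One must also isolate the monotone bit as the sole non-window dependence and check that it stays equal to $0$ throughout the range below $a_0$. Some routine bookkeeping is needed for small $g$ and $n$ (for instance $g=1$, where the window is empty and $\Phi$ is eventually constant) to confirm the final inequality $2g+2^{(g-1)2^{n}}<2^{g2^{n}}$; this holds with room to spare since $2^{g2^{n}}\geq 2^{2^{n}}\cdot 2^{(g-1)2^{n}}\geq 4\cdot 2^{(g-1)2^{n}}$.
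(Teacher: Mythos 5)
Your overall strategy is the same as the paper's: your profile $\Phi(x)$ is precisely the indicator vector of the paper's set $W_x$ of codes $C$ for which the player to move wins on $(x,C)$; your window-pigeonhole-plus-determinism argument is the paper's Claims~2 and~3; and your minimality contradiction is its Claim~1. There is, however, a concrete gap in the move classification on which your recurrence rests. Your cases are $x'\le g$, then $g<x'<x-g$, then $x-g<x'<x$, then $x'\ge x$, which omits the legal move $x'=x-g$ (legal whenever $x>2g$, since then $x-g\in\mS\cap[0,x-1]\subseteq P$). Lemma~\ref{lemacodif}(c) does not cover it: the resulting position is $(x-g,\,\mathcal N(\mS)\setminus\{g\})$, because every gap $z<g$ gives $x-g+z\in\mS\cap[0,x-1]\subseteq P$ while $x-g+g=x\notin P$. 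Its value is a coordinate of $\Phi(x-g)$, which lies outside your declared window $(\Phi(x-1),\dots,\Phi(x-g+1))$, so the recurrence $\Phi(x)=\mathcal F(\Phi(x-1),\dots,\Phi(x-g+1);B(x-g))$ is false as stated.

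The repair is not free with your bookkeeping: enlarging the window to include $\Phi(x-g)$ gives $(2^{2^{n}})^{g}=2^{g2^{n}}=\Delta$ possible windows, which destroys the final inequality $a_0\le 2g+(\text{number of windows})<\Delta$. You need the extra observation the paper makes in its Claim~2: throughout the range below $a_0$ one has $f(x,\mathcal N(\mS))=1$ (since $x$ is not a winning first move), so each profile takes at most $2^{2^{n}-1}$ values and $g$-long windows number $2^{g(2^{n}-1)}=2^{g2^{n}-g}$, which does fit below $2^{g2^{n}}$. (A cheaper patch is to adjoin to the window only the single bit $f(x-g,\mathcal N(\mS)\setminus\{g\})$, doubling your count; note that this bit for $x+1$ is recoverable from $\Phi(x-g+1)$, so determinism still propagates.) A further minor imprecision: the equivalence ``$a$ is a winning first move iff $f(a,\mathcal N(\mS))=0$'' is only valid for $a>g$, since for $a\le g$ the poset ${\rm Ap}(\mS,a)$ is encoded by $(a,C)$ with $C=\{z\in\mathcal N(\mS):a+z\in\mS\}$, which may be a proper subset of $\mathcal N(\mS)$; this is harmless where you use it but should be stated. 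With these repairs your argument is a valid rendering of the paper's proof.
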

\begin{proof} Denote $\Delta := 2^{g 2^n}$ and suppose that $A$ has no winning strategy with first move less than $\Delta$. We will prove that $A$ has no winning strategy.

For every $x \in \mS$ we define the set $W_x \subseteq \frak P (\mathcal N(\mS))$ as follows: $C \in W_x$ if and only if the first player has a winning strategy
on the chomp game on the poset $(x,C)$. The elements $C \in W_x$ can be inductively characterized as follows: $C \in W_x$ if and only if there exists $y \in (x,C)$ such that
if $(x,C) \setminus (y + \mS) = (z,D)$, then $D \notin W_z$. Another interesting property of the sets $W_x$ is that, if $x > g$, then $x$ is a winning first move for chomp on $\mS$ if and only if $\mathcal N(\mS) \notin W_x$.  In particular, since $A$ has no winning strategy with first move less than $\Delta$, we have that $\mathcal N(\mS) \in W_x$ for all $g < x \leq \Delta$.

The result is a direct consequence of the following claims.

{\it Claim 1}: Let $x,y \in \mS:\, g < x < y \leq \Delta$ and $k \in \Z^+$ such that $W_{x+i} = W_{y+i}$ for all $0 \leq i < k$. Then, $z \in \mS$ is not a winning first move for player $A$ for all  
$z < y +k$.

\noindent {\it Proof of Claim~1}. Assume there exists a winning first move $z < y + k$, and take $z$ the minimum possible.
We observe that $g < x < y \leq \Delta \leq z < y + k$ and that $\mathcal N(\mS) \notin W_z = W_{y + (z - y)}$ with $z - y < k$. Hence, $W_{y + (z-y)} = W_{x+(z-y)}$ and $\mathcal N(\mS) \notin W_{x + (z - y)}$. This implies that $x + z - y$ is a winning first move but $x + z - y < z$, a contradiction.

{\it Claim 2:} There exist $x,y \in \mS:\, g < x < y \leq \Delta$ such that  $W_{x+i} = W_{y+i}$ for all $0 \leq i < g$

\noindent {\it Proof of Claim~2}.
Since the power set of $\mathcal N(\mS)$ has $2^n$ elements and $\mathcal N(\mS) \in W_{x}$ for all $x:\, g < x \leq \Delta$;  
there are $2^{2^n-1}$ possible
sets $W_x$ for each $x: \, g < x \leq \Delta$. As a consequence, there are  $\mu := 2^{(2^{n}-1)g}$ possible combinations of sets $(W_x, W_{x+1},\ldots,W_{x+g-1})$ for all $x: 
\, g < x \leq \Delta - g$. Since $\mu < \Delta - 2g$, there exist
$x, y:\, g < x < y \leq \Delta - g$ such that $(W_{x},W_{x+1},\ldots,W_{x+g-1})  = (W_{y},W_{y+1},\ldots,W_{y+g-1})$.

{\it Claim 3}: Take $x,y$ as in {\it Claim 2}, then $W_{x+i} = W_{y+i}$ for all $i \in \N$.
 
\noindent {\it Proof of Claim~3}. By contradiction, we take $k$ as the minimum value such that $W_{x+k} \neq W_{y+k}$. By {\it Claim 2} we have that $k \geq g$.

Now, take $C$ as an element in the symmetric difference of $W_{x+k}$ and $W_{y+k}$ with the minimum number of elements. Assume without loss of generality that $C \in W_{x+k}$ (and $C \notin W_{y+k}$). 

Since $C \in W_{x+k}$, there is a winning move in the poset $(x + k,C)$. Let $x'$ be this winning move. We are going to prove that $y' := x' - x + y$ is a winning move for $(y + k, C)$; this would be a contradiction because $C \notin W_{y+k}$ and, hence, there is no winning move in this poset. 
We denote $(x + k, C) \setminus (x' + \mS) \equiv (x'',D)$; we have that 
\begin{itemize} \item[(i)] $x'' = \min\{x,x'\}$, by Lemma \ref{lemacodif}.(a), 
\item[(ii)]  $D \notin W_{x''}$, because $x'$ is a winning answer in $(x+k,C)$, and
\item[(iii)] $x' \geq x + k - g$ (otherwise, by Lemma \ref{lemacodif}.(c), $x'' = x'$ and $D = \mathcal N(\mS) \notin W_{x'}$, which means that $x'$ is a winning first move; a contradiction to {\it Claim 1}). 
\end{itemize}

We set $y'' := x'' - x + y$, by Lemma \ref{pasalomismo} we have that $(y + k, C) \setminus (y' + \mS) = (y'', D)$. To prove that $y' = x' - x + y$ is a winning move for $(y + k, C)$ is equivalent to see that $D \notin W_{y''}$. We separate two cases:
\begin{enumerate}
 \item $x' > x + k$: We have that  $x'' = x + k$ and $D \subsetneq C$ by Lemma \ref{lemacodif}.(b); hence $(y'', D) = (y+k, D)$. Since $D \subsetneq C$ and $D \notin W_{x+k}$, we get that $D \notin W_{y+k} = W_{y''}$.
 \item $x' < x + k$: We have that $x'' = x'$. By (iii) we have that $x \leq x + k - g \leq x' < x + k$, then  
$W_x' = W_{x + (x'-x)} = W_{y+(x'-x)} = W_{y+(y'-y)} = W_{y'}$ and $D \notin W_{y'} = W_{y''}$. 
\end{enumerate}

This proves Claim~3. Claims~1 and 3 conclude the proof.
 \end{proof}

Apart from this bound being probably far from optimal, its theoretic use is to prove the decidability of the problem of determining which player has a winning strategy. Indeed, 
this result yields a (non-efficient) algorithm for this problem: it suffices to try all possible games with a first move $\leq \Delta$ to decide if player 
$A$ has a winning strategy. It would be nice to have better bounds on a first move for $A$ which uses in a deeper way the semigroup structure of the poset. 

Indeed, it is worth pointing out that the proof only exploits the local behavior of the poset and, hence, such a result can
 be obtained for infinite posets with similar behavior. This is the purpose of the following section.

\section{Numerical semigroups with torsion}\label{sec:torsion}

The goal of this section is to extend Theorem~\ref{simetricoBgana} and Theorem~\ref{boundonstrategy} to more general families of posets with a semigroup structure. We start from a high point of view and consider subsemigroups of $\N \times T$ where $T$ is any finite monoid. The main result of this section is that the above-mentioned theorems generalize naturally to this setting if and only if $T$ is a finite abelian group.  Semigroups of this kind, which we call {\it numerical semigroups with torsion}, have been considered in the literature by several authors due to their connections with one dimensional lattice ideals (see, for example, \cite{HV,MT}). Whenever $\mS$ is a numerical semigroup with torsion that additionally has no inverses, it carries a natural poset structure $(\mS,\lS)$ induced by $x \lS y \Leftrightarrow y - x \in \mS$. Moreover, for every $x, y \in \mS$ the upper sets $x + \mS$ and $y + \mS$ are isomorphic with via the addition of $y - x \in \Z \times T$. Since the addition in $\Z \times T$ is commutative this implies that $(\mS, \lS)$ is auto-equivalent in the sense of \cite[Section 5]{cgpr}.

In order to extend Theorem~\ref{simetricoBgana} and Theorem~\ref{boundonstrategy} let us first highlight the main ingredients of their proofs. 
We observe that Theorem~\ref{simetricoBgana} relies on the following property of semigroup posets: whatever player $A$'s first move is, the remaining poset is finite (Remark~\ref{aperychomp}) and its number of maximal elements does not depend on the first move (Corollary~\ref{typeinvariant}). Moreover, this number is exactly $1$ if and only if the corresponding numerical semigroup is symmetric. Similarly, Theorem~\ref{boundonstrategy} builds on the local behavior of numerical semigroup posets. Hence,  whenever a family of posets has this behavior, we obtain analogue versions of Theorems~\ref{simetricoBgana} and~\ref{boundonstrategy}. 


In order to see how far we can go with respect to the structural restrictions on $T$ while still ensuring a generalization of our results, we start by studying submonoids of $(\N \times T, \cdot)$, where $\N$ is considered as an additive monoid and $(T,\cdot)$ is a finite monoid. In order to avoid  cases, we assume that our numerical semigroups with torsion are infinite, that is, not all their elements have a zero in the first coordinate.

Even if we will not make explicit use of it later, we begin by studying which such monoids are finitely generated as a result of independent interest. For this we restrict the notion of finitely generated in the following sense: Let $T$ be a finite monoid with neutral element $e$ and let $(\mS,\cdot)$ be a submonoid of $\N \times T$. We set $\mS_{t} := \mS \cap (\N \times \{t\})$ for all $t \in T$. We say that $\mS$ is \emph{nicely generated} if there exists a finite set $F \subset \mS$ such that $ \mS = F \cdot \mS_e$. Since $\mS_e$ is 
isomorphic to a subsemigroup of $\N$, there exist $a_1,\ldots, a_n \in \N$ such that $\mS_e = \langle (a_1,e),\ldots, (a_n,e) \rangle$. Thus, $\mS$ being nicely generated means that every $y \in \mS$ can be written as $y = x \cdot (b,e)$ with $x \in F$ and $b \in \langle a_1,\ldots,a_n \rangle$. In particular, nicely generated implies finitely generated.  

\begin{exam} \label{ex:2}
 Consider $(T,\cdot)$ the monoid
with $T = \{0,1,2\}$ and $i \cdot j := \min \{i+j,2\}$ for all $i,j \in \{0,1,2\}$. Then $\mS = \langle (3,0), (2,1), (3,2) \rangle$ is nicely generated, indeed, $\mS_e = \langle (3,0) \rangle$ and $\mS = F \cdot \mS_e$ with $F = \{(0,0), (2,1), (3,2), (4,2), (8,2)\}$
\end{exam} 

\begin{prop}\label{conditionfingen}Let $T$ be a finite monoid with neutral element $e$ and let $(\mS,\cdot)$ be a submonoid of $\N \times T$. We have that $\mS$ is nicely generated if and only if $(a,e) \in \mS$ for some $a > 0$. 
\end{prop}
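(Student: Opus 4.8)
The plan is to prove both implications of this characterization. The statement is that a submonoid $\mS$ of $\N \times T$ is nicely generated if and only if $(a,e) \in \mS$ for some $a > 0$, where $e$ is the neutral element of $T$.

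First I would dispatch the easy direction. If $\mS$ is nicely generated, then by definition $\mS = F \cdot \mS_e$ for a finite set $F$, and $\mS_e$ is isomorphic to a subsemigroup of $\N$. Since $\mS$ is assumed infinite (not all elements have zero first coordinate) and $T$ is finite, some fiber $\mS_t$ must be infinite; but the nice generation forces $\mS_e$ itself to be infinite (each coset $x \cdot \mS_e$ for $x \in F$ contributes the $\N$-progression, and only $\mS_e$ can supply the unbounded first coordinates compatibly). Thus $\mS_e$ contains some element $(a,e)$ with $a > 0$. I would make this precise by noting that $\mS_e = \langle (a_1,e),\ldots,(a_n,e)\rangle$ must be nontrivial, giving the desired $a$.

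The substantive direction is the converse: assuming $(a,e) \in \mS$ for some $a > 0$, I must exhibit a finite $F$ with $\mS = F \cdot \mS_e$. The key observation is that $(a,e) \in \mS$ implies $\mS_e \supseteq \langle (a,e) \rangle$, so $\mS_e$ is infinite and contains an arithmetic progression $\{(\lambda a, e) \mid \lambda \in \N\}$. I would then show that for each fixed $t \in T$, the fiber $\mS_t$ is \emph{eventually periodic} with period dividing $a$ in the first coordinate: concretely, if $(b,t) \in \mS_t$, then $(b,t)\cdot(a,e) = (b+a, t) \in \mS_t$ since $e$ is neutral. Hence $\mS_t$ is a union of finitely many residue classes modulo $a$, each of which is eventually a full progression $b_0, b_0 + a, b_0 + 2a, \ldots$ once past the smallest element in that class. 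For each such class in each fiber, I select the minimal element as a generator; collecting these minimal elements over all $t \in T$ and all occupied residue classes modulo $a$ yields a finite set $F$. Because $T$ is finite and there are at most $a$ residues per fiber, $F$ is finite. Then every $(b,t) \in \mS$ equals $(b_0, t) \cdot (\lambda a, e)$ where $(b_0,t) \in F$ is the minimal element in its class and $\lambda a = b - b_0 \geq 0$, so $(b,t) \in F \cdot \mS_e$.

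The main obstacle I expect is verifying that the minimal element $(b_0,t)$ of each residue class actually lies in $\mS$ (so that it is a legitimate generator) and that multiplying it by $(\lambda a, e) \in \mS_e$ genuinely recovers $(b,t)$ rather than landing in a different fiber. The first point is immediate since $\mS_t$ is nonempty in that class by construction. The second relies crucially on $e$ being the neutral element so that the $T$-component is preserved under multiplication by $\mS_e$; this is where the structure of $T$ as a monoid with identity is essential, and it is exactly the property that fails for more general $T$. I would also need to confirm $F \subseteq \mS$ and that $F$ can be taken finite, both of which follow from the finiteness of $T$ combined with the bounded number of residues modulo $a$. Once the eventual-periodicity of each fiber is established, the decomposition $\mS = F \cdot \mS_e$ is a routine verification.
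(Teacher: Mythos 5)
Your proof is correct. Both directions go through: the forward implication is the same cardinality observation as in the paper (if $F$ is finite and $\mS_e$ were trivial, $\mS = F \cdot \mS_e$ would be finite, contradicting the standing assumption that $\mS$ is infinite), and the converse is sound. The route you take for the converse is, however, a genuine streamlining of the paper's argument. The paper writes $\mS_e = \langle (a_1,e),\ldots,(a_n,e)\rangle$, sets $d := \gcd\{a_1,\ldots,a_n\}$, partitions each fiber $\mS_t$ into residue classes modulo $d$, and then must invoke a conductor-type bound ($(b,e) \in \mS_e$ for all sufficiently large multiples $b$ of $d$) to conclude that only finitely many elements fail to factor as $(r_{t,j},t)\cdot(s,e)$; those finitely many exceptions have to be absorbed into $F$. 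You instead work modulo the single witness $a$ with $(a,e) \in \mS$: since $\langle (a,e)\rangle = \{(\lambda a, e) \mid \lambda \in \N\} \subseteq \mS_e$ has no gaps among multiples of $a$, the minimal element of each occupied residue class modulo $a$ in each fiber already generates its entire class exactly, with no exceptional elements and no Frobenius bound needed. The cost is a possibly larger (but still finite, at most $a\,|T|$) set $F$ than the paper's; the gain is that the decomposition $\mS = F\cdot\mS_e$ is exact from the outset and the proof avoids the numerical-semigroup conductor argument entirely. Your identification of where neutrality of $e$ enters (preservation of the $T$-component under multiplication by $\mS_e$) is exactly the right point to flag.
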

\begin{proof} 
The first direction is trivial. If $\mS$ does not  contain an element of the form $(a,e)$ with $a > 0$, then $\mS = F \cdot \mS_e = F$ is finite, contradicting our general assumption that $\mS$ is infinite.

Conversely, we take $a_1,\ldots,a_n \in \N$ such that $\mS_e = \langle (a_1,e), \ldots, (a_n,e) \rangle$. If we set $d := \gcd\{a_1,\ldots,a_n\}$, then
 there exists $g \in \Z^+$ such that for all $b > g$, $(b,e) \in \mS_e$ if and only if $d$ divides $b$.

For every $t \in T$, $j \in \{0,\ldots,d-1\}$ we denote $\mS_{t,j} := \{(x,t) \in \mS_t\, \vert \, x \equiv j \ ({\rm mod}\ d)\}$ and, whenever $\mS_{t,j}$ is not empty, we set $r_{t,j} := \min \{x \in \N \, \vert \, (x,t) \in \mS_{t,j} \}.$ We conclude that there is only a finite number of elements in $\mS$ that cannot be expressed as $(r_{t,j},t) \cdot (s,e)$ with $(s,e) \in \mS_e$; this proves the result.
 \end{proof}

 
 As a consequence of Proposition \ref{conditionfingen} we get  that if $T$ is a finite group, then $\mS$ is always finitely generated.
 
\begin{coro}\label{corfingen} Let $T$ be a finite group with neutral element $e$ and let $(\mS,\cdot)$ be a submonoid of $\N \times T$. Then $\mS$ is nicely generated.
 \end{coro}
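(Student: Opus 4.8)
The plan is to deduce this corollary directly from Proposition~\ref{conditionfingen}, which already reduces ``nicely generated'' to the purely existential condition that $\mS$ contains an element $(a,e)$ with $a > 0$. So the entire task is to produce such an element from the hypothesis that $T$ is a finite \emph{group} (rather than merely a finite monoid), together with the standing assumption that $\mS$ is infinite.

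First I would use the infiniteness of $\mS$. Recall that $\mS$ is assumed not to have all its elements with a zero first coordinate; equivalently, $\mS_e$ would otherwise be trivial and $\mS = F \cdot \mS_e$ would be finite. Hence there exists an element $(b,t) \in \mS$ with $b > 0$ and $t \in T$. The point of passing from a monoid to a group is exactly that now every element of $T$ has finite order: let $m$ be the order of $t$, so that $t^m = e$.

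Next I would exploit that $\mS$ is a submonoid and therefore closed under the (multiplicatively written) operation of $\N \times T$, under which $(b,t)^m = (mb,\, t^m) = (mb,\, e)$. Since $b > 0$ we have $mb > 0$, so $(mb,e) \in \mS$ is an element of the required form with $a := mb > 0$. Applying Proposition~\ref{conditionfingen} then gives that $\mS$ is nicely generated, which is the claim.

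There is essentially no hard step here: the only thing being used beyond Proposition~\ref{conditionfingen} is that in a finite group the neutral element is recovered as a power of any element, which furnishes a positive multiple of $b$ lying in the neutral fiber. It is worth emphasising in the write-up \emph{why} a finite group suffices where a general finite monoid need not (Example~\ref{ex:2} shows a nicely generated monoid, but the converse direction of Proposition~\ref{conditionfingen} makes clear that the existence of $(a,e)$ with $a>0$ is the whole obstruction, and a non-group monoid may fail to supply it). The argument is otherwise immediate.
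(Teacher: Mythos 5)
Your argument is correct and is essentially the paper's own proof: both obtain an element $(b,t)\in\mS$ with $b>0$ from the standing infiniteness assumption, raise it to the order of $t$ to produce $(kb,e)\in\mS$ with $kb>0$, and conclude via Proposition~\ref{conditionfingen}. The only difference is presentational (the paper phrases the second step as a contradiction), so no further comparison is needed.
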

 
\begin{proof}By Proposition \ref{conditionfingen}, if there exists $a > 0$ such that $(a,e) \in \mS$, then  we are done. Otherwise, we are going to see that $\mS \subset \{0\} \times T$ and, hence, it is finite. By contradiction, assume that there exists $(b,t) \in \mS$ with $b > 0$;  if we multiply $(b,t)$ with itself $k$ times, with 
$k$ the order of $t \in T$, we get that $(k b, e) \in \mS$ and $kb > 0$, a contradiction.
  \end{proof}

From now on, we assume without loss of generality that $\mS_t \not= \emptyset$ for all $t \in T$; otherwise one could find another monoid $T' \subsetneq T$ such that
$\mS \subseteq \N \times T'$. For example for $T = \Z / 6 \Z$ the cyclic group with $6$ elements, the semigroup $\mS = \langle (7,2), (4,4) \rangle \subset \N \times T$ satisfies  that $\mS_{1} = \mS_{3} = \mS_{5} = \emptyset$; hence taking $T' := 2 \Z / 6 \Z = \{0,2,4\}$ we have that $\mS \subset \N \times T'$.

In general, the complement of $x \cdot \mS := \{x \cdot s \, \vert \, s \in \mS\}$ in $\mS$ is not finite. For instance, if we take $x = (2,1)$ in Example \ref{ex:2}, then
$\langle (3,0) \rangle$ is an infinite subset of $\mS \setminus (x \cdot \mS)$. 
Proposition~\ref{finiteafteronemove} provides a sufficient condition
so that the complement of $x \cdot \mS := \{x \cdot s \, \vert \, s \in \mS\}$ in $\mS$ is finite for all $x \in \mS$. Since the set $\mS \setminus (x \cdot \mS)$ is a natural
generalization of the Ap\'ery set for numerical semigroups, we will usually denote it by ${\rm Ap}(\mS,x)$.
The condition that ${\rm Ap}(\mS,x)$ is finite for every $x$ is equivalent to saying that when playing chomp
on $\mS$, after one move the remaining poset is finite. Before stating the result we recall a well known result.

\begin{prop}\label{latinsquare}Let $T$ be a monoid. Then, $T$ is a group if and only if for all $t_1,t_2 \in T$ there exists an $u \in T$ such that $t_1 \cdot u = t_2$.
\end{prop}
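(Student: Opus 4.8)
The plan is to treat the two implications separately. The forward direction is immediate from the group structure, while the reverse direction will be reduced to the classical fact that a \emph{monoid} in which every element has a right inverse is automatically a group; the monoid hypothesis (existence of a neutral element together with associativity) is exactly what lets one-sided solvability suffice.

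For the forward implication, suppose $T$ is a group with neutral element $e$. Given $t_1, t_2 \in T$, I would simply take $u := t_1^{-1} \cdot t_2$. Then $t_1 \cdot u = t_1 \cdot t_1^{-1} \cdot t_2 = t_2$, so the required solvability condition holds for every pair $(t_1,t_2)$.

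For the reverse implication, the first step is to specialize the hypothesis. Applying the solvability condition with $t_2 = e$ shows that for every $t \in T$ there is some $u \in T$ with $t \cdot u = e$; that is, every element of $T$ admits a right inverse. It then remains to promote right inverses to genuine two-sided inverses. Fix $a \in T$, choose $b$ with $a \cdot b = e$, and choose $c$ with $b \cdot c = e$. Using associativity and the neutral element,
\[
c = e \cdot c = (a \cdot b) \cdot c = a \cdot (b \cdot c) = a \cdot e = a,
\]
so $c = a$, and therefore $b \cdot a = b \cdot c = e$. Hence $b$ is a two-sided inverse of $a$; since $a$ was arbitrary, $T$ is a group.

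There is no real obstacle here, and the only points worth flagging are conceptual rather than computational. First, the stated right-solvability condition is formally stronger than what the argument actually consumes: only the instance $t_2 = e$ is used. Second, the associativity trick converting a right inverse into a two-sided inverse is precisely what makes the monoid assumption enough — without a neutral element (for a mere semigroup) one would additionally need the symmetric left-solvability condition, so it is important that $T$ is assumed to be a monoid throughout.
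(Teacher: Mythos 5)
Your proof is correct: the forward direction is immediate, and your reverse direction correctly specializes the solvability condition to $t_2=e$ to get right inverses and then uses the standard associativity argument to upgrade them to two-sided inverses. The paper itself offers no proof here --- it merely ``recalls'' the proposition as a well-known fact --- so there is nothing to compare against; your write-up simply supplies the standard argument the authors chose to omit.
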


%
\begin{prop}\label{finiteafteronemove}Let $(T,\cdot)$ be a finite monoid and let $\mS$ be a submonoid of $\N \times T$. We have that $T$ is a group if and only if ${\rm Ap}(\mS,x)$ is finite for all $x \in \mS$ and $\mS$ is nicely generated.
\end{prop}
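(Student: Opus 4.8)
The plan is to prove both implications of this equivalence. For the forward direction, I would assume $T$ is a group and establish two things: that $\mS$ is nicely generated, and that ${\rm Ap}(\mS,x)$ is finite for every $x \in \mS$. The first of these is immediate from Corollary~\ref{corfingen}, which already states that submonoids of $\N \times T$ for $T$ a finite group are always nicely generated. So the real content in this direction is the finiteness of ${\rm Ap}(\mS,x) = \mS \setminus (x \cdot \mS)$. Here I would fix $x = (a,t) \in \mS$ and, using the standing assumption that $\mS_s \neq \emptyset$ for all $s \in T$, argue that for every coset/fiber $\mS_s$, all sufficiently large elements lie in $x \cdot \mS$. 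The key is that since $T$ is a group, for each $s \in T$ there is a unique $u \in T$ with $t \cdot u = s$ (Proposition~\ref{latinsquare}), so multiplying $x$ by elements of $\mS_u$ produces elements in $\mS_s$; combined with the fact that $\mS_e$ (isomorphic to a numerical semigroup once we divide out the gcd) contains all large multiples of its $\gcd$, I expect to cover all but finitely many elements of each fiber $\mS_s$. Since $T$ is finite, finitely many omitted elements per fiber gives a finite ${\rm Ap}(\mS,x)$ in total.

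For the converse, I would prove the contrapositive: if $T$ is \emph{not} a group, then either $\mS$ fails to be nicely generated or ${\rm Ap}(\mS,x)$ is infinite for some $x$. By Proposition~\ref{latinsquare}, $T$ not being a group means there exist $t_1, t_2 \in T$ such that $t_1 \cdot u \neq t_2$ for all $u \in T$; equivalently, the left-translation by some element is not surjective. The idea is to pick an element $x \in \mS$ whose $T$-component is this ``deficient'' $t_1$, and then exhibit an infinite subset of $\mS$ that avoids $x \cdot \mS$. Concretely, the elements $x \cdot \mS$ all have $T$-component in $t_1 \cdot T$, which is a proper subset of $T$; hence any fiber $\mS_s$ with $s \notin t_1 \cdot T$ is entirely disjoint from $x \cdot \mS$. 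Under the standing assumption that all fibers $\mS_s$ are nonempty, such an $s$ exists, but a single nonempty fiber need not be infinite, so I would instead combine this with the nicely-generated hypothesis: if $\mS$ \emph{is} nicely generated, then (as in Proposition~\ref{conditionfingen}) $\mS_e$ is infinite and translates of it fill the fibers, making the relevant fiber $\mS_s$ infinite and thus ${\rm Ap}(\mS,x)$ infinite. This matches the structure of Example~\ref{ex:2}, where $x = (2,1)$ leaves $\langle (3,0) \rangle$ entirely outside $x \cdot \mS$.

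I expect the main obstacle to be handling the logical structure of the converse cleanly. The statement is an ``if and only if'' where the right-hand side is a \emph{conjunction} (``${\rm Ap}(\mS,x)$ finite for all $x$ \emph{and} $\mS$ nicely generated''), so the negation is a disjunction, and I must be careful to show that failure of the group property forces failure of at least one conjunct under the appropriate bookkeeping of the standing assumptions. In particular, I should verify that the finiteness of a single fiber cannot be used to sidestep the argument: the cleanest route is to assume $\mS$ is nicely generated (otherwise that conjunct already fails and we are done) and then derive that some fiber outside $t_1 \cdot T$ is infinite, forcing the Apéry set to be infinite. A secondary technical point is justifying that in the forward direction the ``all but finitely many'' count is uniform enough across the finitely many fibers to yield a genuinely finite total, which reduces to the numerical-semigroup fact that $\mS_e$ omits only finitely many multiples of its $\gcd$.
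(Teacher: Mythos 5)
Your proposal is correct and follows essentially the same route as the paper: Corollary~\ref{corfingen} for nice generation, a fiber-by-fiber argument using a solution of $t \cdot u = s$ together with the eventual-multiples-of-$d$ structure of $\mS_e$ to get finiteness of the Ap\'ery sets, and, for the converse, the non-surjective translation $t_1 \cdot T \subsetneq T$ combined with the infinitude of the fibers (forced by nice generation) to produce an infinite ${\rm Ap}(\mS,x)$. The only detail worth making explicit in a write-up is that any two elements of the same fiber $\mS_s$ have first coordinates congruent modulo $d = \gcd$ of the generators of $\mS_e$ (shown by translating into $\mS_e$ via inverses in $T$), which is what guarantees that every sufficiently large element of $\mS_s$ is genuinely reached by $x \cdot x' \cdot \mS_e$.
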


\begin{proof}For the first direction let $T$ be a group. By Corollary~\ref{corfingen} we have that $\mS$ is nicely generated. Hence, there exists an element $(a,e) \in \mS$ with $a > 0$. We write $\mS_e = \langle (a_1,e),\ldots,(a_n,e) \rangle$ for some $a_1,\ldots,a_n \in \Z^+$. If we set $d := \gcd\{a_1,\ldots,a_n\}$, then:
\begin{enumerate} 
\item[(1)] if $(b,e) \in \mS_e$, then $b$ is a multiple of $d$, and
\item[(2)] there exists $g \in \Z^+$ such that for all $b > g$, $(b,e) \in \mS_e$ if and only if $b$ is a multiple of $d$.
\end{enumerate}
For every $(b_1,t), (b_2,t) \in \mS$ we claim that:
\begin{itemize}
\item[(3)] $b_1 - b_2$ is a multiple of $d$;
\end{itemize} indeed, there exists a $c \in \N$ such that $(c,t^{-1}) \in T$, then $(b_1+c,e), (b_2+c,e) \in \mS_e$ and (3) follows from (1).

Take now $x = (b,t) \in \mS$. We observe that ${\rm Ap}(\mS,x) = \cup_{s \in T} \left( \mS_s \setminus (x \cdot \mS) \right)$ and we
are going to prove that $\mS_s \setminus (x \cdot \mS)$ is finite for all $s \in T$. More precisely, if we take $x' = (c,t') \in \mS$ so that $t \cdot t' = s$,
we are proving that $(a,s) \in x \cdot \mS$ for all  $a > b + c + g$. Indeed, since $x \cdot x' = (b+c,s)$, then $a - b - c$ is a multiple of $d$ by (3); and, since $a - b - c > g$, then $(a-b-c,e) \in \mS_e$ by (2). Thus, we can write $(a,s) = x \cdot x' \cdot (a-b-c, e) \in x \cdot x' \cdot \mS_e \subset x \cdot \mS$, and the result follows.

We prove the second direction by contradiction. Assume that $T$ is not a group. Since $\mS$ is nicely generated by Proposition~\ref{conditionfingen} there exists $a > 0$ such that $(a,e) \in \mS$.  First, we observe that $\mS_t$ is infinite for all $t \in T$. Indeed, for all $t \in T$, if we take $b \in \N$ such that $(b,t) \in \mS_t$, then $(b + \lambda a, t) \in \mS_t$ for all $\lambda \in \N$.
Since $T$ is not a group, by Proposition~\ref{latinsquare}, there exist $t_1,t_2$ such that $t_1 \cdot u \neq t_2$ for all $u \in T$.  Then, for all
$x \in \mS_{t_1}$ we have that $\mS_{t_2} \subset {\rm Ap}(\mS,x)$ and, thus, it is infinite.
 \end{proof}

The following example illustrates why the hypothesis of being nicely generated cannot be removed. Consider $(T,\cdot)$ the monoid
with $T = \{0,\ldots,n\}$ and $i \cdot j := \min \{i+j,n\}$ for all $i,j \in \{0,\ldots,n\}$. Clearly, $T$ is not a group but the monoid $\mS \subset \N \times T$ with elements $\{(i,i) \,\vert \, 0 \leq i < n\} \cup \{ (i,n) \, \vert \, i \in \N\}$ satisfies that for all $x \in \mS$, the set ${\rm Ap}(x,\mS)$ is finite. While $\mS$ is finitely generated, it does not contain any $(a,0)$ with $a>0$ and therefore is not nicely generated by Proposition~\ref{conditionfingen}.

Motivated by Proposition~\ref{finiteafteronemove}, from now on we will consider submonoids $\mS$ of $\N \times T$, where $T$ is a finite group. It is worth pointing out that such a monoid is embeddable in the group $\Z \times T$.  We are first proving that in this context one can extend the definition of Frobenius number.

\begin{prop}\label{frobgeneral}Let $(T,\cdot)$ be a finite group and $\mS$ a submonoid of $\N \times T$. Denote by $\Z \mS$ the smallest subgroup of $\Z \times T$ containing $\mS$.
There exists $g(\mS) \in \Z$ such that for all $(a,t) \in \Z \mS$ if $a > g(\mS)$, then $(a,t) \in \mS$. 
\end{prop}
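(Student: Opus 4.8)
The plan is to reduce the statement to the classical Frobenius property of the numerical semigroup $\mS_e \subseteq \N \times \{e\}$. By Corollary~\ref{corfingen} the monoid $\mS$ is nicely generated, so $\mS_e = \langle (a_1,e), \ldots, (a_n,e) \rangle$ for suitable $a_1,\ldots,a_n \in \Z^+$; set $d := \gcd\{a_1, \ldots, a_n\}$. As recalled in the proof of Proposition~\ref{finiteafteronemove}, it is classical that there is a $g_0 \in \Z$ (which we may take $\geq 0$) such that for every integer $b > g_0$ one has $(b,e) \in \mS_e$ if and only if $d \mid b$. The whole difficulty is then to transport this ``one coordinate at a time'' control to every fibre $\mS_t$, using that $T$ is a group, so that each $\mS_t$ is nonempty and each $(c,t)$ is invertible in $\Z \times T$.

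The key step, and the one I expect to be the main obstacle, is to pin down exactly which elements of $\Z\mS$ lie over $e$, namely to prove
\[
\Z\mS \cap (\Z \times \{e\}) \subseteq d\Z \times \{e\}.
\]
(The reverse inclusion is immediate, since $\mS_e$ already generates $d\Z \times \{e\}$ as a group.) To do this I would first record that whenever $(b_1,t),(b_2,t) \in \mS$ lie over the same $t$, then $d \mid (b_1 - b_2)$: picking any $(c, t^{-1}) \in \mS$ (possible since $\mS_{t^{-1}} \neq \emptyset$) gives $(b_i + c, e) = (b_i,t)\cdot(c,t^{-1}) \in \mS_e$, whence $d \mid (b_i + c)$ for $i = 1,2$. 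This makes the map $r \colon T \to \Z/d\Z$ sending $t$ to the residue of $b$ modulo $d$ for any $(b,t) \in \mS$ well defined; multiplicativity of $\mS$ shows that $r$ is a group homomorphism with $r(e) = 0$. Consequently $F \colon \Z \times T \to \Z/d\Z$, $F(a,t) := (a \bmod d) - r(t)$, is a homomorphism into an abelian group that vanishes on all of $\mS$, hence on the subgroup $\Z\mS$. Evaluating $F$ on an element $(a,e) \in \Z\mS$ yields $a \bmod d = 0$, which is exactly the desired inclusion.

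With this in hand the conclusion follows by a direct Frobenius-type argument. For each $t \in T$ fix once and for all an element $(c_t, t) \in \mS$, and put $g(\mS) := g_0 + \max_{t \in T} c_t$. Now let $(a,t) \in \Z\mS$ with $a > g(\mS)$. Since $(c_t,t) \in \mS \subseteq \Z\mS$, the difference $(a - c_t, e) = (a,t)\cdot(c_t,t)^{-1}$ lies in $\Z\mS \cap (\Z \times \{e\})$, so by the key step $d \mid (a - c_t)$; moreover $a - c_t > g(\mS) - c_t \geq g_0$. By the choice of $g_0$ we conclude $(a - c_t, e) \in \mS_e$, and therefore
\[
(a,t) = (c_t, t) \cdot (a - c_t, e) \in \mS,
\]
as required. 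I do not expect any essential difficulty beyond the homomorphism argument; the only points to double-check are that $\mS_{t^{-1}} \neq \emptyset$ for every $t$ (our standing assumption that every fibre is nonempty) and that $F$ is genuinely a homomorphism into the abelian group $\Z/d\Z$, so that no commutativity hypothesis on $T$ is needed.
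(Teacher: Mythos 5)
Your proof is correct and follows essentially the same route as the paper: fix an element $(c_t,t)$ in each nonempty fibre, set $g(\mS) := g_0 + \max_t c_t$, and subtract to land in $\mS_e$ above its conductor. The only difference is that you explicitly verify the key inclusion $\Z\mS \cap (\Z \times \{e\}) \subseteq d\Z \times \{e\}$ via the homomorphism $F$ --- a point the paper's proof uses implicitly when it passes from $(c-m_t,e) \in \Z\mS$ and $c - m_t > g_e$ to $(c-m_t,e) \in \mS$ --- so your write-up actually makes the argument more complete.
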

\begin{proof}We take $G$ the smallest subgroup of $\Z \times \{e\}$ containing $\mS_e$. Since $G \subset \Z \times \{e\}$, it is cyclic with generator $(d, e)$ for some $d \in \N$. If $d = 0$, then $\mS = G = \{0\} \times T$ and the result follows. Assume that $d > 0$, then there exists $g_e \in \N$ such that for all $(a,e) \in G$ with $a > g_e$, we have that $(a,e) \in \mS$. For all $t \in T$ we take $m_t := \min\{a \in \N \, \vert \, (a,t) \in \mS_t\} $. 
We claim that $g(\mS) := g_e  + \max\{m_t\, \vert \, t \in T\}$ satisfies the desired property. Consider $(c,t) \in \Z \mS$ with $c > g(\mS)$. We have that
$(m_t, t) \in \mS$ and then $(c,t) \cdot (m_t,t)^{-1} = (c-m_t, e) \in \Z \mS$. Moreover, since $c-m_t > g_e$, we have that $(c-m_t,e) \in \mS$. Thus, $(c,t) = (m_t,t) \cdot (c-m_t, e) \in \mS$. 
 \end{proof}

Let us illustrate this result with an example.

\begin{exam}
 Let $T = S_{3}$ be the symmetric group on $3$ elements and consider the monoid $\mS = \langle (3, id), (2, (12)), (4,(123)) \rangle \subset \N \times S_3$. Since the set
$\{(12), (123)\}$ generates $S_3$, we have $\mS_t \neq \emptyset$ for all $t \in S_3$. Moreover, the equality 
$(2, (12)) \cdot (2,(12)) \cdot (3, id)^{-1} = (1,id)\in \Z \mS$ yields that $\Z\mS = \Z \times S_3$. Since
$(3,id), (4,id) \in \mS$ there exists $g(\mS) \in \ \Z^+$ such that for all $(a, \sigma) \in \Z \times S_3$ with $a > g(\mS)$, we have $a \in \mS$. More precisely, we have that $(2,(12))$, $(4,(123))$, $(6,(23))$, $(6,(13))$, $(8,(132)) \in \mS$ and that $\mS_{id} = \langle (3,id), (4,id) \rangle$, thus Proposition \ref{frobgeneral} holds with $g(\mS) = 13$.
\end{exam}

We define the {\it set of gaps} of the semigroup $\mS$ as $\mathcal N(\mS) := \Z \mS \cap (\N \times T)\setminus \mS$. Proposition~\ref{frobgeneral} implies that this set is always finite.


To play chomp on $\mS$, we need that $(\mS,\lS)$ has a poset structure with a global minimum. The role of the minimum will be played by $(0,e) \in \mS$, the neutral element. We consider the binary relation $r_1 \leq_\mS r_2 \Leftrightarrow r_2 \cdot r_1^{-1} \in \mS$. A necessary and sufficient condition for $\leq_\mS$ to be an order is that \begin{equation}\label{eq:antisym} (0,t) \in \mS \Longleftrightarrow t = e. \end{equation} Indeed, $\leq_\mS$ is reflexive (because $\mS$ has a neutral element), transitive (because $\mS$ is closed under $\cdot$) and property (\ref{eq:antisym}) implies  antisymmetry.  A monoid satisfying property (\ref{eq:antisym}) will be called an {\it ordered monoid}.

In order to extend Theorem~\ref{simetricoBgana} we also need that the number of maximal elements of the remaining poset is invariant under the choice of the first move. We will see that this is always the case if and only if $T$ is commutative, that is, if $\mS$ is a numerical semigroup with torsion.

\begin{prop}\label{maxinvariantelattice}Let $T$ be a finite group.
The number of maximal elements of ${\rm Ap}(\mS,x)$ is invariant under the choice of $x$ for every ordered monoid $(\mS,\leq_{\mS})$ with $\mS \subset \N \times T$ if and only if $T$ is commutative.
\end{prop}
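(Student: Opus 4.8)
The plan is to prove both directions by analyzing how the number of maximal elements of ${\rm Ap}(\mS,x)$ behaves under changes of the first move $x$. The key structural fact to exploit is that for any $x,y \in \mS$, the translation map $s \mapsto (y \cdot x^{-1}) \cdot s$ is an isomorphism of the up-sets $x + \mS \to y + \mS$ inside the ambient group $\Z \times T$; the subtlety is whether this translation respects the poset order $\leq_{\mS}$ on all of $\mS$, which is where commutativity of $T$ enters.

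For the easy direction, assume $T$ is abelian. Then $\mS$ is a numerical semigroup with torsion and the poset $(\mS,\leq_{\mS})$ is auto-equivalent (as noted in the excerpt, following \cite[Section 5]{cgpr}): the map $\psi_{x,y}(s) := s \cdot (y \cdot x^{-1})$ is an order-isomorphism because for $s_1 \leq_{\mS} s_2$ one has $s_2 \cdot s_1^{-1} \in \mS$, and commutativity gives $\psi_{x,y}(s_2) \cdot \psi_{x,y}(s_1)^{-1} = s_2 \cdot s_1^{-1} \in \mS$, hence $\psi_{x,y}(s_1) \leq_{\mS} \psi_{x,y}(s_2)$. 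First I would verify that $\psi_{x,y}$ restricts to a bijection ${\rm Ap}(\mS,x) \to {\rm Ap}(\mS,y)$ that preserves $\leq_{\mS}$ in both directions, which forces it to carry maximal elements to maximal elements. This gives equality of the numbers of maximal elements for every pair $x,y$, proving invariance. (One should double-check that $\psi_{x,y}$ indeed sends $\mS$ to $\mS$ and ${\rm Ap}(\mS,x)$ into $\mS \setminus (y \cdot \mS)$; both follow from commutativity and the fact that $y \cdot x^{-1}$ is fixed.)

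For the converse, I would argue contrapositively: if $T$ is not commutative, I would exhibit two first moves $x$ and $y$ for which ${\rm Ap}(\mS,x)$ and ${\rm Ap}(\mS,y)$ have different numbers of maximal elements. The natural candidates are $x = (a,e)$ for a suitable $a>0$ and $y = x \cdot g$ for a group element $g$ that fails to commute with something; alternatively, compare the Ap\'ery sets with respect to $(a,e)$ and with respect to an element lying in a noncentral coset. The heuristic is that when $T$ is noncommutative, left-translation and right-translation by $y \cdot x^{-1}$ disagree, so the relation $\leq_{\mS}$ is not preserved under the coordinate shift, and the "staircase'' of maximal elements can genuinely change shape. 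The cleanest route is probably to pick two noncommuting elements $s,t \in T$ and track how the cover relations among the top layer of the finite poset ${\rm Ap}(\mS,x)$ are altered when the group component of the base point is multiplied on one side rather than the other, using Proposition~\ref{frobgeneral} to guarantee that once the $\N$-coordinate is large enough, membership in $\mS$ depends only on the $T$-coordinate lying in the appropriate coset of $\mS_e$.

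The main obstacle I anticipate is the converse direction: producing a concrete noncommutative witness in full generality rather than for a single example. Invariance of the \emph{count} of maximal elements is weaker than order-isomorphism, so one must rule out the possibility that noncommutative translations, while not order-preserving, nonetheless always happen to preserve the number of maximal elements. I expect the right strategy is to reduce to the smallest nonabelian situation — two elements $t_1, t_2$ with $t_1 t_2 \neq t_2 t_1$ — and show that the asymmetry $t_1 \cdot u = t_2$ solvable on one side but with a different solution set on the other (contrast with Proposition~\ref{latinsquare}) forces a mismatch in comparabilities among the top layer. Making this mismatch translate into a genuinely different maximal-element count, uniformly over all such $T$, is the delicate step and will require care in choosing the two base points so that their Ap\'ery sets are directly comparable via Proposition~\ref{frobgeneral}.
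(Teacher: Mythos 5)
There is a genuine gap in both directions. In the ($\Leftarrow$) direction, the step you flag as ``to verify'' --- that $\psi_{x,y}(s)=s\cdot(y\cdot x^{-1})$ restricts to a bijection ${\rm Ap}(\mS,x)\to{\rm Ap}(\mS,y)$ --- is false and cannot be repaired as stated: the two Ap\'ery sets generally have different cardinalities (already for a plain numerical semigroup, $|{\rm Ap}(\mS,a)|=a$, so ${\rm Ap}(\mS,2)$ and ${\rm Ap}(\mS,3)$ in $\langle 2,3\rangle$ cannot be in bijection). Translation by $y\cdot x^{-1}$ carries $x\cdot\mS$ onto $y\cdot\mS$ but does not carry $\mS$ into $\mS$, so it does not compare the two Ap\'ery sets. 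The correct move --- and what the paper does --- is not to compare ${\rm Ap}(\mS,x)$ with ${\rm Ap}(\mS,y)$ at all, but to biject the \emph{maximal} elements of ${\rm Ap}(\mS,x)$ with the maximal gaps of $\mS$ via $n\mapsto x\cdot n$ (the torsion analogue of the pseudo-Frobenius description of maximal Ap\'ery elements); since the set of maximal gaps does not depend on $x$, invariance follows. Commutativity is exactly what makes this single map well defined and order-compatible.

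In the ($\Rightarrow$) direction you have a plausible heuristic but no proof: you never produce the ordered monoid $\mS$ nor the two base points, and you yourself identify the missing step (ruling out that non-order-preserving translations still preserve the \emph{count} of maximal elements) as ``delicate.'' That step is the entire content of this direction, and a heuristic about mismatched cover relations does not discharge it. The paper resolves it by an explicit construction: for noncommuting $s,t$ it takes
$\mS=\{(i,s)\mid i\geq 1\}\cup\{(i,s\cdot t)\mid i\geq 3\}\cup\bigl(\cup_{u\notin\{s,s\cdot t\}}\{(i,u)\mid i\geq 2\}\bigr)\cup\{(0,e)\}$,
checks it is an ordered monoid precisely because $s^2\neq s\cdot t$, and then counts: ${\rm Ap}(\mS,(2,e))$ has at least $|T|-1$ maximal elements while ${\rm Ap}(\mS,(1,s))$ has at most $|T|-2$. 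Note also that your ``natural candidates'' $x=(a,e)$ and $y=x\cdot g$ are not the ones that work here; the asymmetry is extracted by choosing the heights of the fibers $\mS_u$ unevenly so that multiplying by $(1,s)$ shifts cosets in a way that collapses maximal elements. Without some such concrete construction the converse is unproven.
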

\begin{proof}
$(\Leftarrow)$ We assume that $T$ is commutative. To prove the result it suffices to establish a bijection between the set of maximal gaps of $\mS$ with respect to $\leq_\mS$ and the set of maximal elements of ${\rm Ap}(\mS,x)$. In fact, it is straightforward to check that the map sending $\sigma: \mathcal N(\mS) \longrightarrow {\rm Ap}(\mS,x)$ defined as $y \mapsto x \cdot y$ is a bijection.

$(\Rightarrow)$ Suppose that $T$ is not commutative. We are going to exhibit an ordered monoid $(\mS, \leq_{\mS})$ with $\mS \subset \N \times T$ and two elements $x,y \in \mS$ such that the number of maximal elements of ${\rm Ap}(\mS,x)$ and  ${\rm Ap}(\mS,y)$ with respect to $\leq_{\mS}$ is not the same. We take $s,t \in T$ such that $s \cdot t \neq t \cdot s$ and we set 
\[ \mS := \{(i,s) \, \vert \, i \geq 1\} \cup \{(i,s \cdot t) \, \vert \, i \geq 3\} \cup \left( \cup_{u \notin \{s,s \cdot t\}} \{(i,u) \, \vert \, i \geq 2\}\right) \cup \{(0,e)\} \]   

We observe that $\mS$ is a semigroup, the only potential problem would be if $(1,s) \cdot (1,s) = (2, s \cdot t)\notin \mS$, but  this is not possible because $s^2 \neq s\cdot t$.
Now we choose $x := (2,e)$ and $y := (1,s)$. We are going to prove that ${\rm Ap}(\mS,x)$ has more maximal elements than ${\rm Ap}(\mS,y)$. Indeed, this is a consequence of the following facts:
\begin{itemize}
\item [(a)] ${\rm Ap}(\mS,x) =  \{(0,e), (3,e), (1,s), (2,s), (3,s \cdot t), (4,s \cdot t)\}\  \cup \  \{(2,u),(3,u) \, \vert \, u \notin \{s, s \cdot t, e\}\},$ 
\item[(b)] every element of 
\[ \{(4, s \cdot t)\} \cup \{(3,u) \, \vert \, u \notin \{s,t\} \} \subset {\rm Ap}(\mS,x), \]
is a maximal  with respect to $\leq_{\mS}$,
\item[(c)] ${\rm Ap}(\mS,y) =  \{(2,u) \, \vert \, u \notin \{s^2, s \cdot t\}\} \cup \{3, s^2 \cdot t\} \cup \{(0,e)\}, $
and, hence, it has size $|T|$, and
\item[(d)] $(0,e)$ and $(2, s^2 \cdot t \cdot s^{-1})$ belong to ${\rm Ap}(\mS,y)$ and are not maximal in this set with respect to $\leq_\mS$; thus
${\rm Ap}(\mS,y)$ has at most $|T|-2$ maximal elements.
\end{itemize}

If we prove facts (a)-(d), we conclude that ${\rm Ap}(\mS,x)$ has at least $|T| - 1$ maximal elements meanwhile ${\rm Ap}(\mS,y)$ has at most $|T|-2$; and the result follows. Since (a) is easy to check, we start by proving (b). We denote by $M_x$ the set of maximal elements of ${\rm Ap}(\mS,x)$ with respect to $\leq_{\mS}$. We first observe that $(4,s\cdot t) \in M_x$. An element $(3,u) \in {\rm Ap}(\mS,x)$ if and only if $u \neq s$; moreover it belongs to $M_x$ if and only if $(3,u) \not \leq_{\mS} (4,s \cdot t)$ or, equivalently, if $(1, s \cdot t \cdot u^{-1}) \not= (1, s)$;  this happens if and only if $u \neq t$. 

To prove (c) one just needs to verify that $(i,u) \in y \cdot \mS$ for all $u \geq 4$, that $(3,u) \in y \cdot \mS$ if and only if $u \neq s^2 \cdot t$, and that $(2,u) \in y \cdot \mS$ if and only if $u = s^2$.

 Clearly, $(0,e)$ is not maximal in ${\rm Ap}(\mS,y)$. Concerning $(2,s^2 \cdot t \cdot s^{-1})$, we observe that $s^2 \cdot t \cdot s^{-1} \neq s^2$ because $t \neq s$ and that $s^2 \cdot t \cdot s^{-1} \neq s \cdot t$ because $s \cdot t \neq t \cdot s$; hence $(2,s^2 \cdot t \cdot s^{-1}) \in {\rm Ap}(\mS,y)$. Finally, $(2,s^2 \cdot t \cdot s^{-1}) \leq_{\mS} (3, s^2 \cdot t)$ because $(1,s) \in T$. This proves (d) and we are done.
 \end{proof}


In order to generalize Theorem~\ref{simetricoBgana}, we first extend the definition of symmetric semigroup to numerical semigroups with torsion. Let $\mS$ be a numerical semigroup with torsion (since $\mS$ is abelian we will use the additive notation for the operation on $\mS$). We say that $\mS$ is {\it symmetric} if and only if there exists an $x \in \mathcal N(\mS)$ such that: 
$$y \in \mathcal N(\mS) \Longleftrightarrow x - y \in \mS.$$
We observe that the role of $x$ in this definition can only be played for a maximal element in $\mathcal N(\mS)$; indeed, $\mS$ is symmetric if and only if the set of gaps of $\mS$ has a maximum. Following the lines of the proof of Proposition~\ref{maxinvariantelattice}, we have that this is equivalent to ${\rm Ap}(\mS,z)$ having a global maximum with respect to $\leq_{\mS}$ for all $z \in \mS$. Thus, from Propositions~\ref{finiteafteronemove} and~\ref{maxinvariantelattice}, we conclude the following result:

\begin{theorem}\label{simetricoBganalattice} Let $T$ be a finite commutative group and let $(\mS,\leq_{\mS})$ be an ordered numerical semigroup with torsion.
If $\mS$ is symmetric, then player $B$ has a winning strategy for chomp on $\mS$.
\end{theorem}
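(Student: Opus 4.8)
The plan is to reduce Theorem~\ref{simetricoBganalattice} to the same strategy-stealing argument that proved Theorem~\ref{simetricoBgana}, by showing that symmetry of a numerical semigroup with torsion forces every poset remaining after player $A$'s first move to have a global maximum. The whole argument then collapses to Remark~\ref{ganaAmax}: if $\mathrm{Ap}(\mS,x)$ has a global maximum (and is not the trivial one-point poset), then $A$ loses whatever first move $x$ is played, so $B$ wins.

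First I would verify that the hypotheses of Propositions~\ref{finiteafteronemove} and~\ref{maxinvariantelattice} are genuinely in force. Since $T$ is a finite commutative group and $\mS$ is an ordered numerical semigroup with torsion that is infinite, Corollary~\ref{corfingen} gives that $\mS$ is nicely generated, and then Proposition~\ref{finiteafteronemove} guarantees that $\mathrm{Ap}(\mS,x)$ is finite for every $x \in \mS$. Thus, as in the classical case (Remark~\ref{aperychomp}), after $A$'s first move $x$ the remaining poset is exactly the finite set $\mathrm{Ap}(\mS,x)$, and every game is finite so that one of the two players has a winning strategy.

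Next I would exploit the symmetry hypothesis through the bijection already exhibited in the proof of Proposition~\ref{maxinvariantelattice}. Because $T$ is commutative, for each $x \in \mS$ the map $\sigma : \mathcal N(\mS) \longrightarrow \mathrm{Ap}(\mS,x)$ given by $y \mapsto x \cdot y$ is an order-preserving bijection that, crucially, carries maximal elements of $\mathcal N(\mS)$ to maximal elements of $\mathrm{Ap}(\mS,x)$. By the extended definition of symmetric, $\mS$ is symmetric precisely when the set of gaps $\mathcal N(\mS)$ has a maximum element with respect to $\leq_{\mS}$; transporting this maximum through $\sigma$ shows that $\mathrm{Ap}(\mS,x)$ has a global maximum for every choice of $x$. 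This is the step already flagged in the text (``Following the lines of the proof of Proposition~\ref{maxinvariantelattice}''), and it is where the commutativity of $T$ does the essential work, since without it the bijection $\sigma$ need not respect the order relation in both directions.

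Finally, I would combine these observations. Since $\mS \neq \{(0,e)\}$ (it is infinite), for any first move $x$ the poset $\mathrm{Ap}(\mS,x)$ is a finite poset with a global minimum $(0,e)$, a global maximum by the previous paragraph, and more than one element; hence Remark~\ref{ganaAmax} applies to $\mathrm{Ap}(\mS,x)$ and the \emph{second} player to move on it — which is player $B$, since $A$ has just moved — wins. As this holds irrespective of $A$'s opening move, player $B$ has a winning strategy on $(\mS,\leq_{\mS})$. I expect the main obstacle to be checking carefully that $\sigma$ transports \emph{maxima} and not merely maximal elements, i.e. that a global maximum of $\mathcal N(\mS)$ yields a global maximum of $\mathrm{Ap}(\mS,x)$; this is exactly the point where one must invoke commutativity of $T$ to ensure $\sigma$ and its inverse are both order-preserving, so that the full up-set and down-set structure is respected.
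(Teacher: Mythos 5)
Your proposal is correct and follows essentially the same route as the paper: the paper's argument is precisely that symmetry is equivalent to $\mathcal N(\mS)$ having a maximum, that the translation map from the proof of Proposition~\ref{maxinvariantelattice} transports this to a global maximum of ${\rm Ap}(\mS,x)$ for every first move $x$, and that Proposition~\ref{finiteafteronemove} plus the strategy-stealing Remark~\ref{ganaAmax} then hand the win to $B$. The only cosmetic caveat is that $y \mapsto x\cdot y$ need only be invoked as a correspondence between \emph{maximal} gaps and \emph{maximal} elements of ${\rm Ap}(\mS,x)$ (it is not defined on all of $\mathcal N(\mS)$ in general), but this is exactly how the paper itself uses it and does not affect your conclusion.
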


Furthermore, we can extend the proof of Theorem~\ref{boundonstrategy} to prove the following result:

\begin{theorem}\label{boundonstrategylattice}
Let  $\mS \subset \N \times T$ be an ordered numerical semigroup with torsion. 
If $A$ has a winning strategy for chomp on $\mS$, then $A$ has a winning strategy with first move $(a,x)$ with $a \leq 2^{g t 2^{n}}$, where $g = \max \{a \in \Z \, \vert \, (a,x) \in \mathcal N(\mS)\}$, $n = |\mathcal N(\mS)|$ and $t = |T|$.
\end{theorem}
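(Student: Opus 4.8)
The plan is to mirror the proof of Theorem~\ref{boundonstrategy} at the level of the three claims, the only genuinely new work being to rebuild the encoding of down-sets and the two transition lemmas (Lemma~\ref{lemacodif} and Lemma~\ref{pasalomismo}) in the presence of the group coordinate. Throughout I use additive notation and write $s_1 \in \N$ for the first coordinate of $s \in \mS$; by Proposition~\ref{frobgeneral} every element of $\Z\mS$ with first coordinate exceeding $g$ already lies in $\mS$, and every gap has first coordinate $\leq g$. First I would encode a proper down-set $P' \subsetneq \mS$ as a pair $(x,C)$ with $x \in \mS$ and $C \subseteq \mathcal N(\mS)$. Let $a_0 := \min\{s_1 : s \in \mS \setminus P'\}$ be the threshold integer, fix (via a prescribed total order on $T$) the canonical removed element $x = (a_0,u^\ast)$ at this threshold, and set $C := \{s - x : s \in P',\ s_1 \geq a_0\}$. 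As in the numerical case one checks that $\{s \in \mS : s_1 < a_0\} \subseteq P'$ and that every $s \in P'$ with $s_1 \geq a_0$ satisfies $s - x \in \mathcal N(\mS)$ (since $x \notin P'$ and $P'$ is a down-set, $s \not\geq_{\mS} x$, so $s - x \in \Z\mS \cap (\N \times T) \setminus \mS$); hence $P' = \{s \in \mS : s_1 < a_0\} \cup (x + C)$ and $P' \equiv (x,C)$ is well defined. Distinct group elements sharing the threshold integer are absorbed automatically: if $(a_0,v) \in P'$ with $v \neq u^\ast$, then $(a_0,v) - x = (0, v - u^\ast) \in \mathcal N(\mS)$ by the ordered-monoid property~\eqref{eq:antisym}.

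Next I would establish the two transition lemmas. The analogue of Lemma~\ref{lemacodif} describes $P' \setminus (x' + \mS)$ for a move $x' \in P'$: the new threshold is $\min(a_0, x'_1)$; if $x'_1 > a_0$ the threshold is unchanged and the new $D$ satisfies $D \subsetneq C$; and if $g < x'_1 < a_0 - g$ the state resets to $(x', \mathcal N(\mS))$, because then $x' + c$ has first coordinate in $(g,a_0)$ for every gap $c$, so by Proposition~\ref{frobgeneral} it survives and lies below the new threshold. The analogue of Lemma~\ref{pasalomismo} is the crucial translation-invariance statement: if $x,x'$ and $y,y'$ all have first coordinate $> g$, if $y - x = y' - x' = (\lambda,0) \in \Z\mS$, and $P \equiv (x,C)$, $Q \equiv (y,C)$, then $P \setminus (x' + \mS) \equiv (x'',D)$ forces $Q \setminus (y' + \mS) \equiv (x'' + (\lambda,0), D)$ with the same $D$. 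This is where Proposition~\ref{frobgeneral} does the real work: above first coordinate $g$, membership in $\mS$ depends only on membership in the subgroup $\Z\mS$, which is invariant under translation by its own element $(\lambda,0)$, so the local poset structure around $x$ and around $y = x + (\lambda,0)$ is identical.

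Finally I would define, for $x \in \mS$ with $x_1 > g$, the set $W_x \subseteq \mathfrak{P}(\mathcal N(\mS))$ by $C \in W_x$ iff the first player wins chomp on $(x,C)$, record the inductive characterization of $W_x$ and the fact that $x$ is a winning first move iff $\mathcal N(\mS) \notin W_x$, and then run Claims~1--3 of Theorem~\ref{boundonstrategy} essentially unchanged. The only point that changes is the bookkeeping in Claim~2. Since, as in the numerical case, $\mathcal N(\mS) \in W_x$ for every $x$ with $g < x_1 \leq \Delta := 2^{g t 2^n}$, each $W_{(a,u)}$ takes at most $2^{2^n - 1}$ values. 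To each integer $a$ with $g < a \leq \Delta$ I attach the tuple $\mathcal W_a := (W_{(a,u)})_{u \in T}$ together with the pattern recording which $(a,u)$ lie in $\Z\mS$. A window $(\mathcal W_a,\ldots,\mathcal W_{a+g-1})$ of $g$ consecutive integers then takes at most $(2^{2^n-1})^{g t} = 2^{(2^n-1) g t} < \Delta$ values, so two windows coincide; recording the $\Z\mS$-membership pattern forces the two repeated positions to differ by some $(\lambda,0) \in \Z\mS$, so the translation lemma applies. The propagation argument of Claim~3 shows $(\mathcal W_a)$ is eventually periodic, and Claim~1 converts this into the stated bound on the smallest winning first move.

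The hard part will be the translation-invariance lemma together with ensuring that the pigeonhole produces a shift lying in $\Z\mS$. In the numerical case $\gcd = 1$ and every integer shift is admissible, whereas here all elements of $\Z\mS$ have first coordinate divisible by the integer period $d$ of $\Z\mS$, and only shifts $(\lambda,0) \in \Z\mS$ preserve $\mS$ above the Frobenius bound; recording the $\Z\mS$-membership pattern inside $\mathcal W_a$ is precisely what forces the repeated configuration to occur at positions whose difference is such a shift. The remaining verifications---fixing the canonical removed element $x$ at a shared threshold integer and the edge cases of the transition lemma when a move lands exactly at the threshold---are routine, but must be carried out with the group coordinate in mind.
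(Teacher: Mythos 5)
The paper gives no actual proof of this theorem, stating only that it follows by extending the proof of Theorem~\ref{boundonstrategy}; your proposal carries out exactly that extension, and the new ingredients you isolate (the canonical base point among the removed elements at the threshold integer, the restriction of the translation lemma to shifts $(\lambda,0)\in\Z\mS$ above the Frobenius bound, and the pigeonhole over tuples $(W_{(a,u)})_{u\in T}$ together with the $\Z\mS$-membership pattern so that the repeated windows differ by an admissible shift) are precisely the points where the group coordinate intervenes. The approach is correct and essentially the one the paper intends.
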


\section{Concluding remarks}\label{sec:conclude}
We have combined the combinatorial game of chomp with the more algebraically structured posets coming from numerical semigroups. Some concepts in semigroups found a corresponding interpretation as strategies in chomp. While we have found strategies on several classes of semigroups, it is clear, that the problem in general is quite complicated and for many (seemingly restricted) classes, we could not provide a complete characterization for when $A$ wins. We believe that the possibly easiest next case to attack would be to get more results on semigroups, that are generated by an interval. As mentioned at the end of Section~\ref{sec:intervals} a first task could be to completely characterize the  generated semigroups of type $2$ and with $k$ odd, where $A$ has a winning strategy. Another problem pointed out at the end of Section~\ref{sec:intervals} can be formulated as follows:

\begin{conjecture}\label{conjecture}
 For every $c$ there exists an $a_c$, such that for all $a\geq a_c$ player $A$ has a winning strategy for chomp on $\mS = \langle a, a+1,\ldots, 2a-c\rangle$ if and only if $a$ and $c$ are odd. 
\end{conjecture}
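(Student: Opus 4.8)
The forward implication is immediate and in fact holds for every $a > c$: writing the interval as a generalized arithmetic sequence with $h = 1$, $d = 1$ and $k = a - c$, the hypothesis that $a$ and $c$ are both odd is exactly the statement that $a$ is odd and $k = a - c$ is even, so Proposition~\ref{juegaaaritm} guarantees that the smallest generator $a$ is already a winning first move for $A$. Hence the entire content of the conjecture is the reverse implication, which by contraposition splits into the three parity cases $a$ even with $c$ odd, $a$ even with $c$ even, and $a$ odd with $c$ even; in each of these one must show that for all sufficiently large $a$ player $B$ wins. Since chomp on $\mS$ is finite after $A$'s first move, $A$ has a winning strategy if and only if some first move $s$ leaves ${\rm Ap}(\mS,s)$ in a position lost by the player to move (who is $B$). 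Thus it suffices to exhibit, uniformly in $a$, a winning strategy for $B$ as the \emph{second} player on the infinite poset $(\mS,\lS)$.

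The plan is to generalize the block--pairing strategy used for $c = 3$ in Proposition~\ref{tipo2consec}. First I would record the (straightforward) structure of $\mS$ for $a$ large: one has $\mS = \{0\} \cup [a, 2a - c] \cup [2a, \infty)$, so that the only gaps above $a$ are $2a - c + 1, \ldots, 2a - 1$; consequently $g(\mS) = 2a - 1$ and $n(\mS) = a + c - 2$. I would then partition $\mS$ into consecutive blocks $\mS_i$ of a fixed period $\Theta(a)$ (the period was $2a + 1$ in Proposition~\ref{tipo2consec}), chosen so that, for $a$ above a threshold depending only on $c$, the structure induced on each block $\mS_i$ with $i \geq 1$, together with its relations to the block below, is isomorphic to a single ``universal'' finite shape $\Pi_c$ depending only on $c$, while $\mS_0$ is the same shape with the $c-1$ bottom gaps deleted. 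The goal is a strategy in which $B$ always answers $A$'s move inside the same block, forcing $A$ to be the first to descend into each lower block and finally to pick $0 \in \mS$.

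The parity hypotheses would enter exactly as in the earlier proofs: they arrange that the relevant antichains and level sets of $\Pi_c$ have even cardinality, so that an involution $\varphi$ pairs up all but a controlled set of elements inside each block. Concretely I would try to build $\varphi$ so that, on each finite poset ${\rm Ap}(\mS,s)$ arising after $A$'s first move $s$, Lemma~\ref{downsetstrat} applies with an $a$-independent down-set $F$ of fixed points, reducing the game to the second player's winning on a fixed finite poset. That finite poset would be decided once and for all for each residue of $c$, exactly as the ten-element poset was handled in Lemma~\ref{poset10elem} and the involutions were constructed in Propositions~\ref{juegaaaritm} and~\ref{juegaa+1aritm}. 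When $s$ is not the multiplicity, ${\rm Ap}(\mS,s)$ is a truncation of a few consecutive blocks, and the same block involution, restricted and locally corrected near $s$, should still pair off everything lying above the bottom block.

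The main obstacle is precisely the interaction between blocks and the boundary bookkeeping that made the case $c = 3$ delicate, namely condition (c) in the proof of Proposition~\ref{tipo2consec}: after clearing the top of a block, $B$ must avoid leaving a configuration from which the single ``dangerous'' element of the block below becomes a winning move for $A$. For $c = 3$ there was one forbidden pattern, and controlling it already forced the threshold $a \geq 8$; for general $c$ there are $c - 1$ gaps per bottom block and a correspondingly growing family of dangerous cross-block configurations, so it is not at all clear that a single involution closes up without a genuinely new idea, nor is the threshold $a_c$ easy to pin down. I would expect the crux of the work, and the reason this remains a conjecture, to be proving both that the within-block shape $\Pi_c$ has truly stabilized for $a \geq a_c$ and that $B$'s pairing can be maintained across all block boundaries simultaneously against every possible first move of $A$. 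The symmetric sub-case $c \equiv 2 \pmod{a-c}$ (which for large $a$ means only $c = 2$) is the one place where Theorem~\ref{simetricoBgana} already yields the conclusion for free, and can serve as a consistency check on the construction.
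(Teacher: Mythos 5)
You should first be aware that this statement is presented in the paper as an open conjecture: the paper offers no proof, only the observations (end of Section~\ref{sec:conclude}) that the ``if'' direction follows from Proposition~\ref{juegaaaritm} and that the full equivalence is known for $1\leq c\leq 3$ via Theorem~\ref{medchomp}, Theorem~\ref{simetricoBgana} and Proposition~\ref{tipo2consec}. Your treatment of the ``if'' direction is correct and is exactly the paper's: with $k=a-c$ the condition ``$a$ odd and $k$ even'' is equivalent to ``$a$ and $c$ odd,'' so $a$ is a winning first move whenever $a>c$. Your description of $\mS$ for large $a$ (gaps above $a$ being $2a-c+1,\ldots,2a-1$, hence $g(\mS)=2a-1$ and $n(\mS)=a+c-2$) is also accurate.

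The gap is the entire ``only if'' direction, and your proposal does not close it; it is a programme whose key objects are never constructed. Concretely: (i) the ``universal'' block shape $\Pi_c$, the period of the block decomposition, and the involution $\varphi$ are not defined, so there is nothing to which Lemma~\ref{downsetstrat} could be applied, and it is not checked that the putative fixed-point set is a down-set after an arbitrary first move $s$; (ii) the paper's own argument for $c=3$ is \emph{not} a pairing argument via Lemma~\ref{downsetstrat} but an adaptive case analysis (cases (1)--(6.2)) in which $B$ must sometimes break any fixed pairing to satisfy condition (c), i.e.\ to remove the dangerous element $(i,a-3)$ before descending a block; for general $c$ there are $c-1$ gaps per block and a growing family of such dangerous cross-block configurations, and no argument is given that they can all be neutralized simultaneously; (iii) the threshold phenomenon is genuinely delicate --- already for $c=3$ the ``only if'' direction fails at $a=6$, where $A$ wins with first move $36$ found only by computer search --- so the existence of a uniform $a_c$ cannot be waved through by saying the block shape ``stabilizes.'' You correctly identify all of this yourself as ``the crux of the work,'' which is an accurate self-assessment: what you have is a plausible strategy outline consistent with the paper's methods for $c\leq 3$, not a proof.
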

The results of Sections~\ref{sec:max} and~\ref{sec:intervals} show that the statement of the conjecture is true for $1\leq c\leq 3$, with the $a_1, a_2, a_3$ being $2,3,7$, respectively. Moreover, the results from Section~\ref{sec:arithmetic} show that if $a$ and $c$ are odd, then $A$ has a winning strategy, so indeed only one direction remains open. Similarly, one can wonder whether for every $k$ there exists an $a_k$, such that for all $a\geq a_k$ the winner of chomp on $\mS = \langle a, a+1,\ldots, a+k\rangle$ is determined by the parities of $a$ and $k$. We know this to be true for $1\leq k\leq 2$ by the results from Section~\ref{sec:intervals}.

%
%

We have shown, that the outcome of chomp on a given $\mS$ is decidable, by giving a huge upper bound for the smallest winning first move of $A$. Can this bound be improved? What is the computational complexity of deciding the outcome of $\mS$?

In the last section, we studied under which circumstances our results on numerical semigroups can be generalized in a straight-forward way to other posets coming from infinite monoids. It would be interesting to further broaden this class of posets, to those defined in terms of wider classes of monoids or satisfying some local symmetry conditions, such as the auto-equivalent posets defined in~\cite{cgpr}. This will be the subject of a forthcoming paper~\cite{averpacuando}.

\subsubsection*{Acknowledgements} 
K.K. has been supported by ANR project ANR-16-CE40-0009-01: Graphes, Algorithmes et TOpologie.   I. G. has been supported by Ministerio de
Econom\'ia y Competitividad,  Spain (MTM2016-78881-P).  

The authors want to thank the anonymous referees for their comments and suggestions that helped to improve this manuscript.

\bibliographystyle{plain}
\bibliography{chomp-bib}
\end{document}